\documentclass{amsart}
\usepackage{latexsym,amsfonts,amsmath,amssymb}
\usepackage{diagrams}
\usepackage{verbatim}
\diagramstyle[tight,centredisplay%
]%
\begin{document}
\def\<#1>{\langle#1\rangle}
\newcommand{\ZFC}{{\rm ZFC}}
\newcommand{\GBC}{{\rm GBC}}
\newcommand{\power}{\mathcal P}
\newcommand{\restrict}{\upharpoonright}
\newcommand{\B}{{\mathbb B}}
\renewcommand{\P}{{\mathbb P}}
\newcommand{\Q}{{\mathbb Q}}
\newcommand{\R}{{\mathbb R}}
\newcommand{\Vbar}{{\overline{V}}}
\newcommand{\Qdot}{{\dot\Q}}
\newcommand{\Rdot}{{\dot\R}}
\newcommand{\Ptail}{{\dot{\P}_\tail}}
\newcommand{\Gtail}{{G_\tail}}
\newcommand{\smalllt}{\mathrel{\mathchoice{\raise2pt\hbox{$\scriptstyle<$}}{\raise1pt\hbox{$\scriptstyle<$}}{\raise0pt\hbox{$\scriptscriptstyle<$}}{\scriptscriptstyle<}}}
\newcommand{\smallleq}{\mathrel{\mathchoice{\raise2pt\hbox{$\scriptstyle\leq$}}{\raise1pt\hbox{$\scriptstyle\leq$}}{\raise1pt\hbox{$\scriptscriptstyle\leq$}}{\scriptscriptstyle\leq}}}
\newcommand{\ltkappa}{{{\smalllt}\kappa}}
\newcommand{\leqkappa}{{{\smallleq}\kappa}}
\newcommand{\leqgamma}{{{\smallleq}\gamma}}
\newcommand{\ltgamma}{{{\smalllt}\gamma}}
\newcommand{\leqtheta}{{{\smallleq}\theta}}
\newcommand{\lttheta}{{{\smalllt}\theta}}
\newcommand{\leqdelta}{{{\smallleq}\delta}}
\newcommand{\ltdelta}{{{\smalllt}\delta}}
\newcommand{\plus}{{+}}
\newcommand{\plusplus}{{{+}{+}}}
\newcommand{\plusplusplus}{{{+}{+}{+}}}
\newcommand{\satisfies}{\models}
\newcommand{\forces}{\Vdash}
\newcommand{\CH}{{\rm CH}}
\newcommand{\LC}{{\rm LC}}
\newcommand{\GCH}{{\rm GCH}}
\newcommand{\SCH}{{\rm SCH}}
\newcommand{\PFA}{{\rm PFA}}
\newcommand{\ORD}{\mathop{{\rm ORD}}}
\newcommand{\Ord}{\mathop{{\rm Ord}}}
\newcommand{\HOD}{\mathop{{\rm HOD}}}
\newcommand{\CARD}{\mathop{{\rm CARD}}}
\newcommand{\df}{\it}
\newcommand{\of}{\subseteq}
\newcommand{\boolval}[1]{\mathopen{\lbrack\!\lbrack}\,#1\,\mathclose{\rbrack\!\rbrack}}
\newcommand{\ran}{\mathop{\rm ran}}
\newcommand{\Ult}{\mathop{\rm Ult}}
\newcommand{\val}{\mathop{\rm val}\nolimits}
\newcommand{\one}{\mathop{1\hskip-2.5pt {\rm l}}}
\newcommand{\image}{\mathbin{\hbox{\tt\char'42}}}
\newcommand{\st}{\mid}
\newcommand{\set}[1]{\{\,{#1}\,\}}
\newcommand{\cp}{\mathop{\rm cp}}
\newcommand{\union}{\cup}
\newcommand{\squnion}{\sqcup}
\newcommand{\Union}{\bigcup}
\newcommand{\intersect}{\cap}
\newcommand{\Intersect}{\bigcap}
\newcommand{\elesub}{\prec}
\renewcommand{\th}{{\hbox{\scriptsize th}}}
\newcommand{\her}[1]{H_{{#1}^+}}
\newcommand{\from}{\mathbin{\vbox{\baselineskip=2pt\lineskiplimit=0pt
                         \hbox{.}\hbox{.}\hbox{.}}}}
\newcommand{\Add}{\mathop{\rm Add}}
\newcommand{\tail}{\text{tail}}
\newtheorem{theorem}{Theorem}
\newtheorem{lemma}[theorem]{Lemma}
\newtheorem{corollary}[theorem]{Corollary}
\theoremstyle{definition}
\newtheorem{definition}[theorem]{Definition}
%
\author{Arthur W. Apter}
\address{A. W. Apter, Mathematics, The Graduate Center of The City University of New York, 365 Fifth Avenue, New York, NY 10016
\& Department of Mathematics, Baruch College of CUNY,
One Bernard Baruch Way, New York, NY 10010}
\email{awapter@alum.mit.edu,
http://faculty.baruch.cuny.edu/aapter}
\author{Victoria Gitman}
\address{V. Gitman, Mathematics, New York City College of Technology, 300 Jay Street, Brooklyn, NY 11201}
\email{vgitman@nylogic.org,
http://websupport1.citytech.cuny.edu/faculty/vgitman}
\author{Joel David Hamkins}
\address{J. D. Hamkins, Department of Philosophy, New York University, 5 Washington Place,
New York, NY 10003, \& Mathematics, The Graduate Center of
The City University of New York, 365 Fifth Avenue, New
York, NY 10016
 \& Mathematics, The College of Staten Island of CUNY, Staten Island, NY 10314}
\email{jhamkins@gc.cuny.edu, http://jdh.hamkins.org}
\thanks{The research of each of the authors has been supported
in part by research grants from the CUNY Research
Foundation. The third author's research has been
additionally supported by research grants from the National
Science Foundation and from the Simons
Foundation.}\subjclass[2000]{03E45, 03E55,
03E40}\keywords{Forcing, large cardinals, inner models}
\date{June 25, 2010 (revised October 31, 2011)}

\begin{abstract}
We construct a variety of inner models exhibiting features
usually obtained by forcing over universes with large
cardinals. For example, if there is a supercompact
cardinal, then there is an inner model with a Laver
indestructible supercompact cardinal. If there is a
supercompact cardinal, then there is an inner model with a
supercompact cardinal $\kappa$ for which
$2^\kappa=\kappa^\plus$, another for which
$2^\kappa=\kappa^\plusplus$ and another in which the least
strongly compact cardinal is supercompact. If there is a
strongly compact cardinal, then there is an inner model
with a strongly compact cardinal, for which the measurable
cardinals are bounded below it and another inner model $W$
with a strongly compact cardinal $\kappa$, such that
$H_{\kappa^\plus}^V\of\HOD^W$. Similar facts hold for
supercompact, measurable and strongly Ramsey cardinals. If
a cardinal is supercompact up to a weakly iterable
cardinal, then there is an inner model of the Proper
Forcing Axiom and another inner model with a supercompact
cardinal in which $\GCH+V=\HOD$ holds. Under the same
hypothesis, there is an inner model with level by level
equivalence between strong compactness and
supercompactness,
and indeed, another in which there is level by level
inequivalence between strong compactness and
supercompactness. If a cardinal is strongly compact up to a
weakly iterable cardinal, then there is an inner model in
which the least measurable cardinal is strongly compact. If
there is a weakly iterable limit $\delta$ of
$\ltdelta$-supercompact cardinals, then there is an inner
model with a proper class of Laver-indestructible
supercompact cardinals. We describe three general proof
methods, which can be used to prove many similar results.
\end{abstract}

\title[Inner models with large cardinal properties]{Inner
models with large cardinal features usually obtained by
forcing} \maketitle

\section{Introduction}

The theme of this article is to investigate the extent to
which several set-theoretic properties obtainable by
forcing over universes with large cardinals must also
already be found in an inner model. We find this
interesting in the case of supercompact and other large
cardinals that seem to be beyond the current reach of the
fine-structural inner model program. For example, one
reason we know that the \GCH\ is relatively consistent with
many large cardinals, especially the smaller large
cardinals, is that the fine-structural inner models that
have been constructed for these large cardinals satisfy the
\GCH; another reason is that the canonical forcing of the
\GCH\ preserves all the standard large cardinals. In the
case of supercompact and other very large large cardinals,
we currently lack such fine-structural inner models and
therefore have relied on the forcing argument alone when
showing relative consistency with the \GCH. It seems quite
natural to inquire, without insisting on fine structure,
whether these cardinals nevertheless have an inner model
with the \GCH.

\newtheorem{testquestion}[theorem]{Test Question}
\begin{testquestion}\label{TestQuestion.Supercompact+GCH}
If there is a supercompact cardinal, then must there be an
inner model with a supercompact cardinal in which the \GCH\
also holds? 
\end{testquestion}

\begin{testquestion}\label{TestQuestion.Supercompact+2^kappa=kappa+}
If there is a supercompact cardinal, then must there be an
inner model with a supercompact cardinal $\kappa$ such that
$2^\kappa=\kappa^\plus$? 
\end{testquestion}

\begin{testquestion}\label{TestQuestion.Supercompact+2^kappa>kappa+}
If there is a supercompact cardinal, then must there be an
inner model with a supercompact cardinal $\kappa$ such that
$2^\kappa>\kappa^\plus$? 
\end{testquestion}

These questions are addressed by our Theorems
\ref{Theorem.InnerModelSC+2^kappa=kappa+} and
\ref{Theorem.InnerModelSC+GCH}. We regard these
test questions and the others we are about to introduce as
stand-ins for their numerous variations, asking of a
particular set-theoretic assertion known to be forceable
over a universe with large cardinals, whether it must hold
already in an inner model whenever such large cardinals
exist. The questions therefore concern what we describe as
the internal consistency strength of the relevant
assertions, a concept we presently explain. Following ideas
of Sy Friedman
\cite{Friedman2006:InternalConsistencyAndIMH}, let us say
that an assertion $\varphi$ is {\df internally consistent}
if it holds in an inner model, that is, if there is a
transitive class model of \ZFC, containing all the
ordinals, in which $\varphi$ is true. In this general form,
an assertion of internal consistency is a second-order
assertion, expressible in \GBC\ set theory
(as are our test questions); nevertheless,
it turns out that many
interesting affirmative instances of internal consistency
are expressible in the first-order language of set theory,
when the relevant inner model is a definable class, and as
a result much of the analysis of internal consistency can
be carried out in first-order \ZFC. One may measure what we
refer to as the {\df internal consistency strength} of an
assertion $\varphi$ by the hypothesis necessary to prove
that $\varphi$ holds in an inner model. Specifically, we
say that the internal consistency strength of $\varphi$ is
bounded above by a large cardinal or other hypothesis
$\psi$, if we can prove from $\ZFC+\psi$ that there is an
inner model of $\varphi$; in other words, if we can argue
from the truth of $\psi$ to the existence of an inner model
of $\varphi$. Two statements are {\df
internally-equiconsistent} if each of them proves the
existence of an inner model of the other. It follows that
the internal consistency strength of an assertion is at
least as great as the ordinary consistency strength of that
assertion, and the interesting phenomenon here is that
internal consistency strength can sometimes exceed ordinary
consistency strength. For example, although the hypothesis
$\varphi$ asserting ``there is a measurable cardinal and
\CH\ fails'' is equiconsistent with a measurable cardinal,
because it is easily forced over any model with a
measurable cardinal, nevertheless the internal consistency
strength of $\varphi$, assuming consistency, is strictly
larger than a measurable cardinal, because there are models
having a measurable cardinal in which there is no inner
model satisfying $\varphi$. For example, in the canonical
model $L[\mu]$ for a single measurable cardinal, every
inner model with a measurable cardinal contains an iterate
of $L[\mu]$ and therefore agrees that \CH\ holds. So one
needs more than just a measurable cardinal in order to
ensure that there is an inner model with a measurable
cardinal in which \CH\ fails.

With this sense of internal consistency strength, the
reader may observe that our test questions exactly inquire
about the internal consistency strength of their
conclusions. For instance, Test Questions
\ref{TestQuestion.Supercompact+GCH},
\ref{TestQuestion.Supercompact+2^kappa=kappa+} and
\ref{TestQuestion.Supercompact+2^kappa>kappa+} inquire
whether the internal consistency strength of a supercompact
cardinal plus the corresponding amount of the \GCH\ or its
negation is bounded above by and hence
internally-equiconsistent with the existence of a
supercompact cardinal.

In several of our answers, the inner models we provide will
also exhibit additional nice features; for example, in some
cases we shall produce for every cardinal $\theta$ an inner
model $W$ satisfying the desired assertion, but also having
$W^\theta\of W$. These answers therefore provide an
especially strong form of internal consistency, and it
would be interesting to investigate the extent to which the
strong internal consistency strength of an assertion can
exceed its internal consistency strength, which as we have
mentioned is already known sometimes to exceed its ordinary
consistency strength.

Let us continue with a few more test questions that we
shall use to frame our later discussion. Forcing, of
course, can also achieve large cardinal properties that we
do not expect to hold in the fine-structural inner models.
For example, Laver \cite{Laver78} famously proved that
after his forcing preparation, any supercompact cardinal
$\kappa$ is made {\df \emph{(}Laver\emph{)}
indestructible}, meaning that it remains supercompact after
any further $\ltkappa$-directed closed forcing. In
contrast, large cardinals are typically destructible over
their fine-structural inner models (for example, see
\cite[Theorem 1.1]{Hamkins94:FragileMeasurability}, and
observe that the argument generalizes to many of the other
fine-structural inner models; the crucial property needed
is that the embeddings of the forcing extensions of the
fine-structural model should lift ground model embeddings).
Nevertheless, giving up the fine-structure, we may still
ask for indestructibility in an inner model.

\begin{testquestion}\label{TestQuestion.IndestructibleSC}
If there is a supercompact cardinal, then must there be an
inner model with an indestructible supercompact cardinal?
\end{testquestion}

We answer this question in Theorem
\ref{Theorem.InnerModelIndesctructibleSC}. For another
example, recall that Baumgartner \cite{baumgartner:pfa}
proved that if $\kappa$ is a supercompact cardinal, then
there is a forcing extension satisfying the Proper Forcing Axiom
(\PFA). We inquire whether there must in fact be an inner
model satisfying the \PFA:

\begin{testquestion}\label{TestQuestion.PFA}
If there is a supercompact cardinal, then must there be an
inner model satisfying the Proper Forcing Axiom?
\end{testquestion}

This question is addressed by Theorem
\ref{Theorem.InnerModelPFA}, using a stronger hypothesis.
Next, we inquire the extent to which there must be inner
models $W$ having a very rich $\HOD^W$. With class forcing,
one can easily force $V=\HOD$ while preserving all of the
most well-known large cardinal notions, and of course, one
finds $V=\HOD$ in the canonical inner models of large
cardinals. Must there also be such inner models for the
very large large cardinals?

\begin{testquestion}\label{TestQuestion.SC+V=HOD}
If there is a supercompact cardinal, then must there be an
inner model with a supercompact cardinal satisfying
$V=\HOD$?
\end{testquestion}
Since one may easily force to make any particular set $A$
definable in a forcing extension by forcing that preserves
all the usual large cardinals, another version of this
question inquires:

\begin{testquestion}\label{TestQuestion.SC+HOD^W}
If there is a supercompact cardinal, then for every set
$A$, must there be an inner model $W$ with a supercompact
cardinal such that $A\in\HOD^W$?
\end{testquestion}

These questions are addressed by our Theorems
\ref{Theorem.HOD^W} and \ref{Theorem.InnerModelSC+GCH}. One
may similarly inquire, if there is a measurable cardinal,
then does every set $A$ have an inner model with a
measurable cardinal in which $A\in\HOD^W$? What of other
large cardinal notions? What if one restricts to $A\in
H_{\kappa^\plus}$? There is an enormous family of such
questions surrounding the $\HOD$s of inner models.
Furthermore, apart from large cardinals, for which sets $A$
is there an inner model $W$ with $A\in\HOD^W$? There are
numerous variants of this question.

More generally, whenever a feature is provably forceable in
the presence of a certain large cardinal, then we ask: is
there already an inner model with that feature? How robust
can these inner models be?

Before continuing, we fix some terminology. Suppose
$\kappa$ is a regular cardinal. A forcing notion is {\df
$\ltkappa$-directed closed} when any directed subset of it
of size less than $\kappa$ has a lower bound. (This is what
Laver in \cite{Laver78} refers to as {\df $\kappa$-directed
closed}.) A forcing notion is {\df $\leqkappa$-closed} if
any decreasing chain of length less than or equal to
$\kappa$ has a lower bound. A forcing notion is {\df
$\leqkappa$-strategically closed} if in the game of length
$\kappa + 1$ in which two players alternately select
conditions from it to construct a descending $(\kappa +
1)$-sequence, with the second player playing at limit
stages, the second player has a strategy that allows her
always to continue playing. A forcing notion is {\df
${<}\kappa$-strategically closed} if in the game of length
$\kappa$ in which two players alternately select conditions
from it to construct a descending $\kappa$-sequence, with
the second player playing at limit stages, the second
player has a strategy that allows her always to continue
playing. If a poset $\P$ is $\leqkappa$-closed, then it is
also $\leqkappa$-strategically closed. If $\lambda$ is an
ordinal, then ${\rm Add}(\kappa, \lambda)$ is the standard
poset for adding $\lambda$ many Cohen subsets to $\kappa$.
A Boolean algebra $\B$ is {\df $(\lambda, 2)$-distributive}
if the distributive law:
$\bigwedge_{\alpha<\lambda}u_{\alpha,0}\vee
u_{\alpha,1}=\bigvee_{f\in
2^\lambda}\bigwedge_{\alpha<\lambda} u_{\alpha,f(\alpha)}$
holds. Equivalently, a Boolean algebra is
$(\lambda,2)$-distributive if every $f : \lambda \to 2$ in
the generic extension by $\B$ is in the ground model. The
theory $\ZFC^-$ consists of the standard \ZFC\ axioms
without the powerset axiom and with the replacement scheme
replaced by the collection scheme (see
\cite{zfcminus:gitmanhamkinsjohnstone} for the significance
of choosing collection over replacement). A transitive set
$M \models {\rm ZFC}^-$ is a {\df $\kappa$-model} if $|M| =
\kappa$, $\kappa \in M$ and $M^{< \kappa} \subseteq M$. An
elementary embedding $j:M\to N$ is said to {\it lift} to
another elementary embedding $j^*:M^*\to N^*$, where $M\of
M^*$ and $N\of N^*$, if the two embeddings agree on the
smaller domain, i.e. $j^*\restrict M = j$. An elementary
embedding $j : M \to N$ having critical point $\kappa$ is
{\df $\kappa$-powerset preserving} if $M$ and $N$ have the
same subsets of $\kappa$. A cardinal $\kappa$ is {\df
strongly Ramsey} if every $A \subseteq \kappa$ is contained
in a $\kappa$-model $M$ for which there exists a
$\kappa$-powerset preserving elementary embedding $j : M
\to N$.

\section{Three Proof Methods}\label{sec:threeproofs}

In order best to introduce our methods, which we view as
the main contribution of this article, we shall begin with
Test Question \ref{TestQuestion.IndestructibleSC}, which is
answered by Theorem
\ref{Theorem.InnerModelIndesctructibleSC} below. We shall
give three different arguments with this conclusion, using
different proof methods (our third method will prove a
slightly weaker result, because it requires a slightly
stronger hypothesis). These methods are robust enough
directly to answer many variants of the test questions. In
Sections \ref{Section.ThirdMethod} and
\ref{Section.FurtherApplications}, we describe how some
further modifications of the methods enable them to prove
additional related results.

\begin{theorem}\label{Theorem.InnerModelIndesctructibleSC}
If there is a supercompact cardinal, then there is an inner
model with an indestructible supercompact cardinal.
\end{theorem}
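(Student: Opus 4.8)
The plan is to start from a supercompact cardinal $\kappa$ in $V$ and produce an inner model in which some cardinal is Laver indestructibly supercompact. The naive idea would be to run Laver's preparation forcing and pass to the extension, but that produces an outer model, not an inner model, so the whole challenge is to realize the effect of Laver's forcing \emph{inside} $V$. The key observation I would exploit is that a supercompactness embedding $j:V\to M$ with critical point $\kappa$ and $M^\kappa\of M$ already contains, on the $M$-side, a great deal of structure above $\kappa$; in particular, $j(\kappa)$ is inaccessible in $M$ and $M$ is closed enough that Laver-type preparations definable from $j$ live inside $M$. So rather than forcing, I would look for the desired inner model as a suitable ground or as a class definable from the large-cardinal embedding itself.

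Concretely, first I would fix a Laver function $\ell:\kappa\to V_\kappa$ for $\kappa$, so that for every set $x$ and every $\lambda$ there is a supercompactness embedding $j$ with $j(\ell)(\kappa)=x$. Next I would form the Laver preparation $\P$, the $\ltkappa$-support Easton-style iteration that at each appropriate stage $\gamma$ forces with $\ell(\gamma)$ when it is a $\ltgamma$-directed closed poset. The central point is to locate a generic filter for $\P$ \emph{inside $V$} rather than in a forcing extension. Here I would use the embedding: applying $j$ to the iteration, $j(\P)$ factors as $\P*\Ptail$, where $\Ptail$ is the tail of the iteration in $M$ from stage $\kappa$ up to $j(\kappa)$. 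Because $M$ is highly closed and $j(\P)$ is sufficiently directed closed past $\kappa$, I can build, working in $V$ by a diagonalization of length $j(\kappa)$ using the closure $M^\kappa\of M$ together with the small size of the relevant antichains, a single $M$-generic filter $\Gtail$ for $\Ptail$; the master condition argument then lifts $j$ to $j^*$ defined on the $\P$-extension.

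The payoff is that, having built $\Gtail\in V$ meeting all dense subsets of $\Ptail$ lying in $M$, the model $M[\Gtail]$ (equivalently the corresponding definable class) is an inner model of $V$ in which $j(\kappa)$, or rather the image structure, witnesses indestructible supercompactness: the lifted embedding $j^*$ witnesses supercompactness of $\kappa$ in the inner model, and since we forced with the Laver preparation inside that model, $\kappa$ is Laver indestructible there by Laver's original argument carried out internally. Thus the inner model we seek is not a forcing extension of $V$ but a class of the form $M[\Gtail]$ constructed from the supercompactness embedding. The main obstacle, and the technical heart of the argument, will be the construction of the $M$-generic filter $\Gtail$ as an element of $V$: one must verify that $M$ sees only $\kappa^+$-many (or suitably few) dense subsets of the tail forcing to be diagonalized, that the tail is closed enough for the descending sequence of conditions to have lower bounds at limits, and that the resulting filter genuinely meets every dense set of $M$, so that $M[\Gtail]$ is a legitimate inner model satisfying $\ZFC$ in which the lift and hence indestructible supercompactness survive.
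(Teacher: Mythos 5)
Your core construction---building inside $V$ an $M$-generic filter for the tail of $j(\P)$ and taking the resulting extension of $M$ as the inner model---is essentially the paper's direct (second) proof. But the lifting framework you wrap around it is incoherent, and it hides the actual mechanism. You propose to "lift $j$ to $j^*$ defined on the $\P$-extension," yet no $V$-generic filter for $\P$ can exist inside $V$: the Laver preparation does nontrivial forcing (e.g.\ adding Cohen subsets) at unboundedly many stages below $\kappa$, so a filter meeting all dense subsets of $\P$ that lie in $V$ cannot be an element of $V$. Hence there is no $\P$-extension of $V$ available and no master-condition lift to perform. Fortunately none is needed: the correct argument forces only over $M$. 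The cardinal that becomes indestructibly supercompact in $M[\Gtail]$ is $j(\kappa)$, not $\kappa$, and its indestructibility comes purely from elementarity---$M$ thinks $j(\kappa)$ is supercompact, the tail is (a delayed start of) the Laver preparation of $j(\kappa)$ as computed in $M$, and a Laver function restricted to a final segment is still a Laver function, so Laver's theorem applied inside $M$ yields indestructibility of $j(\kappa)$ in $M[\Gtail]$. The paper is explicit that this method produces $j(\kappa)$ as the witness; arranging that $\kappa$ itself is indestructibly supercompact in an inner model requires their third method and a strictly stronger hypothesis (supercompactness up to a weakly iterable cardinal), so your final claim about $\kappa$ overshoots what this construction can deliver.

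The second genuine gap is the diagonalization itself, which you dispatch with "the closure $M^\kappa\of M$ together with the small size of the relevant antichains," proposing a diagonalization "of length $j(\kappa)$." Neither half is justified. A diagonalization carried out in $V$ can be continued only so long as the initial segments of the descending sequence of conditions lie in $M$, since the forcing is closed \emph{in $M$}, not in $V$; with $M^\kappa\of M$ this caps the construction at length $\kappa^+$ (and a length-$j(\kappa)$ diagonalization is hopeless, as no embedding provides anything like that much closure). Meanwhile the number of dense subsets of the tail forcing that lie in $M$, counted in $V$, is roughly $|j(2^\kappa)|\geq 2^\kappa$, which exceeds $\kappa^+$ whenever the CH fails at $\kappa$. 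So without further input the dense sets outnumber the stages you can run. The paper closes exactly this gap using Solovay's theorem that the SCH holds above a supercompact cardinal: choose $\theta>\kappa$ a singular strong limit cardinal of cofinality at least $\kappa$, so that $2^{\theta^{\ltkappa}}=\theta^+$; take $j$ to be a $\theta$-supercompactness embedding, giving $M^\theta\of M$; delay the preparation past $\theta$ so that it is $\leq\theta$-closed; and then compute $|j(2^\kappa)|\leq(2^\kappa)^{\theta^{\ltkappa}}\leq 2^{\theta^{\ltkappa}}=\theta^+$, so that a $\theta^+$-length diagonalization suffices to meet every dense set of $M$. This choice of $\theta$ is not a refinement but the step that makes the construction go through at all; as written, your argument stalls precisely here.
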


The first proof makes use of an observation of Hamkins and
Seabold involving Boolean ultrapowers (see
\cite{HamkinsSeabold:BooleanUltrapowers}), which is
essentially encapsulated in Theorems
\ref{Theorem.FriendlyStrC} and
\ref{Theorem.FriendlyStrCemb}.
\begin{definition}
A  forcing notion $\P$ is {\df $\ltkappa$-friendly} if for
every $\gamma<\kappa$, there is a condition $p\in\P$ below
which the restricted forcing $\P\restrict p$ adds no
subsets to $\gamma$.
\end{definition}

\begin{theorem}[Hamkins, Seabold \cite{HamkinsSeabold:BooleanUltrapowers}]\label{Theorem.FriendlyStrC}
If $\kappa$ is a strongly compact cardinal and $\P$ is a
$\ltkappa$-friendly notion of forcing, then there is an
inner model $W$ satisfying every sentence forced by $\P$
over $V$.
\end{theorem}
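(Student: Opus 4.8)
The plan is to realize $W$ as the transitive collapse of a Boolean ultrapower of $V$ by a carefully chosen ultrafilter on the Boolean completion $\B$ of $\P$. Since $\P$ and $\B$ force the same sentences, it suffices to produce a countably complete ultrafilter $U$ on $\B$ for which the associated Boolean ultrapower $j_U : V \to V_U$ is well-founded: by the {\L}o\'s theorem for Boolean-valued models one has $V_U \satisfies \varphi$ exactly when $\boolval{\varphi}^\B \in U$, so if $\varphi$ is forced by $\P$, meaning $\boolval{\varphi}^\B = \one$, then $\varphi$ holds in $V_U$, and hence in its transitive collapse $W$. Because $j_U$ is elementary, $W \satisfies \ZFC$, and since $j_U(\alpha) \ge \alpha$ for every ordinal $\alpha$ the collapse contains all the ordinals, so $W$ is a genuine inner model.

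First I would use the strong compactness of $\kappa$ to fix an elementary embedding $j : V \to M$ with critical point $\kappa$ together with a set $s \in M$ satisfying $j\image\B \of s$ and $|s|^M < j(\kappa)$; in $M$ I then fix a bijection $e : \gamma \to s$ with $\gamma < j(\kappa)$. The embedding $j$ is elementary, so $j(\P)$ is ${<}j(\kappa)$-friendly in $M$; applying the definition at $\gamma$ produces a condition $q \in j(\P)$ below which the forcing $j(\B) \restrict q$ adds no new subsets of $\gamma$.

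The heart of the argument is to extract from $q$ a single condition that decides the membership in the generic of every element of $j\image\B$ at once. Consider the canonical $j(\B)$-name $\dot f$ for the function $\gamma \to 2$ sending $\alpha$ to $1$ just in case $e(\alpha)$ belongs to the generic ultrafilter. Since $j(\B) \restrict q$ adds no new subset of $\gamma$, this $\dot f$ is forced below $q$ to name a function already in $M$, so by mixing there is a condition $q' \le q$ and a fixed $g : \gamma \to 2$ in $M$ with $q' \forces \dot f = \check g$. Equivalently, $q'$ decides $j(b)$ for every $b \in \B$: either $q' \le j(b)$ or $q' \le -j(b)$. I then set $U = \set{ b \in \B \st q' \le j(b)}$. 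This $U$ is an ultrafilter on $\B$ because $q'$ decides every $j(b)$, and it is $\kappa$-complete: for any sequence $\<b_\xi \st \xi < \delta>$ with $\delta < \kappa$ and each $b_\xi \in U$, the critical point being $\kappa$ gives $j(\bigwedge_\xi b_\xi) = \bigwedge_\xi j(b_\xi) \ge q'$, whence $\bigwedge_\xi b_\xi \in U$. In particular $U$ is countably complete.

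Finally, countable completeness of $U$ forces well-foundedness of $V_U$, since an infinite descending $\in_U$-chain of names would yield a decreasing sequence of Boolean values lying in $U$ whose meet, nonzero by countable completeness, would force the existence of an ill-founded membership sequence, contradicting foundation in the Boolean-valued model. Collapsing $V_U$ then delivers the inner model $W$. I expect the main obstacle to be precisely the simultaneous-decision step: one must ensure that a single condition $q'$, rather than merely a dense collection of conditions, pins down the entire trace of the generic on $j\image\B$, and this is exactly the point at which $\ltkappa$-friendliness is indispensable, since without it $j(\B)$ could add a fresh subset of $\gamma$ coding that trace and no such $q'$ need exist.
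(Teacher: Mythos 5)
Your proposal is correct and follows essentially the same route as the paper: a Boolean ultrapower of $V$ by the seed-generated ultrafilter $U=\set{b\in\B\st q'\leq j(b)}$, where a strong compactness embedding $j:V\to M$ covers $j\image\B$ by a set $s\in M$ with $|s|^M<j(\kappa)$, and $\ltkappa$-friendliness (applied in $M$ at $|s|^M$) yields the condition below which no new subsets of $|s|^M$ are added, with $\kappa$-completeness of $U$ coming from $\cp(j)=\kappa$ and well-foundedness of the quotient from countable completeness. The only difference is cosmetic: you extract the single deciding condition $q'$ by forcing-language reasoning --- the name for the trace of the generic on $s$ must be forced equal to a ground-model function $\check g$, so some $q'\leq q$ decides $j(b)$ for every $b\in\B$ --- whereas the paper reaches the same seed by the explicit $(\lambda,2)$-distributivity identity producing a nonzero $q=\bigwedge_{b\in s}p\wedge(\neg)^{f(b)}b$; these are the same argument in two guises, since the paper itself observes that $(\lambda,2)$-distributivity is equivalent to the statement that every new function from $\lambda$ to $2$ lies in the ground model.
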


\begin{proof}
The proof uses Boolean ultrapowers (see
\cite{HamkinsSeabold:BooleanUltrapowers} for a full
account). To make this paper self-contained, we shall
review the method. Suppose that $\B$ is the complete
Boolean algebra corresponding to the forcing notion $\P$.
Let $V^\B$ be the usual class of $\B$-names, endowed as a
Boolean-valued structure by the usual recursive definition
of the Boolean values $\boolval{\varphi}$ for every
assertion $\varphi$ in the forcing language. Now suppose
that $U\of\B$ is an ultrafilter, not necessarily generic in
any sense, and define the equivalence relation
$\sigma=_U\tau\iff \boolval{\sigma=\tau}\in U$. When $U$ is
not $V$-generic, this relation is not the same as
$\val(\sigma,U)=\val(\tau,U)$. Nevertheless, the relation
$\sigma\in_U\tau\iff\boolval{\sigma\in\tau}\in U$ is
well-defined with respect to $=_U$, and we may form the
quotient structure $V^\B/U$ as the collection of (Scott's
trick reduced) equivalence classes $[\tau]_U$. The relation
$\in_U$ is set-like, because whenever $\sigma\in_U\tau$,
then $\sigma$ is $=_U$ equivalent to a mixture of the names
in the domain of $\tau$, and there are only set many such
mixtures. One can easily establish \L os' theorem that
$V^\B/U\satisfies\varphi[[\tau]_U]\iff
\boolval{\varphi(\tau)}\in U$. In particular, any statement
$\varphi$ that is forced by $\one$ will be true in
$V^\B/U$. Thus, since $U$ is in $V$, we have produced in
$V$ a class model $V^\B/U$ satisfying the desired theory;
but there is no reason so far to suppose that this model is
well-founded.

In order to find an ultrafilter $U$ for which $V^\B/U$ is
well-founded, we shall make use of our assumption that $\P$
and hence also $\B$ is $\ltkappa$-friendly for a strongly
compact cardinal $\kappa$. Just as with classical powerset
ultrapowers, the structure $V^\B/U$ is well-founded if and
only if $U$ is countably complete (see
\cite{HamkinsSeabold:BooleanUltrapowers}). Next, consider
any $\theta\geq |\B|$ and let $j:V\to M$ be a
$\theta$-strong compactness embedding, so that
$j\image\B\of s\in M$ for some $s\in M$ with
$|s|^M<j(\kappa)$. Since $j(\B)$ is
${\smalllt}j(\kappa)$-friendly, there is a condition $p\in
j(\B)$ such that $j(\B)\restrict p$ adds no new subsets to
$\lambda=|s|^M$. Thus, $j(\B)\restrict p$ is
$(\lambda,2)$-distributive in $M$. Applying this, it
follows in $M$ that $$p=p\wedge 1=p\wedge\bigwedge_{b\in
s}(b\vee\neg b)=\bigwedge_{b\in s}(p\wedge
b)\vee(p\wedge\neg b)=\bigvee_{f\in2^s}\bigwedge_{b\in
s}(p\wedge(\neg)^{f(b)} b),$$ where $(\neg)^0b=b$ and
$(\neg)^1b=\neg b$, and where we use distributivity to
deduce the final equality. Since $p$ is not $0$, it follows
that there must be some $f$ with $q=\bigwedge_{b\in
s}p\wedge(\neg)^{f(b)}b\neq 0$. Note that $f(b)$ and
$f(\neg b)$ must have opposite values. Now we use $q$ as a
seed to define the ultrafilter $U=\set{a\in \B\st q\leq
j(a)}$, which is the same as $\set{a\in\B\st f(j(a))=0}$.
This is easily seen to be a $\kappa$-complete filter using
the fact that $\cp(j)=\kappa$ (just as in the powerset
ultrafilter cases known classically). It is an ultrafilter
precisely because $s$ covers $j\image\B$, so either
$f(j(a))=0$ or $f(\neg j(a))=0$, and so either $a\in U$ or
$\neg a\in U$, as desired. In summary, using this
ultrafilter $U$, the structure $V^\B/U$ is a well-founded
set-like model of the desired theory. The corresponding
Mostowski collapse is the desired inner model $W$.
\end{proof}

The metamathematical reader will observe that Theorem
\ref{Theorem.FriendlyStrC} is more properly described as a
theorem scheme, since we defined a certain inner model,
using $\P$ as a parameter, and then proved of each sentence
forceable by $\P$ over $V$, that this sentence also holds
in the inner model. By Tarski's theorem on the
non-definability of truth, it does not seem possible to
state the conclusion of Theorem \ref{Theorem.FriendlyStrC}
in a single first order statement. Several similar theorems
in this article will also be theorem schemes.

The following account of the Boolean ultrapower may be
somewhat more illuminating.

\begin{theorem}[\cite{HamkinsSeabold:BooleanUltrapowers}]\label{Theorem.FriendlyStrCemb}
If $\kappa$ is strongly compact and $\P$ is
$\ltkappa$-friendly, then there is an elementary embedding
$j:V\to\Vbar$ into an inner model $\Vbar$ and a
$\Vbar$-generic filter $G\of j(\P)$ with $G\in V$. In
particular, $W=\Vbar[G]$ fulfills Theorem
\ref{Theorem.FriendlyStrC}.
\end{theorem}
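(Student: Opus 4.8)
The plan is to reconstruct the Boolean ultrapower of Theorem~\ref{Theorem.FriendlyStrC} not merely as an abstract quotient satisfying a theory, but as the target of a genuine elementary embedding $j:V\to\Vbar$, and then to locate the promised $\Vbar$-generic filter $G$ inside $V$. Recall that in the proof of Theorem~\ref{Theorem.FriendlyStrC} we built a countably complete (indeed $\kappa$-complete) ultrafilter $U\of\B$ from the seed $q$, and that \L os's theorem gave a well-founded quotient $V^\B/U$. The first step is to observe that the natural map $x\mapsto[\check x]_U$, sending each $x\in V$ to the equivalence class of its canonical check-name, is an elementary embedding of $V$ into $V^\B/U$; composing with the Mostowski collapse yields $j:V\to\Vbar$ with $\Vbar$ transitive. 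Elementarity here is immediate from \L os's theorem applied to formulas whose witnesses are check-names, since $\boolval{\varphi(\check x_1,\dots,\check x_n)}$ is either $\one$ or $0$ according to whether $V\satisfies\varphi(\vec x)$.

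Next I would produce the generic filter $G$. The key point is that the full Boolean ultrapower structure $V^\B/U$ carries not only the image of the ground model but also a distinguished class corresponding to the generic object itself. Concretely, consider the canonical name $\dot G$ for the generic filter; its equivalence class $[\dot G]_U$ names, inside $\Vbar$, a filter on $j(\B)$ (equivalently on $j(\P)$). Define $G$ in $V$ by setting, for $b\in\B$, the image $j(b)$ into $G$ exactly when $\boolval{\check b\in\dot G}\in U$, which by the definition of $U$ from the seed $q$ unwinds to the condition $q\leq j(b)$, i.e.\ to $b\in U$. Thus $G$ is explicitly definable from $U$ and $j$ and hence lies in $V$. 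One then checks that $G$ meets every dense subset of $j(\P)$ lying in $\Vbar$: given such a dense set, it has the form $j(D)$-like representatives via a name $[\dot D]_U$, and genericity follows because \L os's theorem transfers the statement ``$\dot D$ is dense and $\dot G$ meets $\dot D$,'' which is forced by $\one$ over $V$, up to truth in $V^\B/U$. After collapsing, this says precisely that $G$ is $\Vbar$-generic for $j(\P)$.

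Finally, setting $W=\Vbar[G]$, I would verify the ``in particular'' clause. Since $j$ is elementary, any sentence $\varphi$ forced by $\P$ over $V$ is forced by $j(\P)$ over $\Vbar$; and because $G$ is genuinely $\Vbar$-generic, the forcing theorem applied inside $\Vbar$ gives $\Vbar[G]\satisfies\varphi$. Hence $W=\Vbar[G]$ satisfies every sentence forced by $\P$, which is exactly the conclusion of Theorem~\ref{Theorem.FriendlyStrC}; this realizes the earlier inner model as a concrete forcing extension of the embedding target, with the generic filter available in $V$.

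I expect the main obstacle to be the careful identification of the generic filter and the proof of its genericity over $\Vbar$ rather than over the non-well-founded quotient. The subtlety is that $U$ is emphatically \emph{not} $V$-generic, so $\val(\dot G,U)$ is not literally a generic filter in the naive sense; what rescues the argument is that we are asking for genericity of $G$ over the \emph{inner model} $\Vbar$, whose dense sets all arise as values of names via \L os's theorem, and it is precisely the countable (in fact $\kappa$-) completeness of $U$, together with the $\ltkappa$-friendliness that forced $q$ to exist, that guarantees enough dense sets are met. Verifying that every dense set of $j(\P)$ in $\Vbar$ is represented by a name to which \L os's theorem applies, and that the collapse respects the membership relation $\in_U$ so that the transferred genericity statement has its intended meaning, is the technical heart of the argument.
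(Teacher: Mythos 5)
Your skeleton is the same as the paper's---build the Boolean ultrapower from the $\kappa$-complete ultrafilter $U$ of Theorem \ref{Theorem.FriendlyStrC}, let $j$ be the check-name map, let $G=[\dot G]_U$, and transfer genericity through \L os's theorem---but two of your concrete identifications are wrong, and both failures trace to the same missing idea: the ground-model predicate and mixtures of check names. First, the map $x\mapsto[\check x]_U$ is \emph{not} elementary from $V$ into the full quotient $V^\B/U$: by \L os, the quotient satisfies every sentence forced by $\P$ (that is the very content of Theorem \ref{Theorem.FriendlyStrC}), so if $\P$ forces $\neg\CH$ while $\CH$ holds in $V$, elementarity fails already for a sentence. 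Your justification---that $\boolval{\varphi(\check x_1,\dots,\check x_n)}$ is $\one$ or $0$ according to truth in $V$---is false for formulas with unrelativized quantifiers; for instance $\boolval{\check\lambda\hbox{ is a cardinal}}$ can be strictly between $0$ and $\one$, and $\boolval{\CH}$ need not match $V$'s opinion. It holds only for formulas whose quantifiers are relativized to the ground model, and this is exactly what the paper supplies: it defines $\boolval{\tau\in\check V}=\bigvee_{x\in V}\boolval{\tau=\check x}$ and takes $\Vbar=\{[\tau]_U\st\boolval{\tau\in\check V}\in U\}$, the classes of \emph{mixtures} of check names, proving elementarity of $j:V\to\Vbar$ by an induction relativized to this predicate. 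Note that $\Vbar$ lies strictly between $j\image V$ and the full quotient; your alternative reading, collapsing the image of the check-name map itself, would give $\Vbar=V$ and $j=\mathrm{id}$, which makes the theorem false, since a nontrivial $\P$ has no $V$-generic filter lying in $V$. It is precisely the mixtures along maximal antichains that $U$ fails to meet which give $j$ its critical point and make $\Vbar$-genericity attainable inside $V$.

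Second, your explicit description of $G$---``$j(b)\in G$ exactly when $b\in U$,'' so that $G$ is the filter generated by $j\image U$---does not define the generic filter, because $j\image\B$ is far from all of $j(\B)$. Fix a maximal antichain $A\of\B$ not met by $U$ (one exists, since otherwise $U$ would be $V$-generic and could not lie in $V$), fix $x\neq 0$ with $\neg x\in U$, set $b_a=a\vee x$, and let $\tau$ be the mixture assigning $\check b_a$ to each $a\in A$. Then $\boolval{\tau\in\dot G}=\bigvee_{a\in A}(a\wedge b_a)=\bigvee_{a\in A}a=\one$, so $[\tau]_U\in[\dot G]_U$; but for any $c\in U$ one computes $\boolval{\check c\leq\tau}=\bigvee\{\,a\in A\st c\wedge\neg x\leq a\,\}$, which is at most a single element of the unmet antichain $A$ (as $c\wedge\neg x\in U$ is nonzero and $A$ is an antichain) and hence is not in $U$. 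So $[\tau]_U$ lies above no $j(c)$ with $c\in U$, and a similar computation shows its Boolean complement does not either; thus the filter generated by $j\image U$ is not even an ultrafilter on $j(\B)$, let alone $\Vbar$-generic. What is true is only that $G\intersect j\image\B=j\image U$. The correct definition is the object you gesture at before giving the recipe: $G=[\dot G]_U=\{[\tau]_U\st\boolval{\tau\in\dot G}\in U\}$ itself, which is in $V$ simply because $U$ and the whole quotient construction are, and whose $\Vbar$-genericity follows from $\boolval{\dot G\hbox{ is }\check V\hbox{-generic for }\check\B}=\one$ together with \L os, exactly as in your genericity paragraph and in the paper. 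With these two repairs---$\Vbar$ as the mixtures-of-checks class and $G$ as $[\dot G]_U$ outright---your argument becomes the paper's proof.
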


\begin{proof}
This is actually what is going on in the Boolean quotient.
We may define the canonical predicate for the ground model
of $V^\B$ by $\boolval{\tau\in\check V}=\bigvee_{x\in
V}\boolval{\tau=\check x}$, and let $\Vbar=\{[\tau]_U\mid
\boolval{\tau\in\check V}\in U\}$, which is actually the
same as $\{[\tau]_U\mid \boolval{\tau\in\check V}=\one\}$.
An easy induction on formulas shows that the map
$j:x\mapsto [\check x]_U$ is an elementary embedding
$j:V\to \Vbar$, and this is the map known as the Boolean
ultrapower. As was observed in
\cite{HamkinsSeabold:BooleanUltrapowers}, the critical
point of $j$ is the cardinality of the smallest maximal
antichain not met by the $U$, which in this case  must be
at least $\kappa$ since $U$ is $\kappa$-complete. If $\dot
G$ is the (usual) canonical name for the generic filter,
then $\boolval{\dot G\hbox{ is }\check V\hbox{-generic for
}\check\B}=1$, and so the corresponding equivalence class
$G=[\dot G]_U$ is $\Vbar$-generic for $[\check\B]_U=j(\B)$.
Since these embeddings and equivalence classes all exist in
$V$, we have the entire Boolean ultrapower
$$j:V\to\Vbar\of\Vbar[G]$$ existing in $V$, as desired. The
structure $\Vbar[G]$ is isomorphic to the quotient $V^\B/U$
by the map associating $[\tau]_U=[\dot\val(\check\tau,\dot
G)]_U$ in $V^\B/U$ with $\val([\check\tau]_U,G)$ in
$\Vbar[G]$.
\end{proof}

Certain instances of this phenomenon are already well
known. For example, consider Prikry forcing with respect to
a normal measure $\mu$ on a measurable cardinal $\kappa$,
which is $\ltkappa$-friendly because it adds no bounded
subsets to $\kappa$. If $V\to M_1\to M_2\to\cdots$ is the
usual iteration of $\mu$, with a direct limit to
$j_\omega:V\to M_\omega$, then the critical sequence
$\kappa_0,\kappa_1,\kappa_2,\ldots$ is well known to be
$M_\omega$-generic for the corresponding Prikry forcing at
$j_\omega(\kappa)$ using $j_\omega(\mu)$. This is precisely
the situation occurring in Theorem
\ref{Theorem.FriendlyStrCemb}, where we have an embedding
$j:V\to\Vbar$ and a $\Vbar$-generic filter $G\of j(\P)$ all
inside $V$. Thus, Theorem \ref{Theorem.FriendlyStrCemb}
generalizes this classical aspect about Prikry forcing to
all friendly forcing under the stronger assumption of
strong compactness.

We now derive Theorem
\ref{Theorem.InnerModelIndesctructibleSC} as a corollary.

\begin{proof}[Proof of Theorem \ref{Theorem.InnerModelIndesctructibleSC}]
We shall apply Theorem \ref{Theorem.FriendlyStrC} by
finding a $\ltkappa$-friendly version of the Laver
preparation. The original Laver preparation of
\cite{Laver78} is not friendly, because there are many
stages $\gamma<\kappa$ at which it definitely adds, for
example, a Cohen subset to $\gamma$. But a relatively
simple modification will make it $\ltkappa$-friendly.
Suppose that $\ell\from\kappa\to V_\kappa$ is a Laver
function. It follows easily that the restriction
$\ell\restrict(\gamma,\kappa)$ to any final segment
$(\gamma,\kappa)$ of $\kappa$ is also a Laver function, and
the corresponding Laver preparation
$\P_{\ell\restrict(\gamma,\kappa)}$ is $\leqgamma$-closed,
hence adding no new subsets to $\gamma$, while still
forcing indestructibility for $\kappa$. Let
$\P=\oplus\{\P_{\ell\restrict(\gamma,\kappa)}\mid
\gamma<\kappa\}$ be the lottery sum of all these various
preparations\footnote{If ${\mathcal A} = \{\P_i \mid i \in
I \}$, then the {\em lottery sum} $\oplus {\mathcal A}$ is
the partial order with underlying set $\{\langle \P, p
\rangle \mid \P \in {\mathcal A}\text{ and }p \in
\P\}\union\{\one\}$, ordered by $\langle \P, p \rangle \le
\langle \Q, q \rangle$ if and only if $\P = \Q$ and $p \le
q$, with $\one$ above everything. The lottery preparation
of \cite{Ham00} employs long iterations of such sums.},
so that the generic filter in effect selects a single
$\gamma$ and then forces with
$\P_{\ell\restrict(\gamma,\kappa)}$. This poset is
$\ltkappa$-friendly, since a condition could opt in the
lottery to use a preparation with $\gamma$ as large below
$\kappa$ as desired. The point is that the Laver
preparation works fine for indestructibility even if we
allow it to delay the start of the forcing as long as
desired, and such a modification makes it
$\ltkappa$-friendly. So Theorem \ref{Theorem.FriendlyStrC}
applies, and Theorem
\ref{Theorem.InnerModelIndesctructibleSC} now follows as a
corollary.
\end{proof}

After realizing that Theorem
\ref{Theorem.InnerModelIndesctructibleSC} could be proved
via Boolean ultrapowers, we searched for a direct proof. We
arrived at the following stronger result, which produces
more robust inner models $W$, satisfying a closure
condition $W^\theta\of W$.

\begin{theorem}\label{Theorem.IndestructibleSCrobust}
 If there is a supercompact cardinal, then for every cardinal $\theta$ there is an inner model $W$ with an indestructible supercompact cardinal, such that $W^\theta\of
 W$.
\end{theorem}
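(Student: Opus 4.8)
The plan is to reprise the friendly Laver preparation from the proof of Theorem~\ref{Theorem.InnerModelIndesctructibleSC}, but to extract the inner model from a genuine supercompactness embedding carrying a high degree of closure, so that this closure descends to $W$. Fix the cardinal $\theta$ and let $\kappa$ be supercompact. As before, let $\P=\oplus\{\P_{\ell\restrict(\gamma,\kappa)}\mid\gamma<\kappa\}$ be the $\ltkappa$-friendly lottery sum of delayed Laver preparations, which forces the indestructibility of $\kappa$. Choose $\lambda\geq\theta$ large (at least above $|\P|=\kappa$) and let $j:V\to M$ be a $\lambda$-supercompactness embedding with $\cp(j)=\kappa$, $j(\kappa)>\lambda$ and $M^\lambda\of M$, induced by a normal fine measure on $P_\kappa\lambda$. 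In $M$ the forcing $j(\P)$ is the corresponding ${\smalllt}j(\kappa)$-friendly preparation at $j(\kappa)$, so since $\lambda<j(\kappa)$ there is a condition $p\in j(\P)$, opting in the lottery to delay past $\lambda$, below which $j(\P)\restrict p$ adds no subsets to $\lambda$ and is in fact $\leq\lambda$-closed in $M$.

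Next I would produce the inner model and an internal generic exactly as in Theorem~\ref{Theorem.FriendlyStrCemb}. Using a seed $q\leq p$, obtained from the distributivity computation of Theorem~\ref{Theorem.FriendlyStrC} applied to $j\image\B$ (where $\B$ is the completion of $\P$), define the ultrafilter $U=\set{a\in\B\st q\leq j(a)}$ and form the Boolean ultrapower $j_U:V\to\Vbar$ together with the $\Vbar$-generic filter $G=[\dot G]_U\of j_U(\P)$, all lying in $V$. Setting $W=\Vbar[G]$, the model $W$ satisfies every sentence forced by $\P$ over $V$; in particular $j_U(\kappa)$ is an indestructible supercompact cardinal of $W$. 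This much reproduces Theorem~\ref{Theorem.InnerModelIndesctructibleSC}, and it remains only to secure the closure condition $W^\theta\of W$.

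The new ingredient, and the main obstacle, is to verify that $W$ is closed under $\theta$-sequences in $V$. The point of using a $\lambda$-supercompactness embedding, rather than a bare strong compactness embedding, is that the attendant fineness—arranging the seed $q$ so that $j\image\lambda$ is covered—should force the Boolean ultrapower $\Vbar$ itself to satisfy $\Vbar^\lambda\of\Vbar$ in $V$, mirroring the closure $M^\lambda\of M$ of the associated ultrapower. Granting this, the closure passes to the generic extension: since $j_U(\P)\restrict p$ is $\leq\lambda$-closed in $\Vbar$ and $G\in V$, every $\lambda$-sequence of elements of $W$ is captured by a descending thread of conditions of length $\leq\lambda$, which $\Vbar$ can assemble by its own closure, so that $W^\lambda\of W$ and hence $W^\theta\of W$. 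The delicate step is precisely this transfer of closure into the Boolean ultrapower and then through the forcing by $j_U(\P)$; I expect it to require taking $\lambda$ well above $\theta$ and $\kappa$, arranging that $j\image\lambda$ sit inside the seed $q$, and then a distributivity argument showing that each such $\lambda$-sequence is added by an initial, $\leq\lambda$-closed segment of the forcing and therefore already belongs to $\Vbar[G]$.
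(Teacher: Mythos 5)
Your argument founders on its central claim, namely that the Boolean ultrapower can be arranged to satisfy $\Vbar^\lambda\of\Vbar$. This is not merely unproven; it is provably false whenever $\lambda\geq 2^{2^\kappa}$, no matter which embedding $j$ and which seed $q$ you use. The reason is a counting bound: the Boolean ultrapower is computed from an ultrafilter on $\B$, and $|\B|\leq 2^\kappa$ since $|\P|=\kappa$; every element of $j_U(\B)$, and every ordinal below $j_U(\kappa)$, is represented by a mixture of ground-model objects along a maximal antichain of $\B$, so in $V$ we have $|j_U(\B)|\leq 2^{2^\kappa}$ and $j_U(\kappa)<(2^{2^\kappa})^\plus$. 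Now $G=[\dot G]_U\in V$ is $\Vbar$-generic for the atomless algebra $j_U(\B)$. If $\Vbar^\lambda\of\Vbar$ held with $\lambda\geq 2^{2^\kappa}$, then $G$, a set of at most $\lambda$ many elements of $\Vbar$ lying in $V$, could be enumerated in $V$ by a $\lambda$-sequence of elements of $\Vbar$; closure would place that sequence, hence its range $G$, inside $\Vbar$ --- but no $\Vbar$-generic filter for an atomless complete Boolean algebra belongs to $\Vbar$. So the closure you want is incompatible with the internal genericity your construction depends on, and since the theorem quantifies over all $\theta$ (and you need $\lambda\geq\theta$), the approach cannot succeed. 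The weaker hope that $W=\Vbar[G]$ be closed fails as well: $W$ believes $j_U(\kappa)$ is inaccessible, so closure under $2^{2^\kappa}$-sequences would import $P(\kappa)^V$ and $P(P(\kappa)^V)^V$ into $W$, whence the strong-limit property of $j_U(\kappa)$ in $W$ forces $j_U(\kappa)>2^{2^\kappa}$ as ordinals; but then $j_U(\kappa)$ is not a cardinal of $V$ (its $V$-cardinality is at most $2^{2^\kappa}$), and closure would also import a surjection of $2^{2^\kappa}$ onto $j_U(\kappa)$ into $W$, contradicting that $j_U(\kappa)$ is a cardinal there. The underlying error is conflating the two embeddings: the closure $M^\lambda\of M$ belongs to $j$, but your inner model is the target of $j_U$, whose width is governed by the algebra $\B$ alone and not by the strength of $j$; also note that the seed construction covers $j\image\B$, not $j\image\lambda$, and no choice of seed can repair this.

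This is precisely why the paper proves this theorem by a different, second method rather than by Boolean ultrapowers. There one takes the honest $\theta$-supercompactness ultrapower $j:V\to M$ by a normal fine measure on $P_\kappa(\theta)$, which genuinely satisfies $M^\theta\of M$, and defines the delayed Laver preparation $\P$ \emph{inside} $M$ at $j(\kappa)$, with nontrivial forcing only in the interval $(\theta,j(\kappa))$; then one constructs an actual $M$-generic filter $G\in V$ by diagonalization. The two ingredients your outline lacks are exactly what make that work: first, $\P$ is $\leqtheta$-closed in $M$ and hence in $V$, because $M^\theta\of M$; second --- and this is the step with real content --- the dense subsets of $\P$ lying in $M$ number at most $\theta^\plus$ when counted in $V$, which uses Solovay's theorem that the \SCH\ holds above a supercompact cardinal: after replacing the given $\theta$ by a singular strong limit cardinal of cofinality at least $\kappa$ above it, one gets $2^{\theta^{\ltkappa}}=\theta^\plus$ and so $|j(2^\kappa)|\leq\theta^\plus$. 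The diagonalization then produces $G$, and $W=M[G]$ satisfies $W^\theta\of W$ because every $\theta$-sequence of ordinals of $V$ already lies in $M$. In short, the closure must come from a wide ultrapower target together with an externally constructed generic filter; the thinness of the Boolean ultrapower is exactly what makes its generic filter available inside $V$, and exactly what precludes the closure you need.
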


\begin{proof}
Suppose that $\kappa$ is supercompact. By a result of
Solovay \cite{Sol74}, the \SCH\ holds above $\kappa$, and
so if $\theta$ is any singular strong limit cardinal of
cofinality at least $\kappa$, then
$2^{\theta^\ltkappa}=\theta^\plus$. Consider any such
$\theta$ as large as desired above $\kappa$, and let
$j:V\to M$ be a $\theta$-supercompactness embedding, the
ultrapower by a normal fine measure on $P_\kappa(\theta)$.
Thus, $\theta<j(\kappa)$ and $M^\theta\of M$. By
elementarity, $j(\kappa)$ is supercompact in $M$. Let $\P$
be the Laver preparation of $j(\kappa)$ in $M$, with
nontrivial forcing only in the interval
$(\theta,j(\kappa))$. That is, we put off the start of the
Laver preparation until beyond $\theta$, and this is
exactly what corresponds to the use of friendliness in the
earlier proof. Notice that $\P$ is $\leqtheta$-closed in
$M$, and therefore also $\leqtheta$-closed in $V$. But
also, $\P$ has size $j(\kappa)$ in $M$, and has at most
$j(2^\kappa)$ many dense subsets in $M$. Observe in $V$
that $|j(2^\kappa)|\leq (2^\kappa)^{\theta^\ltkappa}\leq
(2^{\theta^\ltkappa})^{\theta^\ltkappa}=2^{\theta^\ltkappa}=\theta^\plus$.
In $V$ we may therefore enumerate the dense subsets of $\P$
in $M$ in a $\theta^\plus$ sequence, and using the fact
that $\P$ is $\leqtheta$-closed, diagonalize to meet them
all. So there is in $V$ an $M$-generic filter $G\of\P$.
Thus, $M[G]$ is an inner model of $V$, in which $j(\kappa)$
is an indestructible supercompact cardinal. Since
$M^\theta\of M$, it follows that $M[G]$ contains all
$\theta$-sequences of ordinals in $V$, and so also
$M[G]^\theta\of M[G]$. So $W=M[G]$ is as desired.
\end{proof}

This second method of proof can be generalized to the
following, where we define that $\P$ is {\df
$\ltkappa$-superfriendly}, if for every $\gamma<\kappa$
there is a condition $p\in\P$ such that $\P\restrict p$ is
$\leqgamma$-strategically closed. It was the
superfriendliness of the Laver preparation that figured in
the proof of Theorem \ref{Theorem.IndestructibleSCrobust}
and the proof generalizes in a straightforward way to
obtain the theorem below.

\begin{theorem}\label{Theorem.FriendlySC}
If $\kappa$ is supercompact and $\P$ is
$\ltkappa$-superfriendly, then for every $\theta$ there is
an inner model $W$ satisfying every statement forced by
$\P$ over $V$ and for which $W^\theta\of W$.
\end{theorem}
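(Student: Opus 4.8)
The plan is to generalize the proof of Theorem~\ref{Theorem.IndestructibleSCrobust} by replacing the specific Laver preparation with an arbitrary $\ltkappa$-superfriendly forcing $\P$, and by replacing closure with strategic closure throughout. Suppose $\kappa$ is supercompact and $\P$ is $\ltkappa$-superfriendly. As in the earlier argument, let $\theta$ be a singular strong limit cardinal of cofinality at least $\kappa$, taken as large as desired above $\kappa$; by Solovay's theorem the \SCH\ gives $2^{\theta^\ltkappa}=\theta^\plus$. Let $j:V\to M$ be a $\theta$-supercompactness embedding by a normal fine measure on $P_\kappa(\theta)$, so that $\theta<j(\kappa)$ and $M^\theta\of M$.

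First I would apply superfriendliness inside $M$ to the forcing $j(\P)$. Since $\P$ is $\ltkappa$-superfriendly, by elementarity $j(\P)$ is ${\smalllt}j(\kappa)$-superfriendly in $M$, so for the ordinal $\theta<j(\kappa)$ there is a condition $p\in j(\P)$ such that $j(\P)\restrict p$ is $\leqtheta$-strategically closed in $M$. This $p$ plays the role of the ``delayed start'' in the indestructibility argument. Because $M^\theta\of M$, any descending sequence of length $\leq\theta$ through $j(\P)\restrict p$ that lies in $V$ in fact lies in $M$, so the second player's winning strategy (which exists in $M$) can be applied in $V$ to sequences built in $V$; hence $j(\P)\restrict p$ is $\leqtheta$-strategically closed in $V$ as well.

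Next I would construct an $M$-generic filter for $j(\P)\restrict p$ lying in $V$ by the diagonalization used before. As in Theorem~\ref{Theorem.IndestructibleSCrobust}, $j(\P)$ has at most $j(2^\kappa)$ many dense subsets in $M$, and $|j(2^\kappa)|\leq\theta^\plus$ in $V$, so in $V$ we may enumerate the dense subsets of $j(\P)$ lying in $M$ in a sequence of length $\theta^\plus$. Using the $\leqtheta$-strategic closure, I would build a descending sequence meeting them all: at each stage the second player's strategy lets us continue past limits of length $\leq\theta$, so the construction runs through all $\theta^\plus$ dense sets and yields an $M$-generic filter $G\of j(\P)$ with $p\in G$ and $G\in V$. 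Then $W=M[G]$ is an inner model of $V$ in which, by \L os and elementarity, every sentence forced by $\P$ over $V$ holds (since $G$ is generic for $j(\P)$ over $M$ below a condition, and $\one$ forces these sentences). Finally, since $M^\theta\of M$ and $G\in V$, the model $M[G]$ contains every $\theta$-sequence of ordinals of $V$, whence $W^\theta\of W$.

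The main obstacle I anticipate is the passage from strategic closure in $M$ to a usable closure property in $V$ during the diagonalization. With genuine $\leqtheta$-closure (as for the Laver preparation) one simply takes lower bounds of descending sequences, and $M^\theta\of M$ guarantees those sequences are in $M$; with mere \emph{strategic} closure one must instead follow the second player's strategy and ensure the descending sequence being built remains within the strategy's purview and that its initial segments of length $\leq\theta$ are recognized by $M$. The key point making this work is again $M^\theta\of M$: every relevant $\leqtheta$-length play lies in $M$, so $M$'s strategy applies to it, and the generic build never requires a limit step beyond the length where the strategy guarantees a legal move. I would be careful to verify that meeting $\theta^\plus$ dense sets only requires continuing through descending chains of length $\leq\theta$ at the limit stages, which holds because each new dense set is met by extending to a single lower condition, so the cofinally long construction is organized as a union of $\leqtheta$-strategically-supported segments rather than as a single chain of length $\theta^\plus$.
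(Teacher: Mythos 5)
Your overall architecture is the same as the paper's: take $\theta$ singular strong limit of cofinality at least $\kappa$ with $2^{\theta^\ltkappa}=\theta^\plus$, let $j:V\to M$ be the $\theta$-supercompactness ultrapower, use superfriendliness of $j(\P)$ in $M$ to get a strategically closed cone $j(\P)\restrict p$, diagonalize in $V$ against the dense sets of $M$, and output $W=M[G]$. But two steps are genuinely broken. First, your cardinality bound on the dense sets is wrong for a general superfriendly $\P$: the bound ``$j(\P)$ has at most $j(2^\kappa)$ many dense subsets in $M$'' was specific to the Laver preparation, which has size $\kappa$. An arbitrary $\ltkappa$-superfriendly poset can be as large as you like (for instance $\Add(\kappa^\plus,\lambda)$ for huge $\lambda$ is superfriendly), so it can have far more than $2^\kappa$, indeed more than $\theta^\plus$, dense subsets, and your enumeration step collapses. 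The paper repairs this by additionally demanding $|\P|\leq\theta$ when choosing $\theta$ (harmless, since $\theta$ is as large as desired), so that $\P$ has at most $2^\theta=\theta^\plus$ dense subsets, and then the dense subsets of $j(\P)$ in $M$ number at most $|j(\theta^\plus)|\leq(\theta^\plus)^{\theta^\ltkappa}=\theta^\plus$ in $V$. You never tie $\theta$ to $|\P|$, and without that the theorem you prove is only about posets with few dense sets.

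Second, your handling of strategic closure in the diagonalization does not work as described. You extract from superfriendliness only a condition $p$ with $j(\P)\restrict p$ being $\leqtheta$-strategically closed, i.e., a strategy for games of length $\theta+1$, and then propose to run a length-$\theta^\plus$ construction as ``a union of $\leqtheta$-strategically-supported segments.'' That is precisely what fails: unlike genuine $\leqtheta$-closure, strategic closure gives lower bounds only for plays in which the second player has followed a single strategy throughout; a concatenation of separately-played segments is not such a play, and at a limit of segments (these occur cofinally in $\theta^\plus$) nothing guarantees a lower bound. Re-indexing a long chain to length $\leq\theta$ also does not help, since the re-indexed sequence was not produced by the strategy. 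The correct fix is available from the hypothesis you already have: superfriendliness in $M$ applies to \emph{every} $\gamma<j(\kappa)$, so apply it with $\gamma=\theta^\plus$ (note $(\theta^\plus)^M=(\theta^\plus)^V<j(\kappa)$ since $M^\theta\of M$ and $j(\kappa)$ is inaccessible in $M$). This yields $p$ such that $j(\P)\restrict p$ carries an $M$-strategy for the game of length $\theta^\plus+1$; the whole diagonalization is then a single play of that game, and $M^\theta\of M$ guarantees that every proper initial segment of the play (having size at most $\theta$) lies in $M$, so the strategy can be consulted in $V$ at every limit stage below $\theta^\plus$. With those two corrections --- $|\P|\leq\theta$ for the counting, and $\gamma=\theta^\plus$ for the strategy --- your argument becomes the paper's proof.
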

We can now solve several more of the test questions as
corollaries.

\begin{theorem}\label{Theorem.InnerModelSC+2^kappa=kappa+}
If there is a supercompact cardinal, then there is an inner
model with an indestructible supercompact cardinal $\kappa$
such that $2^\kappa=\kappa^\plus$, and another inner model
with an indestructible supercompact cardinal $\kappa$ such
that $2^\kappa=\kappa^\plusplus$. Thus, the answers to
Test Questions \ref{TestQuestion.Supercompact+2^kappa=kappa+},
\ref{TestQuestion.Supercompact+2^kappa>kappa+} and
\ref{TestQuestion.IndestructibleSC} are yes. Indeed, for
any cardinal $\theta$, such inner models $W$ can be found
for which also $W^\theta\of W$.
\end{theorem}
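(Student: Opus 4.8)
The plan is to obtain these inner models as corollaries of Theorem~\ref{Theorem.FriendlySC}, exactly as the paragraph preceding the statement suggests, by cooking up an appropriate $\ltkappa$-superfriendly forcing notion whose effect is both to make $\kappa$ indestructibly supercompact and to arrange the desired value of $2^\kappa$. For the first assertion, where we want $2^\kappa=\kappa^\plus$, I would take $\P$ to be the Laver preparation of $\kappa$ itself, which as noted in the proof of Theorem~\ref{Theorem.IndestructibleSCrobust} is $\ltkappa$-superfriendly (indeed below any $\gamma<\kappa$ one can opt via the lottery to delay the start past $\gamma$, yielding a $\leqgamma$-closed tail). The standard Laver preparation forces $\GCH$ at and above $\kappa$ while making $\kappa$ indestructibly supercompact, so in particular it forces $2^\kappa=\kappa^\plus$. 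Since $2^\kappa=\kappa^\plus$ and ``$\kappa$ is indestructibly supercompact'' are each expressible as single sentences forced by $\P$ over $V$, Theorem~\ref{Theorem.FriendlySC} hands us, for each prescribed $\theta$, an inner model $W$ satisfying both, with $W^\theta\of W$.

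For the second assertion, where we want $2^\kappa=\kappa^\plusplus$, the plan is to precede (or interleave) the Laver preparation with forcing that blows up the power set of $\kappa$ to $\kappa^\plusplus$. The natural choice is to follow the Laver preparation $\P_\ell$ with $\Add(\kappa,\kappa^\plusplus)$, or more carefully to use a product/iteration that both secures indestructibility and achieves $2^\kappa=\kappa^\plusplus$; one clean route is to use the Laver preparation and then, recognizing that in the resulting model $\kappa$ remains supercompact and hence $\ltkappa$-directed closed forcing preserves it, force with $\Add(\kappa,\kappa^\plusplus)$, which is itself $\ltkappa$-directed closed and therefore preserves the indestructible supercompactness while raising $2^\kappa$ to $\kappa^\plusplus$. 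The key point I must check is that the \emph{composite} forcing remains $\ltkappa$-superfriendly: below any $\gamma<\kappa$ the Laver component can be arranged to be $\leqgamma$-closed, and $\Add(\kappa,\kappa^\plusplus)$ is $\leqgamma$-closed for all $\gamma<\kappa$ outright (its conditions have size $<\kappa$), so the composite admits, below a suitable condition, a $\leqgamma$-strategically closed tail. Hence $\P$ is $\ltkappa$-superfriendly and Theorem~\ref{Theorem.FriendlySC} again applies, delivering for each $\theta$ an inner model $W$ with an indestructible supercompact $\kappa$, $2^\kappa=\kappa^\plusplus$, and $W^\theta\of W$.

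The main obstacle I anticipate is verifying superfriendliness of the composite forcing for the $2^\kappa=\kappa^\plusplus$ case with enough care that Theorem~\ref{Theorem.FriendlySC} genuinely applies: I must confirm that the lottery-style delay in the Laver component, combined with the Cohen forcing $\Add(\kappa,\kappa^\plusplus)$, still produces below each threshold $\gamma<\kappa$ a condition beneath which the restricted forcing is $\leqgamma$-strategically closed, and that the relevant sentences (indestructible supercompactness of $\kappa$ together with the exact value of $2^\kappa$) are all forced by $\one$. One delicate wrinkle is that the size of the forcing is now $\kappa^\plusplus$ rather than $\kappa$, so in applying Theorem~\ref{Theorem.FriendlySC} I should take the target cardinal $\theta$ large enough that $|\P|\leq\theta$, which the theorem's hypothesis permits since $\theta$ may be chosen as large as desired. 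Once superfriendliness and the ``$|\P|\leq\theta$'' bookkeeping are in hand, the conclusions---including $W^\theta\of W$---follow immediately from Theorem~\ref{Theorem.FriendlySC}, and the stated answers to Test Questions \ref{TestQuestion.Supercompact+2^kappa=kappa+}, \ref{TestQuestion.Supercompact+2^kappa>kappa+}, and \ref{TestQuestion.IndestructibleSC} are affirmative.
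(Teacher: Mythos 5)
Your overall strategy---package the lottery-delayed (superfriendly) Laver preparation together with Cohen forcing and invoke Theorem~\ref{Theorem.FriendlySC}---is exactly the paper's strategy, but both of your specific forcing choices rest on a false claim about the Laver preparation, and this is a genuine gap. The Laver preparation (in its original or lottery-delayed form) is a $\kappa$-c.c.\ iteration of size $\kappa$ whose nontrivial iterands all lie in $V_\kappa$; consequently it does not affect the value of $2^\kappa$ at all: counting nice names shows $(2^\kappa)^{V[G]}=(2^\kappa)^V$. So your assertion that the Laver preparation ``forces \GCH\ at and above $\kappa$, so in particular $2^\kappa=\kappa^\plus$'' fails whenever $2^\kappa>\kappa^\plus$ holds in $V$, which is entirely possible since the hypothesis is only that a supercompact cardinal exists. (Solovay's theorem gives \SCH\ above a supercompact cardinal, not \GCH, and says nothing about $2^\kappa$ itself.) The same problem infects your second model: after the Laver preparation, $\Add(\kappa,\kappa^\plusplus)$ forces $2^\kappa=\max\bigl((2^\kappa)^V,\kappa^\plusplus\bigr)$, which exceeds $\kappa^\plusplus$ if $(2^\kappa)^V>\kappa^\plusplus$. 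Note also that you cannot repair this by first passing to a forcing extension of $V$ in which \CH\ holds at $\kappa$, since an inner model of that extension need not be an inner model of $V$.

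The repair is the one the paper makes: after the superfriendly Laver preparation $\P$, force with $\Qdot=\dot\Add(\kappa^\plus,1)$, which is $\ltkappa$-directed closed (hence preserves both the indestructibility of $\kappa$ and the superfriendliness of the composite) and collapses $(2^\kappa)^V$ onto $\kappa^\plus$, thereby genuinely ensuring $2^\kappa=\kappa^\plus$; for the second model force with $\P*\Qdot*\Rdot$ where $\Rdot=\dot\Add(\kappa,\kappa^\plusplus)$, so that $2^\kappa$ is first brought down to $\kappa^\plus$ and only then pushed up to exactly $\kappa^\plusplus$. With these corrected posets your application of Theorem~\ref{Theorem.FriendlySC} goes through as you describe; and your worry about arranging $|\P|\leq\theta$ is harmless, since the theorem as stated imposes no such hypothesis (its proof chooses its own sufficiently large auxiliary cardinal internally), so the conclusion $W^\theta\of W$ is available for every $\theta$.
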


\begin{proof}
Let $\P$ be the $\ltkappa$-friendly version of the Laver
preparation used in Theorem
\ref{Theorem.InnerModelIndesctructibleSC}, which is easily
seen to be ${<}\kappa$-superfriendly, and let
$\Qdot=\dot\Add(\kappa^\plus,1)$ be the subsequent forcing
to ensure $2^\kappa=\kappa^\plus$. The combination
$\P*\Qdot$ remains $\ltkappa$-superfriendly, forces
$2^\kappa=\kappa^\plus$ and preserves the indestructible
supercompactness of $\kappa$. Thus, by either Theorem
\ref{Theorem.FriendlyStrC} or \ref{Theorem.FriendlySC},
there is an inner model satisfying this theory. Similarly,
if $\Rdot=\dot\Add(\kappa,\kappa^\plusplus)$, then
$\P*\Qdot*\Rdot$ is $\ltkappa$-superfriendly, preserves the
indestructible supercompactness of $\kappa$ and forces
$2^\kappa = \kappa^\plusplus$, so again there is an inner
model of the desired theory. The method of Theorem
\ref{Theorem.FriendlySC} will ensure in each case, for any
desired cardinal $\theta$, that the inner model $W$
satisfies $W^\theta\of W$.
\end{proof}

The proof admits myriad alternatives. For example, we could
have just as easily forced $2^\kappa=\kappa^\plusplusplus$,
or \GCH\ on a long block of cardinals at $\kappa$ and
above, or failures of this, in any definable pattern above
$\kappa$. If $\Q$ is any $\ltkappa$-directed closed
forcing, to be performed after the (superfriendly) Laver
preparation, then the combination $\P*\Qdot$ is
$\ltkappa$-superfriendly and preserves the indestructible
supercompactness of $\kappa$. Thus, any statement forced by
$\P*\Qdot$, using any parameter in $V_\kappa$, will be true
in the inner models $W$ arising in Theorems
\ref{Theorem.FriendlyStrC} and \ref{Theorem.FriendlySC}.
See also Theorem \ref{Theorem.HOD^W} for an application of
this method.

We now apply these methods to the family of questions
surrounding Test Questions \ref{TestQuestion.SC+V=HOD} and
\ref{TestQuestion.SC+HOD^W}. The following theorem answers
Test Question \ref{TestQuestion.SC+HOD^W} and several of its
variants, but not Test Question \ref{TestQuestion.SC+V=HOD}.
With our third proof method in a later section, we will
deduce the full conclusion of Test Question
\ref{TestQuestion.SC+V=HOD} using a slightly stronger
hypothesis.

\begin{theorem} \ \label{Theorem.HOD^W}
 \begin{enumerate}
 \item If $\kappa$ is strongly compact, then there is
     an inner model $W$ with a strongly compact
     cardinal, such that $H_{\kappa^\plus}^V\of
     \HOD^W$. If the \GCH\ holds below $\kappa$, then
     for any $A\in H_{\kappa^\plus}^V$, one can arrange
     that $A$ is definable in $W$ without parameters.
 \item If $\kappa$ is measurable and
     $2^\kappa=\kappa^\plus$, then there is an inner
     model $W$ with a measurable cardinal, such that
     $H_{\kappa^\plus}^V\of\HOD^W$. If the \GCH\ holds
     below $\kappa$, then for any $A\in
     H_{\kappa^\plus}^V$, one can arrange that $A$ is
     definable in $W$ without parameters.
 \item If $\kappa$ is supercompact, then for every
     cardinal $\theta$ and every set $A\in
     H_{\theta^\plus}^V$, there is an inner model $W$
     with a supercompact cardinal in which $A\in
     \HOD^W$ and $W^\theta\of W$. If the \GCH\ holds
     below $\kappa$, then one can arrange that $A$ is
     definable in $W$ without parameters.
 \end{enumerate}
\end{theorem}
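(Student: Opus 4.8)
The common thread is to record the sets we care about in the continuum function and then exploit that this function is definable without parameters, so that anything it encodes becomes ordinal definable in every model agreeing with it on the relevant block. In each case the inner model $W$ arises as a forcing extension $M[G]$ of the target $M$ of a suitable embedding $j\colon V\to M$, where the generic $G$ for a highly closed coding poset is manufactured inside $V$; the coding poset adds no small subsets, so that it is friendly and preserves the relevant large cardinal. The three parts use the three embeddings available: the Boolean ultrapower of Theorem~\ref{Theorem.FriendlyStrCemb} in the strongly compact case, a measurable ultrapower in the measurable case, and a supercompactness ultrapower, handled as in Theorem~\ref{Theorem.IndestructibleSCrobust}, in the supercompact case.

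For part~(1), take the Boolean ultrapower $j\colon V\to\Vbar$ with $\cp(j)=\kappa$ and $\Vbar$-generic $G\of j(\P)$ lying in $V$, where $\P$ is the $\leqkappa$-closed poset that codes every set in $\power(\kappa)^V$ into the continuum function on a definable block of cardinals above $\kappa$. Because $\cp(j)=\kappa$, each $A\of\kappa$ satisfies $A=j(A)\intersect\kappa\in\Vbar$, and $\power(\kappa)^V\of\power(j(\kappa))^{\Vbar}$; since $j(\P)$ faithfully records all of $\power(j(\kappa))^{\Vbar}$ in the continuum function of $W=\Vbar[G]$, every ground model subset of $\kappa$---and hence, via coding, every element of $H_{\kappa^\plus}^V$---is ordinal definable there. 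That $j(\kappa)$ remains strongly compact in $W$ I would check by lifting strong compactness embeddings through $j(\P)$, using that its conditions have size below $j(\kappa)$ to build the needed master conditions. Part~(2) is parallel but direct: with $j\colon V\to M=\Ult(V,\mu)$ we have $M^\kappa\of M$, so $\power(\kappa)^M=\power(\kappa)^V$, and I would code this in $M$ by a $\leqkappa$-closed poset of size $\kappa^\plus$ below $j(\kappa)$, small enough to leave $j(\kappa)$ measurable. Here the hypothesis $2^\kappa=\kappa^\plus$ is used exactly to bound the dense subsets of the poset lying in $M$, counted in $V$, by $\kappa^\plus$; as every ordinal below $\kappa^\plus$ has cofinality at most $\kappa$, the closure of the poset lets me diagonalize against them to produce an $M$-generic $G\in V$.

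Part~(3) is where the coding must be performed directly in the target model. Fix a $\theta$-supercompactness embedding $j\colon V\to M$, so $M^\theta\of M$, and code the given $A\in H_{\theta^\plus}^V$ by a subset $\bar A\of\theta$, noting $\bar A\in M$. In $M$ I would force the Laver preparation of $j(\kappa)$ followed by a ${<}j(\kappa)$-directed closed poset on cardinals above $j(\kappa)$ recording $\bar A$ in the continuum function; since $\bar A\in M$ is used as the parameter defining this poset, it is $\bar A$ itself, not $j(\bar A)$, that is encoded, so $A$ is ordinal definable in $W=M[G]$. As in Theorem~\ref{Theorem.IndestructibleSCrobust}, the dense subsets in $M$ are bounded by $\theta^\plus$ (using Solovay's theorem on the \SCH\ above $\kappa$) and the closure of the forcing lets me build the $M$-generic $G$ inside $V$; then $j(\kappa)$ is indestructibly supercompact in $W$, $W^\theta\of W$, and $A\in\HOD^W$.

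In every part the $\GCH$-below-$\kappa$ clause upgrades ordinal definability to outright definability: preserving the $\GCH$ below the large cardinal makes that cardinal, and hence the coding block above it, definable in $W$ as the least cardinal at which the $\GCH$ first fails, so a single set coded at a canonical initial position becomes definable without parameters. The main obstacle I expect is making the coding faithful while keeping it friendly and large-cardinal preserving: the content of the coded set must reach $W$ uncorrupted, which is precisely why one codes beneath the critical point in parts~(1) and~(2) and directly in $M$ in part~(3). This faithfulness then has to be reconciled with the lifting arguments that keep the large cardinal and with the dense-set counts that place the generic inside $V$.
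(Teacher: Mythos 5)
Your part (2) is essentially the paper's own argument (code $P(\kappa)^V=P(\kappa)^M$ inside $M$ below $j(\kappa)$, preserve measurability of $j(\kappa)$ by L\'evy--Solovay, count dense sets using $2^\kappa=\kappa^\plus$, and diagonalize) and is correct. The serious gap is in part (1). Your $\P$ codes all of $P(\kappa)^V$ into the continuum function \emph{above} $\kappa$, so it is a poset of size at least $2^\kappa$, and for Theorem \ref{Theorem.FriendlyStrC} to put a strongly compact cardinal into $W$ you must prove that $\P$ forces ``$\kappa$ is strongly compact'' over $V$. Your plan --- ``lifting strong compactness embeddings through $j(\P)$ \dots\ to build the needed master conditions'' --- cannot work: master-condition arguments need the target model to contain $j\image G$ as a directed set inside a sufficiently closed model, which supercompactness embeddings provide but strong compactness embeddings do not; a strong compactness embedding gives only a covering set $s\supseteq j\image G$ with $|s|^M<j(\kappa)$, which is not directed and yields no condition. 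Nor is the preservation simply true: there is no $\ZFC$ theorem that ${\leq}\kappa$-closed forcing above $\kappa$ preserves strong compactness, and by the Hamkins--Shelah superdestructibility theorem it can consistently fail (after any small forcing, nontrivial ${<}\kappa$-closed forcing destroys even the strong compactness of $\kappa$). This is exactly the difficulty the paper's proof is engineered to dodge: there $\P$ is the lottery sum $\oplus\{\Q_\gamma\mid\gamma<\kappa\}$ of codings of $P(\gamma)$ for $\gamma<\kappa$, each \emph{small} relative to $\kappa$, so preservation is free by L\'evy--Solovay; the coding of $P(\kappa)^V$ nevertheless happens because the $\Vbar$-generic $G\of j(\P)$ lying in $V$ must opt for a lottery value $\gamma\geq\kappa$, since $V_\kappa=\Vbar_\kappa$ makes a $\Vbar$-generic for any forcing of size below $\kappa$ fully $V$-generic, which cannot exist in $V$. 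Missing this lottery maneuver is missing the key idea of statement (1).

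In part (3) your route (Laver preparation of $j(\kappa)$ in $M$, then ${<}j(\kappa)$-directed closed coding of $\bar A$ above $j(\kappa)$) can be pushed through for the main clause --- it is the method the paper itself gestures at after Theorem \ref{Theorem.InnerModelSC+2^kappa=kappa+} --- but only if you say explicitly that the preparation is delayed past $\theta$ (an undelayed Laver preparation is not $\leqtheta$-closed, and then no $M$-generic filter exists in $V$ at all), and your dense-set count must bound $|j(2^\theta)|$, not just $|j(2^\kappa)|$, by $\theta^\plus$, since your coding lives above $j(\kappa)$; the paper's proof is simpler, taking the condition of $j(\P)$ that codes $P(\theta)^M$ on a block \emph{between} $\theta$ and $j(\kappa)$, so that the forcing is small relative to $j(\kappa)$ in $M$ and L\'evy--Solovay preserves supercompactness with no preparation at all. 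Where your part (3) genuinely breaks is the definability clause: the Laver preparation forces with arbitrary directed closed posets at stages below $j(\kappa)$ and so destroys the $\GCH$ there, so ``the least cardinal at which the $\GCH$ fails'' no longer locates your coding block. The same clause also fails in your part (1), for a different reason: the generic $G$ is not freely chosen but is determined by the Boolean ultrafilter, and with your non-lottery forcing the first set coded is $j(\hat A)$, not $\hat A$, while $\kappa$ is not definable in $W$, so $\hat A=j(\hat A)\intersect\kappa$ cannot be extracted. The paper needs real additional machinery here: the lottery also chooses the enumeration, the coding is arranged to begin at the successor of a cardinal of cofinality $\kappa$ (which makes $\kappa$, hence $\hat A$, definable in $W$), and the ultrafilter of Theorem \ref{Theorem.FriendlyStrC} is steered by taking the seed below a condition forcing these statements.
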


\noindent In particular, by Theorem \ref{Theorem.HOD^W}(3),
the answer to Test Question \ref{TestQuestion.SC+HOD^W} is yes.

\begin{proof}
For Statement (1), we use the Boolean ultrapower method of
Theorems \ref{Theorem.FriendlyStrC} and
\ref{Theorem.FriendlyStrCemb}. For $\gamma<\kappa$, let
$\Q_\gamma$ be the poset that codes $P(\gamma)$ into the
\GCH\ pattern on a block of cardinals above $\gamma$, using
$\leqgamma$-closed forcing. Let $\P$ be the lottery sum
$\oplus\{ \Q_\gamma \st \gamma<\kappa \}$, which is
$\ltkappa$-superfriendly and therefore $\ltkappa$-friendly.
Since each $\Q_\gamma$ is small relative to $\kappa$, it
follows by the results of \cite{LS67} that $\kappa$ remains
strongly compact after forcing with $\P$. By Theorem
\ref{Theorem.FriendlyStrCemb}, there is an embedding
$j:V\to\Vbar$ into an inner model $\Vbar$ with
$\kappa\leq\cp(j)$, and in $V$, there is a $\Vbar$-generic
filter $G\of j(\P)$. In particular, $V_\kappa =
\Vbar_\kappa$. Consequently, the ordinal $\gamma$ selected
by $G$ in the lottery $j(\P)$ must be at least $\kappa$.
Thus, $G$ is coding $P(\gamma)$ and hence also $P(\kappa)$
into the continuum function in some interval between
$\kappa$ and $j(\kappa)$. So in the inner model
$W=\Vbar[G]$, we have a strongly compact cardinal
$j(\kappa)$, as well as $P(\kappa)^V\of \HOD^W$ and hence
$H_{\kappa^\plus}^V\of\HOD^W$, as desired.

For the second part of Statement (1), in the case that the
\GCH\ holds below $\kappa$ in $V$, consider any $A\in
H_{\kappa^\plus}$. Let $\hat A\of\kappa$ be a subset of
$\kappa$ coding $A$ in some canonical way. Let $\P$ be the
forcing as in the previous paragraph, but modified so that
the lottery also may choose the order in which the sets in
$P(\gamma)$ are coded.
First, we argue that we may assume that $G$ opts for a
poset in $j(\P)$ that begins coding at $\gamma$, which is
the successor of a cardinal of cofinality $\kappa$, and
that $\hat A$ is the first set to be coded. Note that the
statement $\varphi$ that $G$ makes such a choice is
expressible as an assertion in the forcing language, and
so, following the proof of Theorem
\ref{Theorem.FriendlyStrC}, it suffices to obtain an
ultrafilter $U$ for the complete Boolean algebra $\B$
corresponding to the poset $\P$ containing the Boolean
value of $\varphi$. Fixing a strong compactness embedding
$h$, we obtain $U$ precisely as in the proof of Theorem
\ref{Theorem.FriendlyStrC}, only making sure that the
condition $p\in h(\B)$ chosen to witness the friendliness
of $h(\B)$ for $\lambda=|s|^M<j(\kappa)$, where $j\image
\B\of s$, forces the statement $\varphi$ with $h$ applied
to the parameters. Now observe that in $W=\Vbar[G]$, the
cardinal $\gamma$ is definable as the cardinal up to which
the \GCH\ holds, since in $\Vbar$ the \GCH\ holds below
$j(\kappa)$ by elementarity. It follows that $\kappa$ is
definable as the cofinality of the predecessor of $\gamma$,
and so $\hat A$ and hence $A$ are definable in $W$ without
parameters. As in the previous paragraph, we also have a
strongly compact cardinal in $W$ and $H_{\theta^\plus}^V\of
W$.

For Statement (2), suppose that $\kappa$ is measurable and
$2^\kappa=\kappa^\plus$. We follow the method of Theorem
\ref{Theorem.IndestructibleSCrobust}. Let $j:V\to M$ be the
ultrapower by any normal measure on $\kappa$. Let $\P$ be
the forcing used to prove the second part of Statement (1),
which by lottery selects some $\gamma<\kappa$ and an
enumeration of $P(\gamma)$, which is then coded into the
\GCH\ pattern above $\gamma$. In the forcing $j(\P)$,
consider a condition $p$ that opts to code $P(\kappa)$.
Thus, $j(\P)\restrict p$ is $\leqkappa$-closed and has size
less than $j(\kappa)$. Since $2^\kappa=\kappa^\plus$, the
number of subsets of $j(\P)$ in $M$, counted in $V$, is
bounded by $|j(2^\kappa)|^V\leq
(2^\kappa)^\kappa=\kappa^\plus$. Thus, by diagonalization,
we may construct in $V$ an $M$-generic filter $G\of j(\P)$
below $p$. Let $W=M[G]$. By the results of \cite{LS67}, the
cardinal $j(\kappa)$ remains measurable in $W$, since below
$p$ the forcing was small relative to $j(\kappa)$. In
addition, every set in $P(\kappa)^V=P(\kappa)^M$ is coded
into the continuum function of $W$, so
$H_{\kappa^\plus}^V\of\HOD^W$, as desired. If the \GCH\
holds below $\kappa$, then it holds below $j(\kappa)$ in
$M$, and so we can define $\kappa$ in $W$ as the cardinal
up to which the \GCH\ holds, and hence define the first set
that is coded without parameters.

For Statement (3), where $\kappa$ is supercompact, we may
use the same argument as in Statements (1) and (2), but
employing the method of Theorem
\ref{Theorem.IndestructibleSCrobust}. Let $j:V\to M$ be a
$\theta$-supercompactness embedding, so that in particular
$M^\theta\of M$ and $\theta<j(\kappa)$. Let $\P$ be the
forcing from Statement (2), and in the forcing $j(\P)$,
consider a condition $p$ that opts to code $P(\theta)$.
Note that $j(\P)\restrict p$ is $\leq\theta$-closed. By the
proof of Theorem \ref{Theorem.IndestructibleSCrobust}, we
know that $V$ has an $M$-generic filter $G$ containing $p$.
Let $W=M[G]$, and note that $j(\kappa)$ remains
supercompact in $W$ by \cite{LS67}. Since $M^\theta\of M$,
it follows that $W^\theta\of W$ and
$H_{\theta^\plus}^V=H_{\theta^\plus}^M\of\HOD^W$, as
desired. If the \GCH\ holds below $\kappa$, then it holds
below $j(\kappa)$ in $M$, and so we can define $\theta$ in
$W$ as the cardinal up to which the \GCH\ holds and hence
define the first set that is coded without parameters.
\end{proof}

Let us highlight the consequences of this theorem with a
quick example. Namely, suppose that $\kappa$ is strongly
compact in $V$ and the \GCH\ holds. Both of these
statements remain true in the forcing extension $V[c]$
obtained by adding a single $V$-generic Cohen real $c$.
Since this forcing is almost homogeneous, we know $c$ is
not in $\HOD^{V[c]}$.
Nevertheless, by Theorem \ref{Theorem.HOD^W}, there are
inner models $W\of V[c]$ such that $H^V_{\kappa^+}
\subseteq {\rm HOD}^W$, with a strongly compact cardinal,
in which $c$ is definable without parameters!

Our first two proof methods were able to answer several of
the test questions with the provably optimal hypothesis
and, moreover, while also producing inner models with some
nice features, such as $W^\theta\of W$ for any desired
$\theta$. Nevertheless, and perhaps as a consequence, these
methods seem unable to produce inner models in which the
full \GCH\ holds, say, if the \CH\ fails in $V$, because
the resulting inner models for those methods will agree
with $V$ up to $V_\kappa$ and beyond, where $\kappa$ is the
initial supercompact cardinal. Similarly, neither method
seems able to produce an inner model in which the \rm{PFA}
holds, since the only known forcing to attain this---a long
countable support iteration of proper forcing---adds Cohen
reals unboundedly often and is therefore highly
non-friendly. Furthermore, the methods seem not easily to
accommodate class forcing, and allow us only to put
particular sets $A$ into $\HOD^W$ for an inner model $W$,
without having $W$ fully satisfy $V=\HOD$. Therefore, these
methods seem unable to answer Test Questions
\ref{TestQuestion.Supercompact+GCH}, \ref{TestQuestion.PFA}
and \ref{TestQuestion.SC+V=HOD}. (With our third proof
method, we shall give partial answers to these questions
in Theorems \ref{Theorem.InnerModelPFA} and
\ref{Theorem.InnerModelSC+GCH}, by using a stronger
hypothesis.) Another unusual feature of our first two
methods, as used in Theorems
\ref{Theorem.InnerModelIndesctructibleSC},
\ref{Theorem.IndestructibleSCrobust} and
\ref{Theorem.InnerModelSC+2^kappa=kappa+}, is that it is
not the same supercompact cardinal $\kappa$ that is found
to be supercompact in the desired inner model. Rather, it
is in each case the ordinal $j(\kappa)$ that is found to be
supercompact (and indestructible or with fragments or
failures of the \GCH) in an inner model. A modified version
of Test Question \ref{TestQuestion.IndestructibleSC} could ask,
after all, whether every supercompact cardinal $\kappa$ is
itself indestructibly supercompact in an inner model. For
precisely this question, we don't know, but if $\kappa$ is
supercompact up to a weakly iterable cardinal above
$\kappa$, then the answer is yes by Theorem
\ref{Theorem.InnerModelSCiterable}. (See Section
\ref{sec:iterability} for the definition of weakly iterable
cardinal.)

So let us now turn to the third method of proof, which will
address these concerns, at the price of an additional large
cardinal hypothesis. We shall use this method to produce an
inner model with a supercompact cardinal and the full \GCH,
an inner model of the \PFA\ and an inner model where
$\kappa$ itself is indestructibly supercompact, among other
possibilities. The method is very similar to the methods
introduced and fruitfully applied by Sy Friedman
\cite{Friedman2006:InternalConsistencyAndIMH} and by Sy
Friedman and Natasha Dobrinen
\cite{DobrinenFriedman2008:InternalConsistencyGlobalCoStationarity},
\cite{DobrinenFriedman:ConsistencyStrengthOfTreePropertyAtDoubleSuccessorMeasurable},
where they construct class generic filters in $V$ over an
inner model $W$. Also Ralf Schindler, in a personal
communication with the third author, used a version of the
method to provide an answer to Test Question
\ref{TestQuestion.PFA}, observing that if there is a
supercompact cardinal with a measurable cardinal above it,
then there is an inner model of the \PFA.

\begin{theorem}[Schindler]\label{Theorem.SC+MeasurableAbove}
If there is a supercompact cardinal with a measurable
cardinal above it, then there is an inner model of the
PFA.
\end{theorem}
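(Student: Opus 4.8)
The plan is to obtain $\PFA$ not by forcing over $V$ but, in the Friedman style of the third method, by exhibiting a thin inner model $W\of V$ together with a $W$-generic filter that already lies in $V$. Let $\kappa$ be the supercompact and $\lambda>\kappa$ the measurable, with $U$ a normal measure on $\lambda$, and let $\P$ denote Baumgartner's countable-support iteration of length $\kappa$, built from a Laver function for $\kappa$, which forces $\PFA$ by \cite{baumgartner:pfa}. Note $|\P|=\kappa<\lambda$, so $\P\in V_\lambda$; since $\lambda$ is inaccessible, $V_\lambda\satisfies\ZFC+{}$``$\kappa$ is supercompact'', and hence $V_\lambda\satisfies$``$\P$ forces $\PFA$''. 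A single ultrapower $\Ult(V,U)$ will not suffice here, since it has exactly the same subsets of the small forcing $\P$ as $V$ does; the measurable must instead be used to manufacture a genuinely thinner model with an iterability guarantee.

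First I would pass to a countable hull. Choose a countable $X\elesub(V_{\lambda+1},\in,U)$ with $\kappa,\lambda,\P\in X$, and let $\bar X=(\bar V_{\bar\lambda+1},\in,\bar U)$ be its transitive collapse, with $\bar\kappa,\bar\lambda,\bar\P,\bar U$ the images of $\kappa,\lambda,\P,U$. Then $\bar V_{\bar\lambda}\satisfies\ZFC+{}$``$\bar\kappa$ is supercompact'' and ``$\bar\P$ forces $\PFA$'', while $\bar U$ is the collapse of a genuine countably complete measure. Because $\bar X$ is countable, $\bar\P$ has only countably many dense subsets in $\bar X$, so I can construct by hand a descending $\omega$-chain meeting each of them and thereby obtain an $\bar X$-generic filter $\bar G\of\bar\P$ with $\bar G\in V$.

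Next I would inflate this countable picture to a proper-class inner model by iterating. Iterate $\bar X$ by $\bar U$ and its images through all the ordinals, with iteration maps $i_{\alpha\beta}$ and critical sequence $\bar\lambda=\lambda_0<\lambda_1<\cdots$, and let $W$ be the transitive collapse of the direct limit of the underlying $\ZFC$-models $\bar V_{\lambda_\alpha}$. The essential use of the measurable is that $\bar U$ is (the collapse of) a real measure, which guarantees that every iterate, and hence the $\Ord$-length direct limit, is well-founded; as the critical sequence is cofinal in $\Ord$, the model $W$ is a transitive proper-class model of $\ZFC$. Crucially $\bar\kappa<\bar\lambda=\cp(i_{0\infty})$, so $i_{0\infty}$ fixes $\bar\kappa$ and $\bar\P$; by the standard agreement below a critical point, $V_{\bar\lambda}^{W}=V_{\bar\lambda}^{\bar X}$, whence $\P^W=\bar\P$ and $\power(\bar\P)^W=\power(\bar\P)^{\bar X}$. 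Since $i_{0\infty}(\bar V_{\bar\lambda})=W$ and $i_{0\infty}(\bar\kappa)=\bar\kappa$, $i_{0\infty}(\bar\P)=\bar\P$, elementarity gives $W\satisfies\ZFC+{}$``$\bar\kappa$ is supercompact'' $+{}$``$\P^W$ forces $\PFA$''.

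Finally, because $W$ and $\bar X$ have exactly the same subsets of $\bar\P$, the filter $\bar G$ meets every dense subset of $\P^W$ lying in $W$, so $\bar G$ is $W$-generic. Then $W[\bar G]$ is transitive, contains all the ordinals, and models $\ZFC$, so it is an inner model, and $W[\bar G]\satisfies\PFA$, as desired. The hard part will be the iterability bookkeeping of the third paragraph: verifying that the $\Ord$-length iteration of the collapsed structure is well-founded and yields a genuine inner model of full $\ZFC$ in which $\bar\kappa$ is \emph{fully} (not merely boundedly) supercompact, and confirming the agreement $\power(\bar\P)^W=\power(\bar\P)^{\bar X}$ that makes $\bar G$ generic over the class model. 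This is precisely the point at which the measurable cardinal above $\kappa$ is indispensable.
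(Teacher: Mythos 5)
Your proposal is correct and is essentially the paper's own (Schindler's) argument: take a countable elementary hull of the supercompact-below-measurable configuration, collapse it, iterate the collapsed measure through all the ordinals to produce a proper-class inner model that agrees with the collapsed structure below the critical point, and then meet the countably many dense sets of the (now countable-in-$V$) Baumgartner poset to build the generic. The only cosmetic differences are that you hull $(V_{\lambda+1},\in,U)$ rather than $V_\theta$ for $\theta\gg\lambda$ (where $H_{\lambda^+}$, being a $\ZFC^-$ model, is the cleaner domain for the iteration machinery) and that you construct the generic filter before iterating rather than after, which is equivalent since genericity over $W$ reduces to genericity over the countable structure via the agreement $\power(\bar\P)^W=\power(\bar\P)^{\bar X}$.
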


The basic idea is that if $\kappa$ is supercompact and
$\kappa<\delta$ for some measurable cardinal $\delta$, then
one finds a countable elementary substructure $X\elesub
V_\theta$, with $\delta\ll\theta$, whose Mostowski collapse
is a countable iterable structure with a supercompact
cardinal $\kappa_0$ below a measurable cardinal $\delta_0$.
By iterating the measurable cardinal $\delta_0$ of this
structure out of the universe, one arrives at a full inner
model $M$, and because $\kappa_0$ was below the critical
point of the iteration, which is $\delta_0$, it follows
that both $\kappa_0$ and even $P(\kappa_0)^M$ are countable
in $V$. Thus, by the usual diagonalization in $V$, there is
an $M$-generic filter $G$ for the Baumgartner \PFA\ forcing
(or whatever other forcing was desired), and so $M[G]$ is
the desired inner model. This method generalizes to {\it
any} forcing notion below a measurable cardinal.

In the subsequent sections of this article, we shall
elaborate on the details of this argument, while also
explaining how to reduce the hypothesis from a measurable
cardinal above the supercompact cardinal to merely a weakly
iterable cardinal above. The construction encounters a few
complications in the class-length forcing iterations, since
(unlike the argument above) these iterations will be
stretched to proper class size during the iteration, and so
one cannot quite so easily produce the desired $M$-generic
filter. Nevertheless, the new method remains fundamentally
similar to the argument we described in the previous
paragraph. Finally, we shall give several additional
applications of the method.

\section{Iterable Structures}\label{sec:iterability}

We now develop some basic facts about iterable structures,
which shall be sufficient to carry out the third proof
method. In particular, we shall review the fact that any
structure elementarily embedding into an iterable structure
is itself iterable, and for a special class of forcing
required in later arguments, we shall give sufficient
conditions for a forcing extension of a countable iterable
structure to remain iterable.

Consider structures of the form  $\<M,\delta,U>$ where
$M\models\ZFC^{-}$ is transitive, $\delta$ is a cardinal in
$M$, and $U\subseteq\power(\delta)^M$. The set $U$ is an
\emph{$M$-ultrafilter}, if $\<M,\delta,U>\models`` U$ is a
normal ultrafilter''. An $M$-ultrafilter $U$ is
\emph{weakly anemable} if $U\cap A\in M$ for every set $A$
of size $\delta$ in $M$. By using only the equivalence
classes of functions in $M$, an $M$-ultrafilter suffices
for the usual ultrapower construction. It is easy to see
that $U$ is weakly amenable exactly when $M$ and the
ultrapower of $M$ by $U$ have the same subsets of $\delta$,
that is the ultrapower embedding is $\delta$-powerset
preserving. In this case, it turns out that one can define
the iterated ultrapowers of $M$ by $U$ to any desired
ordinal length. We say that $\<M,\delta,U>$ is
\emph{iterable} if $U$ is a weakly amenable $M$-ultrafilter
and all of these resulting iterated ultrapowers are
well-founded.

\begin{definition}
A cardinal $\delta$ is \emph{weakly iterable} if there is
an iterable structure $\<M,\delta,U>$ containing $V_\delta$
as an element.
\end{definition}

It is easy to see that measurable cardinals are weakly
iterable. Ramsey cardinals also are weakly iterable, since
if $\delta$ is Ramsey, every $A\subseteq\delta$ is an
element of an iterable structure $\<M,\delta, U>$ (see
\cite{mitchell:ramsey}) and so there is an iterable
structure containing a subset of $\delta$ that Mostowski
collapses to $V_\delta$.  On the other hand, a weakly
iterable cardinal need not even be regular. For example,
every measurable cardinal remains weakly iterable after
Prikry forcing, because the ground model iterable
structures still exist. More generally, we claim that the
least weakly iterable cardinal must have cofinality
$\omega$. To see this, suppose that $\delta$ is a weakly
iterable cardinal of uncountable cofinality with the
iterable structure $\<M,\delta, U>$. We shall argue that
there is a smaller weakly iterable cardinal of cofinality
$\omega$. Choose $X_0\prec M$ for some countable $X_0$
containing $\delta$, and let $\gamma_0=\text{sup}(X_0\cap
\delta)$. Inductively define $X_{n+1}\prec M$ with
$\gamma_{n+1}=\text{sup}(X_{n+1}\cap \delta)<\delta$
satisfying $V_{\gamma_n+1}\subseteq X_{n+1}$ and
$|X_{n+1}|<\delta$. This is possible since $\delta$ is
inaccessible in $M$, so the witnesses we need to add to
$X_{n+1}$ below $\delta$ will be bounded below $\delta$,
even if $\delta$ may be singular in $V$. Observe that if
$X_\omega=\bigcup_{n\in\omega}X_n$ and $\<N,\gamma,W>$ is
the collapse of the structure $\<X_\omega,\delta, U\cap
X_\omega>$, then $\delta$ collapses to $\gamma=\sup_n
\gamma_n$ and so $V_\gamma\in N$. The iterability of
$\<N,\gamma,W>$ will follow from Lemma
\ref{le:substructure} below, completing the argument that
$\gamma$ is a weakly iterable cardinal of cofinality
$\omega$ below $\delta$.

If $\delta$ is weakly iterable with the iterable structure
$\<M,\delta,U>$, then $\delta$ is at least ineffable in $M$
and therefore, the existence of weakly iterable cardinals
carries at least this large cardinal strength (see
\cite{gitman:ramsey}). In fact, weakly iterable cardinals
cannot exist in $L$ (see \cite{gitman:welch}), but it
follows from \cite{welch:unfoldables} that they are weaker
than an $\omega_1$-Erd\H{o}s cardinal. Note that the
inaccessibility of $\delta$ in the domain of the iterable
structure witnessing its weak iterability implies that it
is a $\beth$-fixed point and $V_\delta\models$ ZFC, by the
absoluteness of satisfaction.

\begin{lemma}\label{le:substructure}
Suppose $\<M,\delta,U>$ is iterable. Suppose further that
$\< N,\gamma,W>$ is a structure for which there exists an
elementary embedding $\rho:N\to M$ in the language
$\{\in\}$ with $\rho(\gamma)=\delta$ and the additional
property that whenever $x\in N$ is such that $x \subseteq
W$, then $\rho(x)\subseteq U$. Then $\<N,\gamma,W>$ is
iterable as well.
\end{lemma}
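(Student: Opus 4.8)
The plan is to verify the two requirements of iterability for $\<N,\gamma,W>$ separately: that $W$ is a weakly amenable $N$-ultrafilter (needed even to define the iteration), and that every iterated ultrapower of $\<N,\gamma,W>$ is well-founded. For the first, I would analyze how $\rho$ interacts with the predicates. Applying the pushforward property to the singleton $\{A\}$ gives $A\in W\Rightarrow\rho(A)\in U$ for each $A\in\power(\gamma)^N$, using that $\rho\image\set{A}=\set{\rho(A)}$ by $\{\in\}$-elementarity; and conversely, since $W$ is an $N$-ultrafilter while $U$ is a proper filter, $\rho(A)\in U$ forces $A\in W$ (otherwise $\gamma\setminus A\in W$, whence $\delta\setminus\rho(A)=\rho(\gamma\setminus A)\in U$, contradicting $\rho(A)\in U$). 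Thus $A\in W\iff\rho(A)\in U$, so $\rho$ reflects the predicate exactly, and the weak amenability of $W$ is inherited from that of $U$ together with $\rho(\gamma)=\delta$.

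The heart of the argument is to transfer well-foundedness from the iterates of $\<M,\delta,U>$ to those of $\<N,\gamma,W>$ by building a commuting family of embeddings between them. Let $\<M_\alpha,\delta_\alpha,U_\alpha>$, with iteration maps $i_{\alpha\beta}:M_\alpha\to M_\beta$, be the iterated ultrapowers of $\<M,\delta,U>$, all well-founded by the iterability hypothesis, and let $\<N_\alpha,\gamma_\alpha,W_\alpha>$, with maps $h_{\alpha\beta}:N_\alpha\to N_\beta$, be the corresponding formal iterated ultrapowers of $\<N,\gamma,W>$, which we do not yet know to be well-founded. I would construct by recursion on $\alpha$ embeddings $\rho_\alpha:N_\alpha\to M_\alpha$ that are $\{\in\}$-elementary, satisfy $\rho_\alpha(\gamma_\alpha)=\delta_\alpha$, obey the pushforward property $x\of W_\alpha\Rightarrow\rho_\alpha(x)\of U_\alpha$, and cohere via $\rho_\beta\circ h_{\alpha\beta}=i_{\alpha\beta}\circ\rho_\alpha$. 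Setting $\rho_0=\rho$ handles the base case, and exactly as in the preliminary step each $\rho_\alpha$ will then satisfy the biconditional $E\in W_\alpha\iff\rho_\alpha(E)\in U_\alpha$ for $E\in\power(\gamma_\alpha)^{N_\alpha}$, since by \L os' theorem (for the predicate, using weak amenability) $W_\alpha$ remains an $N_\alpha$-ultrafilter at every stage.

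At a successor stage I would set $\rho_{\alpha+1}([f]_{W_\alpha})=[\rho_\alpha(f)]_{U_\alpha}$, where $\rho_\alpha(f):\delta_\alpha\to M_\alpha$ is the image function. Well-definedness and $\{\in\}$-elementarity both reduce, by \L os' theorem on each side, to the fact that $\rho_\alpha$ sends each relevant agreement or membership set $E\in\power(\gamma_\alpha)^{N_\alpha}$ to its counterpart $\rho_\alpha(E)$, together with the biconditional $E\in W_\alpha\iff\rho_\alpha(E)\in U_\alpha$. To see that the pushforward property is maintained, suppose $x=[f]_{W_\alpha}\of W_{\alpha+1}$; by \L os' theorem the set $E=\set{\xi<\gamma_\alpha\st f(\xi)\of W_\alpha}$ lies in $W_\alpha$, and then the single-set pushforward applied to the union $y=\Union\set{f(\xi)\st\xi\in E}\in N_\alpha$, whose image $\rho_\alpha(y)=\Union\set{\rho_\alpha(f)(\eta)\st\eta\in\rho_\alpha(E)}$ is computed by $\{\in\}$-elementarity, shows $\rho_\alpha(f)(\eta)\of U_\alpha$ for every $\eta\in\rho_\alpha(E)$; since $\rho_\alpha(E)\in U_\alpha$, a final application of \L os' theorem gives $\rho_{\alpha+1}(x)\of U_{\alpha+1}$. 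At a limit stage $\lambda$ I would let $\rho_\lambda$ be the map induced on the direct limit by coherence, namely $\rho_\lambda(h_{\alpha\lambda}(z))=i_{\alpha\lambda}(\rho_\alpha(z))$; $\{\in\}$-elementarity, the value on $\gamma_\lambda$, and the pushforward property all pass to the direct limit, the last because $h_{\alpha\lambda}$ and $i_{\alpha\lambda}$ respect the predicates.

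With the $\rho_\alpha$ in hand, each $N_\alpha$ embeds into the well-founded $M_\alpha$ by an $\{\in\}$-elementary, hence $\in$-preserving, map, so each $N_\alpha$ is well-founded; this is precisely the iterability of $\<N,\gamma,W>$. I expect the main obstacle to be the successor step, specifically the verification that the pushforward coherence survives the ultrapower: the union trick above is what allows the single-set property $x\of W_\alpha\Rightarrow\rho_\alpha(x)\of U_\alpha$ to propagate to \emph{all} of $\rho_\alpha(E)$ rather than merely to the pointwise image $\rho_\alpha\image E$, and it is this propagated coherence, equivalently the biconditional at each stage, that keeps $\rho_{\alpha+1}$ elementary and thereby drives the entire induction.
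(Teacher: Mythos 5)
Your overall architecture is exactly the paper's: build by transfinite recursion a commuting system of $\{\in\}$-elementary maps $\rho_\alpha:N_\alpha\to M_\alpha$ with $\rho_{\alpha+1}([f]_{W_\alpha})=[\rho_\alpha(f)]_{U_\alpha}$, direct limits at limit stages, and transfer well-foundedness from the $M_\alpha$ to the $N_\alpha$. The difference is the inductive invariant you carry, and that is where there is a genuine gap. You maintain the pushforward property for \emph{all} $x\in N_\alpha$ with $x\subseteq W_\alpha$, so at the successor step you must handle an arbitrary $x=[f]_{W_\alpha}\subseteq W_{\alpha+1}$, including ``wide'' ones whose representatives take values $f(\xi)\subseteq\power(\gamma_\alpha)^{N_\alpha}$ of size greater than $\gamma_\alpha$ in $N_\alpha$ (such $x$ exist, e.g.\ the upward closure in $N_{\alpha+1}$ of a single set in $W_{\alpha+1}$). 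For wide $f$ your opening claim, that $E=\{\xi<\gamma_\alpha\mid f(\xi)\subseteq W_\alpha\}$ lies in $W_\alpha$ ``by \L os' theorem,'' is unjustified and in general not even meaningful: since $W_\alpha\subseteq\power(\gamma_\alpha)^{N_\alpha}$, the assertion $E\in W_\alpha$ presupposes $E\in N_\alpha$, but weak amenability supplies the trace of $W_\alpha$ only on sets of size at most $\gamma_\alpha$ in $N_\alpha$ (here, on $\bigcup_\xi\{\xi\}\times f(\xi)$ when every $f(\xi)$ is small); when the $f(\xi)$ are large nothing puts $E$ inside $N_\alpha$. Put differently, \L os with the ultrafilter predicate is available for weakly amenable ultrafilters for the atomic instance ``$f(\xi)\in\dot W$'' --- that is precisely the definition of $W_{\alpha+1}$ --- but not for the universally quantified instance ``$f(\xi)\subseteq\dot W$'' that you invoke. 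So the induction, as you set it up, does not close.

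The repair is to weaken the invariant to exactly what you use: pushforward for those $x\in N_\alpha$ with $x\subseteq W_\alpha$ and $|x|^{N_\alpha}\leq\gamma_\alpha$. Singletons then still give the biconditional $A\in W_\alpha\iff\rho_\alpha(A)\in U_\alpha$ (hence well-definedness and elementarity of $\rho_{\alpha+1}$), and a small $x\subseteq W_{\alpha+1}$ has a representative $f$ all of whose values are small, so $E\in N_\alpha$ by weak amenability, $E\in W_\alpha$ by the selector argument (otherwise choose inside $N_\alpha$, using the trace, some $g$ with $g(\xi)\in f(\xi)\setminus W_\alpha$ on a $W_\alpha$-large set; then $[g]\in x$ but $[g]\notin W_{\alpha+1}$), and your union $y=\bigcup_{\xi\in E}f(\xi)$ is again of size at most $\gamma_\alpha$, so the rest of your argument goes through. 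The paper sidesteps the issue with different bookkeeping: it fixes at stage $0$ the family of \emph{all} subsets $W^i_0$ of $W_0$ lying in $N_0$, pushes them along the $N$-side iteration as $W^i_\xi=h_{0\xi}(W^i_0)$, and maintains only $\rho_\xi(W^i_\xi)\subseteq U_\xi$; for these image sets the successor step is the size-free computation $\rho_{\xi+1}(h_{\xi\xi+1}(v))=j_{\xi\xi+1}(\rho_\xi(v))\subseteq U_{\xi+1}$, valid for any $v\subseteq U_\xi$ in $M_\xi$, and the covering fact $W_\xi=\bigcup_i W^i_\xi$ recovers the elementwise property needed for elementarity. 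A secondary point: your claim that weak amenability of $W$ is ``inherited'' from that of $U$ is also unsubstantiated --- the natural candidate for the trace of $W$ on $A\in N$ is $\rho^{-1}[\,U\cap\rho(A)\,]$, which need not belong to $N$; in the paper's applications this is harmless because there $\rho$ is elementary in the language with the predicate, so amenability transfers as a first-order property, and the paper's own proof likewise tacitly assumes the $N$-side iteration is defined.
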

\begin{proof}
This is a standard idea. We shall demonstrate the
iterability of $\<N,\gamma,W>$ by elementarily embedding
the iterated ultrapowers of $N$ by $W$ into the iterated
ultrapowers of $M$ by $U$. Let $ \{j_{\xi\gamma}:M_\xi\to
M_\gamma\mid\xi<\gamma\in\Ord\} $ be the directed system of
iterated ultrapowers of $M=M_0$ with the associated
sequence of ultrafilters $\{U_\xi\mid\xi\in\Ord\}$, where
$U_0=U$. Also, let $\{h_{\xi\gamma}:N_\xi\to
N_\gamma\mid\xi<\gamma<\alpha\}$ be the not necessarily
well-founded directed system of iterated ultrapowers of
$N=N_0$ with the associated sequence of ultrafilters
$\{W_\xi\mid\xi\in\Ord\}$, where $W_0=W$. Let $\{W^i_0:i\in
I\}$ be any enumeration of all subsets of $W_0$ that are
elements of $N_0$, and define $W^i_\xi=h_{0\xi}(W^i_0)$. By
induction on $\xi$, it is easy to see that
$W_\xi=\Union_{i\in I}W^i_\xi$. We shall show that the
following diagram commutes:
\begin{diagram}[height=18pt]
M_0  &\rTo^{j_{01}}& M_1&\rTo^{j_{12}}&M_2&\rTo^{j_{23}}
 &\ldots&\rTo^{j_{\xi\xi+1}}&M_{\xi+1}&\rTo^{j_{\xi+1\xi+2}}&\ldots\\
\uTo_{\rho_0}& & \uTo_{\rho_1}& & \uTo_{\rho_2}& &  &
&\uTo_{\rho_{\xi+1}}&
& &\\
N_0 & \rTo^{h_{01}} & N_1
&\rTo^{h_{12}}&N_2&\rTo{h_{23}}&\ldots&\rTo^{h_{\xi\xi+1}}&N_{\xi+1}&\rTo^{h_{\xi+1\xi+2}}&\ldots
\end{diagram}
where
\begin{itemize}
\item[(1)]
    $\rho_{\xi+1}([f]_{W_\xi})=[\rho_\xi(f)]_{U_\xi}$,
\item[(2)] if $\lambda$ is a limit ordinal and $t$ is a
    thread in the direct limit $N_\lambda$ with domain
    $[\beta, \lambda)$, then
    $\rho_\lambda(t)=j_{\beta\lambda}(\rho_\beta(t(\beta)))$,
    and
\item[(3)] $\rho_\xi(W_\xi^i)\subseteq U_\xi$.
\end{itemize}
We shall argue that the $\rho_\xi$ exist by induction on
$\xi$. Let $\rho_0=\rho$, and note that $\rho_0$ satisfies
condition (3) by hypothesis. Suppose inductively that
$\rho_\xi:N_\xi\to M_\xi$ is an elementary embedding
satisfying condition (3). Define $\rho_{\xi+1}$ as in
condition (1) above. Using that $\rho_\xi(W_\xi^i)\subseteq
U_\xi$ by the inductive assumption, and $W_\xi=\Union
W_\xi^i$, it follows, in particular, that whenever $A\in
W_\xi$, then $\rho_\xi(A)\in U_\xi$. It follows that
$\rho_{\xi+1}$ is a well-defined map and an elementary
embedding. The commutativity of the diagram is also clear.
It remains to verify that
$\rho_{\xi+1}(W_{\xi+1}^i)\subseteq U_{\xi+1}$. Recall that
\begin{displaymath}
W_{\xi+1}^i=h_{\xi\xi+1}(W_{\xi}^i)=[c_{W_\xi^i}]_{W_\xi}.
\end{displaymath}
Let $\rho_\xi(W_\xi^i)=v$. Then by the inductive
assumption, we have $v\subseteq U_\xi$. Thus,
\begin{displaymath}
\rho_{\xi+1}(W_{\xi+1}^i)=[c_v]_{U_\xi}=j_{\xi\xi+1}(v)\subseteq
U_{\xi+1}.
\end{displaymath}
The last relation follows since $v\of U_\xi$. This
completes the inductive step. The limit case also follows
easily.
\end{proof}

Note that if $\rho$ is an elementary embedding in the
language with the predicate for the ultrafilter, then the
additional hypothesis of Lemma \ref{le:substructure}
follows for free. This is how Lemma \ref{le:substructure}
will be used in most applications below.

In the next section, we shall build  inner models by
iterating out these countable iterable structures and
forcing over the limit model inside the universe, just as
we explained in the proof sketch for Theorem
\ref{Theorem.SC+MeasurableAbove}. In other arguments,
however, the desired forcing will be stretched to proper
class length, and so we shall proceed instead by first
forcing over the countable structure and then iterating the
extended structure. For these arguments, therefore, we need
to understand when a forcing extension of an iterable
countable structure remains iterable. For a certain general
class of forcing notions and embeddings, we shall show in
Theorem \ref{th:forcing} that indeed the lift of an
iterable embedding to a forcing extension remains iterable,
and what is more, lifting just the first step of the
iteration to the forcing extension can lead to a lift of
the entire iteration. In rather general circumstances,
therefore, the iteration of a lift is a lift of the
iteration.

This argument will rely on the following characterization
of when an ultrapower of a forcing extension is a lift of
the ultrapower of the ground model. Suppose that $M$ is a
transitive model of $\ZFC^{-}$, that $\P$ is a poset in $M$
and that $G\subseteq\P$ is $M$-generic. Suppose further
that $U$ is an $M$-ultrafilter on a cardinal $\delta$ in
$M$ and $U^*$ is an $M[G]$-ultrafilter extending $U$, both
with well-founded ultrapowers. Then the ultrapower by $U^*$
lifts the ultrapower by $U$ if and only if every
$f:\delta\to M$ in $M[G]$ is $U^*$-equivalent to some
$g:\delta\to M$ in $M$. For the forward direction, suppose
that the ultrapower $j:M[G]\to N^*$ by $U^*$ lifts the
ultrapower $j:M\to N$ by $U$ and $\tau_G=f:\delta\to M$ is
a function in $M[G]$. Note that $f:\delta\to A$ where
$A=\{a\in M\mid \exists p{\in} \P\exists \xi{\in}\delta\,
p\forces \tau(\check\xi)=\check{a}\}$ is an element of $M$
by replacement. Thus, $j(f)(\delta)\in j(A)\of N$ and so
$j(f)(\delta)=j(g)(\delta)$ for some $g\in M$, from which
it follows that $f$ is $U^*$-equivalent to $g$. For the
backward direction, note that there is an isomorphism
between $N$ and a transitive submodel of $N^*$ sending
$[f]_U$ to $[f]_{U^*}$.
Applying this characterization, if we lift the first
embedding in the iteration, then the ultrafilter derived
from the lift will have the above property. The key to the
argument will be to capture this property as a schema of
first-order statements over the forcing extension and
propagate it along the iteration using elementarity.

Let us now discuss a class of posets for which this
strategy proves successful. Suppose $j:M\to N$ is an
elementary embedding with critical point $\delta$. We
define that a poset $\P\in M$ is $j$-\emph{useful} if $\P$
is $\delta$-c.c.~in $M$ and $j(\P)\cong\P*\dot{\P}_\tail$,
where $\one_\P\forces ``\dot{\P}_\tail$ is
${\leq}\delta$-strategically closed'' in $N$. There are
numerous examples of such posets arising in the context of
forcing with large cardinals, and we shall mention several
in Sections \ref{Section.ThirdMethod} and
\ref{Section.FurtherApplications}. We presently explain how
the property of $j$-usefulness allows us to find lifts of
an ultrapower embedding to the forcing extension, so that
the iteration of the lift is the lift of the iteration. If
$\Q$ is any poset and $X$ is a set, not necessarily
transitive, define as usual that a condition $q \in \Q$ is
{\df $X$-generic for $\Q$} if for every $V$-generic filter
$G\of\Q$ containing $q$ and every maximal antichain
$A\of\Q$ with $A\in X$, the intersection $G\cap A\cap
X\neq\emptyset$; in other words, $q$ forces over $V$ that
the generic filter meets the maximal antichains of $X$
inside $X$. Suppose $j:M\to N$ and $\P$ is $j$-useful. Our
key observation about $j$-usefulness is that if $X\in N$ is
sufficiently elementary in $N$ with $X^{<\delta}\subseteq
X$ and $|X|=\delta$ in $N$, then every condition
$(p,\dot{q})\in \P*\dot{\P}_\tail\cap X$ can be
strengthened to an $X$-generic condition. First, observe
that every condition in $\P$ is $X$-generic for $\P$, since
maximal antichains of $\P$ have size less than $\delta$ and
so if $X$ contains such an antichain as an element, it must
be a subset as well. Thus, for the pair $(p,\dot{q})$ to be
$X$-generic for $j(\P)$, it suffices for $p$ to force that
$\dot{q}$ sits below some element of every dense subset of
$\P_\tail$ in $X[\dot{G}]$. Such a $\dot{q}$ is found by a
simple diagonalization argument, using the facts that
$|X|=\delta$, $X^{<\delta}\subseteq X$ and $\dot\P_\tail$
is forced to be $\leqdelta$-strategically closed.

Let us use the notation $\<M,\delta, U>\models``$I am
$\her{\delta}"$ to mean that $M$ believes every set has
size at most $\delta$. We now prove that if a certain
external genericity condition is met, then the iteration of
a lift is a lift of the iteration.

\begin{theorem}\label{th:forcing}
Suppose that $\<M_0,\delta,U_0>\models$``I am
$H_{\delta^\plus}$'' is iterable, and that the first step
of the iteration $j_{01}:M_0\to M_1$ lifts to an embedding
$j_{01}^*:M_0[G_0]\to M_1[G_1]$ on the forcing extension,
where $G_0\of\P$ is $M_0$-generic, $j_{01}^*(G_0)=G_1\of
j_{01}(\P)$ is $M_1$-generic and $\P$ is $j_{01}$-useful.
$$\small\begin{diagram}[height=15pt]
M_0[G_0]&\rTo^{\!\scriptscriptstyle j_{01}^*}& M_1[G_1]&\\
\cup &&\cup \\
M_0  &\rTo^{j_{01}}& M_1&\rTo^{j_{12}}&\cdots&\rTo&M_\xi
  &\rTo^{\!\!\scriptscriptstyle j_{\xi\xi+1}}&\cdots\\
\end{diagram}$$
Then $j_{01}^*$ is the ultrapower by a weakly amenable
$M_0[G_0]$-ultrafilter $U_0^*$ extending $U_0$.
Furthermore, if $G_1$ meets certain external dense sets
$D_a\of j_{01}(\P)$ for $a\in M_0$ described in the proof
below, then $\<M_0[G_0],\delta,U_0^*>$ is iterable, and the
entire iteration of $\<M_0[G_0],\delta,U_0^*>$ lifts the
iteration of $\<M_0,\delta,U_0>$ step-by-step.
$$\small\begin{diagram}[height=15pt]
M_0[G_0]&\rTo^{\scriptscriptstyle j_{01}^*}& M_1[G_1]&\rTo^{j_{12}^*}&\cdots&\rTo&M_\xi[G_\xi]&\rTo^{\scriptscriptstyle j_{\xi\xi+1}^*}&\cdots\\
\cup &&\cup &&&&\cup &&   \\
M_0  &\rTo^{j_{01}}& M_1&\rTo^{j_{12}}&\cdots&\rTo&M_\xi&\rTo^{\!\!\scriptscriptstyle j_{\xi\xi+1}}&\cdots\\\end{diagram}$$
\smallskip
Thus, the iteration of the lift is a lift of the iteration.
\end{theorem}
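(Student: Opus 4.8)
The plan is to prove the two assertions in turn, first establishing that $j_{01}^*$ is an ultrapower by a weakly amenable ultrafilter and then propagating this structure along the whole iteration. For the first part I would set $U_0^*=\{A\in P(\delta)^{M_0[G_0]}\st \delta\in j_{01}^*(A)\}$, the ultrafilter on $\delta$ derived from the lift using $\delta$ itself as seed. Since $j_{01}^*\restrict M_0=j_{01}$ and $U_0$ is exactly the ultrafilter derived from $j_{01}$ in the same way, $U_0\of U_0^*$ is immediate, and a standard \L o\'s argument shows that $U_0^*$ is seen as a normal ultrafilter by $\langle M_0[G_0],\delta,U_0^*\rangle$. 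The crucial point is that $j_{01}^*$ really is the ultrapower by $U_0^*$: here I would use the hypothesis that $\langle M_0,\delta,U_0\rangle\models``$I am $\her\delta$'' to represent an arbitrary $y\in M_1[G_1]$ as $\sigma_{G_1}$ for a $j_{01}(\P)$-name $\sigma\in M_1=\Ult(M_0,U_0)$, and then to write $\sigma=j_{01}(F)(\delta)$ for some $F\colon\delta\to M_0$ in $M_0$ whose values are, on a $U_0$-large set, $\P$-names. Setting $f(\xi)=\val(F(\xi),G_0)$ yields a function $f\in M_0[G_0]$ with $j_{01}^*(f)(\delta)=\val(j_{01}(F)(\delta),G_1)=\sigma_{G_1}=y$, so the canonical factor map $[f]_{U_0^*}\mapsto j_{01}^*(f)(\delta)$ is onto and hence an isomorphism.

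Weak amenability of $U_0^*$ is equivalent to $\delta$-powerset preservation of $j_{01}^*$, that is, to $P(\delta)^{M_0[G_0]}=P(\delta)^{M_1[G_1]}$, and I would obtain this by a short chain. Weak amenability of $U_0$ gives $P(\delta)^{M_0}=P(\delta)^{M_1}$; since $\P$ is $\delta$-c.c.\ in $M_0$ and, by $j$-usefulness, appears as the first factor of $j_{01}(\P)\in M_1$, the two models have the same nice $\P$-names for subsets of $\delta$ and the same generic $G_0$ (which is $M_1$-generic as well, maximal antichains of $\P$ having size $<\delta$), whence $P(\delta)^{M_0[G_0]}=P(\delta)^{M_1[G_0]}$; finally $\Ptail$, being $\leqdelta$-strategically closed, adds no new subset of $\delta$, so $P(\delta)^{M_1[G_0]}=P(\delta)^{M_1[G_1]}$.

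For the second part, weak amenability already guarantees that the iterated ultrapowers $j_{\xi\eta}^*\colon M_\xi[G_\xi]\to M_\eta[G_\eta]$ of $\langle M_0[G_0],\delta,U_0^*\rangle$ are defined at all lengths; what must be shown is that they are well-founded and each lifts the corresponding $j_{\xi\eta}$. I would argue by induction that $M_\xi[G_\xi]$ is a genuine forcing extension of the $\xi$-th iterate $M_\xi$ of $M_0$ by a filter $G_\xi=j_{0\xi}^*(G_0)$ that is honestly $M_\xi$-generic for $j_{0\xi}(\P)$; since $M_\xi$ is well-founded by the iterability of $\langle M_0,\delta,U_0\rangle$, well-foundedness of $M_\xi[G_\xi]$ follows at once. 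The engine is the characterization recalled before the theorem: that $j_{\xi,\xi+1}^*$ is an ultrapower lifting $j_{\xi,\xi+1}$ is equivalent to every $f\colon\delta_\xi\to M_\xi$ in $M_\xi[G_\xi]$ being $U_\xi^*$-equivalent to a ground-model $g$. This holds at stage $0$ by the forward direction of the characterization (supplied by the first part), and—because weak amenability makes the relevant fragments of $U_\xi^*$ available inside $M_\xi[G_\xi]$—it can be expressed, name by name, as a first-order schema $\{\psi_a\st a\in M_0\}$ over $M_0[G_0]$; applying the elementary maps $j_{0\xi}^*$ transports each $\psi_a$ to its stage-$\xi$ instance, so the property propagates up the iteration and through the direct limits at limit stages, where commutativity is checked as in Lemma \ref{le:substructure}.

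The role of the external dense sets is to make the successor step go through. Granting the inductive hypothesis at $\xi$, I must verify that $G_{\xi+1}=G_\xi*G_{\tail,\xi}$ is genuinely $M_{\xi+1}$-generic, where by $j$-usefulness transported through $j_{\xi,\xi+1}$ one has $j_{\xi,\xi+1}(j_{0\xi}(\P))\cong j_{0\xi}(\P)*\dot\P_{\tail,\xi}$ with $\leq\delta_\xi$-strategically closed tail. Using that every dense subset of $j_{0\xi}(\P)$ lying in $M_{\xi+1}$ is of the form $j_{\xi,\xi+1}(D)(\delta_\xi)$ for ground-model data, the genericity demands arising at every stage can be pulled back to a single set-sized family of external dense sets $D_a\of j_{01}(\P)$, indexed by $a\in M_0$, so that meeting these with $G_1$ is exactly what forces each $G_\xi$ to meet all dense sets of $j_{0\xi}(\P)$ in $M_\xi$. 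I expect this last point—pinning down the $D_a$ and verifying that genericity against them at the first step yields genericity of every $G_\xi$, hence well-foundedness and the step-by-step lifting of the full iteration—to be the main obstacle; by contrast the first part and the limit-stage bookkeeping are essentially routine.
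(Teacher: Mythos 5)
Your first part is correct and matches the paper's argument: you derive $U_0^*$ from the lift, use ``I am $\her{\delta}$'' to write each element of $M_1[G_1]$ as $j_{01}^*(f)(\delta)$ for some $f\in M_0[G_0]^\delta\cap M_0[G_0]$, and obtain weak amenability from the factorization $j_{01}(\P)\cong\P*\Ptail$ with $\delta$-c.c.\ first factor and ${\leq}\delta$-strategically closed tail. The gap is in the second part, at exactly the point you defer. Your schema $\{\psi_a\st a\in M_0\}$ is never defined, and no definition whose stage-$0$ instances ``hold by the first part'' can work. A statement transportable by $j_{0\xi}^*$ must have only set parameters in $M_0[G_0]$, so it must be localized, as in the paper: ``every $f:\delta\to m_a$ in $m_a[G_0]$ is $u_a^*$-equivalent to some $g\in m_a$'', where $m_a\in M_0$ is a transitive $\ZFC^-$ model containing $a$ and $\P$, and $u_a^*=m_a\cap U_0^*$. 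But this localized statement does \emph{not} follow from the global agreement property established in the first part: that property produces a witness $g\in M_0$, with no reason for $g$ to lie in $m_a$; securing the localized version is precisely what the external dense sets $D_a$ are for. If instead you intend $\psi_a$ to be a per-function (``name by name'') statement, then transport by $j_{0\xi}^*$ reaches only functions of the form $j_{0\xi}^*(f)$, whereas a typical element of $M_\xi[G_\xi]$ has the form $j_{0\xi}^*(f)(s)$ for a finite seed $s\of\delta_\xi$; you are missing the exhaustion argument (every element of $M_\xi$ is $j_{0\xi}(f)(s)$, hence the models $j_{0\xi}(m_a)$ exhaust $M_\xi$ and the $j_{0\xi}^*(m_a[G_0])$ exhaust $M_\xi[G_\xi]$), which is exactly what makes the universally quantified, set-model form of the schema indispensable.

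Relatedly, you misidentify what the $D_a$ accomplish. Genericity of $G_{\xi+1}=j_{\xi\xi+1}^*(G_\xi)$ over $M_{\xi+1}$ at later stages is not the issue: once the successor step is known to be a lift, it follows by \L os's theorem, since every dense subset of $j_{\xi\xi+1}(j_{0\xi}(\P))$ in $M_{\xi+1}$ is represented by a function in $M_\xi$ whose values are almost everywhere dense subsets of $j_{0\xi}(\P)$, all of which $G_\xi$ meets. In the paper, $D_a$ consists of the conditions that are $X_m$-generic for some transitive $m\in M_0$ with $a\in m$, where $X_m=\set{j_{01}(f)(\delta)\st f\in m}\elesub j_{01}(m)$; meeting $D_a$ forces $X_m[G_1]\intersect M_1=X_m$, which is what produces the witness $g$ inside $m$ and hence the localized property. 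Moreover, the density of $D_a$ is itself a substantive claim requiring proof: one chooses $m$ with $m^\ltdelta\of m$ in $M_0$, so that $X_m^\ltdelta\of X_m$ in $M_1$, and diagonalizes inside $M_1$ against the dense subsets of the tail forcing lying in $X_m$ below a given condition, using exactly the $j$-usefulness factorization. None of this machinery is recoverable from the pieces you have assembled, so the proposal as it stands does not prove the iterability or the step-by-step lifting.
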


\begin{proof} Suppose that the ultrapower
$j_{01}:M_0\to M_1$ by $U_0$ lifts to $j_{01}^*:M_0[G_0]\to
M_1[G_1]$, with $j_{01}^*(G_0)=G_1$. By the normality of
$U_0$, it follows that every element of $M_1$ has the form
$j_{01}(f)(\delta)$ for some $f\in M_0^\delta\intersect
M_0$. Every element of $M_1[G_1]$ is $\tau_{G_1}$ for some
$j_{01}(\P)$-name $\tau\in M_1$, and so
$\tau=j_{01}(t)(\delta)$ for some function $t\in M_0$.
Define a function $f$ in $M_0[G_0]$ by
$f(\alpha)=t(\alpha)_{G_0}$, and observe that
$j_{01}^*(f)(\delta)=j_{01}(t)(\delta)_{j_{01}^*(G_0)}=\tau_{G_1}$.
Thus, every element of $M_1[G_1]$ has the form
$j_{01}^*(f)(\delta)$ for some $f\in
M_0[G_0]^\delta\intersect M_0[G_0]$. It follows that
$j_{01}^*$ is the ultrapower of $M_0[G_0]$ by the
$M[G_0]$-ultrafilter $U_0^*=\set{X\of \delta \mid X\in
M_0[G], \delta\in j_{01}^*(X)}$, which extends $U_0$. Note
that since $\P$ is $j_{01}$-useful, it follows that
$j_{01}(\P)\cong\P*\Ptail$, where $\Ptail$ adds no new
subsets of $\delta$ and $\P$ is $\delta$-c.c. From this, we
obtain that
$P(\delta)^{M_1[G_1]}=P(\delta)^{M_1[G_0]}=P(\delta)^{M_0[G_0]}$,
and so $U_0^*$ is weakly amenable to $M_0[G_0]$. It
therefore makes sense to speak of the iterated ultrapowers
of $\<M_0[G_0],\delta,U_0^*>$, apart from the question of
whether these iterates are well-founded.

The fact that the ultrapower $j_{01}:M_0\to M_1$ by $U_0$
lifts to the ultrapower $j_{01}^*:M_0[G_0]\to
M_1[j_{01}^*(G_0)]$ by $U_0^*$ is exactly equivalent to the
assertion that for every function $f\in
M_0^\delta\intersect M_0[G_0]$ there is a function $g\in
M_0^\delta\intersect M_0$ such that $f$ and $g$ agree on a
set in $U_0^*$. In slogan form: Every new function agrees
with an old function. This property is first order
expressible in the expanded structure
$\<M_0[G_0],\delta,U_0^*,M>$, by a statement with
complexity at most $\Pi_2$. If $j_{01}^*$ were sufficiently
elementary on this structure, then it would preserve the
truth of this statement and we could deduce easily that the
iterates of $U_0^*$ are step-by-step lifts of the
corresponding iterates of $U_0$, completing the proof.
Unfortunately, in the general case we cannot be sure that
$j_{01}^*$ is sufficiently elementary on this expanded
structure. Similarly, although the original embedding
$j_{01} : M_0 \to M_1$ is fully elementary, it may not be
fully elementary on the corresponding expanded structure
$j_{01}:\<M_0,\delta,U_0>\to \<M_1,\delta_1,U_1>$. The rest
of this argument, therefore, will be about getting around
this difficulty by showing that if $G_1$ satisfies an extra
genericity criterion, then the iteration of $U_0^*$ does
indeed lift the iteration of $U_0$.

Specifically, through this extra requirement on $G_1$, we
will arrange that for every $a\in M_0$, there is a set
$m_a\in M_0$ such that
\begin{itemize}\item[(1)] $m_a$ is a transitive model of $\ZFC^-$
    containing $\P$ and $a$, and
\item[(2)] every $f:\delta\to m_a$ in $m_a[G_0]$ is
    $u_a^*$-equivalent to some $g:\delta\to m_a$ in
    $m_a$,
\end{itemize}
where $u_a^*=m_a\cap U_0^*$, which is an element of
$M_0[G_0]$ by the weak amenability of $U_0^*$ to
$M_0[G_0]$.

Let us first suppose that we have already attained (1) and
(2) for every $a$ and explain next how this leads to the
conclusion of the theorem. Suppose inductively that the
iteration of $U_0^*$ on $M_0[G_0]$ is a step-by-step lift
of the iteration of $U_0$ on $M_0$ up to stage $\xi$. Note
that limit stages come for free, because if every successor
stage before a limit is a lift, then the limit stage is
also a lift. Thus, we assume that the diagram in the
statement of the theorem is accurate through stage $\xi$,
so that in particular the $\xi^\th$ iteration
$j_{0\xi}^*:M_0[G_0]\to M_\xi[G_\xi]$ of $U_0^*$ is a lift
of the $\xi^\th$ iteration $j_{0\xi}:M_0\to M_\xi$ of
$U_0$, and we consider the next step $M_\xi[G_\xi]\to
\Ult(M_\xi[G_\xi],U_\xi^*)$. Since any given instance of
(1) and (2), for fixed $a$, is expressible in $M_0[G_0]$ as
a statement about $(m_a,G_0,u_a^*,a,\P)$, it follows by
elementarity that $j_{0\xi}^*(m_a)$ is a transitive model
of $\ZFC^{-}$ containing $j_{0\xi}(\P)$, and that every
$f:j_{0\xi}^*(\delta)\to j_{0\xi}^*(m_a)$ in
$j_{0\xi}^*(m_a)[G_\xi]$ is $j_{0\xi}^*(u_a^*)$-equivalent
to a function $g:j_{0\xi}^*(\delta)\to j_{0\xi}^*(m_a)$ in
$j_{0\xi}(m_a)$. Note that since $u_a^*\of U_0^*$, it
follows by an easy argument that $j_{0\xi}^*(u_a^*)\of
U_\xi^*$. Thus, as far as $j_{0\xi}^*(m_a)$ and
$j_{0\xi}^*(m_a)[G_\xi]$ are concerned, every new function
agrees with an old function. But now the key point is that
the $j_{0\xi}(m_a)$ exhaust $M_\xi$, since every object in
$M_\xi$ has the form $j_{0\xi}(f)(s)$ for some finite
$s\of\delta_\xi$, and thus once we put $f$ into $m_a$ by a
suitable choice of $a$, then $j_{0\xi}(f)(s)$ will be in
$j_{0\xi}(m_a)$. From this, it follows that the
$j_{0\xi}^*(m_a[G_0])$ exhaust $M_\xi[G_\xi]$, since every
element of $M_\xi[G_\xi]$ has a name in $M_\xi$. Therefore,
every new function in $M_\xi[G_\xi]$ agrees on a set in
$U_\xi^*$ with an old function in $M_\xi$, and so the
ultrapower of $M_\xi[G_\xi]$ by $U_\xi^*$ is a lift of
$j_{\xi\xi+1}$. Thus, we have continued the step-by-step
lifting one additional step, and so by induction, the
entire iteration lifts step-by-step as claimed.

It remains to explain how we achieve (1) and (2) for every
$a\in M_0$. First, we observe that $M_0$ is the union of
transitive models $m$ of $\ZFC$. This is because any set
$A\of\delta$ in $M_0$ is also in $M_1$ and therefore in
$V_{j_{01}(\delta)}^{M_1}$, which is a model of $\ZFC$
since $j_{01}(\delta)$ is inaccessible in $M_1$. By
collapsing an elementary substructure of this structure in
$M_1$, therefore, we find a size $\delta$ transitive model
$m\satisfies\ZFC$ with $A\in m\in M_1$. Since $m$ has size
$\delta$ and $M_0=H_{\delta^\plus}^{M_1}$ by weak
amenability, it follows that $m\in M_0$ as well. Thus, for
any $a\in M_0$ there are numerous models $m$ as in
Statement (1), even with full \ZFC.

For any such $m$, let $X_m = \set{j_{01}(f)(\delta) \mid
f\in m}$. It is not difficult to check that $X_m\elesub
j_{01}(m)$, by verifying the Tarski-Vaught criterion. Also,
since $j_{01}\restrict m \in M_1$, it follows that $X_m\in
M_1$, although by replacement the map $m\mapsto X_m$ cannot
exist in $M_1$, since $M_1$ is the union of all $X_m$. For
any $a\in M_0$, let
$$D_a=
\left\{\, q\in j_{01}(\P)\,\mid\,
  q\text{ is }X_m\text{-generic for some transitive }m\satisfies\ZFC^-\text{ with }
  a\in m\in M_0\,\right\}.$$
Recall that a condition $q$ is $X_m$-generic for
$j_{01}(\P)$ if every $M_1$-generic filter $G\of
j_{01}(\P)$ has $G\intersect D\intersect X_m\neq\emptyset$
for every dense set $D\of j_{01}(\P)$ in $M_1$. Because the
definition of $D_a$ refers to the various $X_m$, there is
little reason to expect that $D_a$ is a set in $M_1$.
Nevertheless, we shall argue anyway that it is a dense
subset of $j_{01}(\P)$.

To see this, fix $a$ and any condition $p\in j_{01}(\P)$.
Since $p=j(\vec p)(\delta)$ for some function $\vec p\in
M_0$, we may find as we explained above a transitive set
$m\in M_0$ with $a,\vec p,\P\in m\satisfies\ZFC$. We may
also ensure in that argument that $m^\ltdelta\of m$ in
$M_0$. It follows that $X_m^\ltdelta\of X_m$ in $M_1$, and
since $\vec p\in m$, we also know that $p=j(\vec
p)(\delta)\in X_m$. The forcing $j_{01}(\P)$ is in $X_m$
and factors as $\P*\Ptail$, where $\P$ is $\delta$-c.c. and
$\Ptail$ is forced to be $\leqdelta$-strategically closed.
Since $M_1$ knows that $X_m$ has size $\delta$, it can
perform a diagonalization below $p$ of the dense sets for
the tail forcing, and thereby produce a $\P$-name for a
condition in $\Ptail$ meeting all those dense sets. (This
is where we have used the key property of $j$-usefulness
mentioned before the theorem.) Thus, $M_1$ can build an
$X_m$-generic condition $q$ for $j_{01}(\P)$ below $p$.
This establishes that $D_a$ is dense, as we claimed.

We now suppose that $G_1$ meets all the dense sets $D_a$,
and use this to establish (1) and (2). For any $a\in M_0$,
we have a condition $q\in G_1$ that is $X_m$-generic for
some transitive $m\satisfies\ZFC^-$ in $M_0$ containing
$\<a,\P>$, thereby satisfying (1). From this, it follows
that $X_m[G_1]\intersect M_1 = X_m$, since for any name in
$X_m$ for an object in $M_1$, $X_m$ has a dense set of
conditions deciding its value, and since $G_1$ meets this
dense set inside $X_m$, the decided value must also be in
$X_m$. Now, suppose that $f:\delta\to m$ is a function in
$m[G_0]$, so that $f=\dot f_{G_0}$ for some name $\dot f\in
m$. Since $\dot f\in m$, it follows that $j_{01}(\dot f)\in
X_m$, and so $j_{01}(f)(\delta)\in X_m[G_1]$. Since
$\ran(f)\of m$, it follows that $\ran(j_{01}(f))\of
j_{01}(m)$, which is contained in $M_1$. Thus,
$j_{01}(f)(\delta)\in X_m[G_1]\intersect M_1$, which is
equal to $X_m$. But every element of $X_m$ has the form
$j_{01}(g)(\delta)$ for some function $g\in m$, and so
$j_{01}(f)(\delta)=j_{01}(g)(\delta)$ for such a function
$g$. It follows that $f$ and $g$ agree on a set in $U_0^*$
and we have established (2), completing the argument.
\end{proof}

A special case of the theorem occurs when $\P$ has size
smaller than $\delta$ in $M_0$. In this case, $\Ptail$ is
trivial and the extra genericity condition is automatically
satisfied, since the dense sets $D_a$ would be elements of
$M_1$. The nontrivial case of the theorem occurs when the
forcing $\P$ has size $\delta$, and its image is therefore
stretched on the ultrapower side. We are unsure about the
extent to which it could be true generally that the
iteration of a lift is a lift of the iteration. Surely some
hypotheses are needed on the forcing, since if $\P$ is an
iteration of length $\delta$ and $j(\P)$ adds new subsets
to $\delta$ at stage $\delta$, for example, then the lift
$j_{01}^*$ will not be weakly amenable, making it
impossible to iterate. Our $j$-usefulness hypothesis avoids
this issue, but we are not sure whether it is possible to
omit the external genericity assumption we made on $G_1$.
Nevertheless, this extra genericity assumption appears to
be no more difficult to attain in practice than ordinary
$M_1$-genericity. For example, in the case of countable
structures:

\begin{corollary}\label{Corollary.CountableIterationOfLift}
If $\<M,\delta,U>\models$``I am $H_{\delta^\plus}$'' is a
countable iterable structure and $\P\in M$ is useful for
the ultrapower of $M$ by $U$, then there is an $M$-generic
filter $G\of\P$ and $M[G]$-ultrafilter $U^*$ extending $U$
such that $\<M[G],\delta,U^*>$ is iterable, and the
iteration of $M[G]$ by $U^*$ is a step-by-step lift of the
iteration of $M$ by $U$.
\end{corollary}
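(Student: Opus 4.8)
The plan is to derive the corollary directly from Theorem~\ref{th:forcing} by verifying that, in the countable case, the extra external genericity requirement on $G_1$ can always be met. First I would note that since $\<M,\delta,U>$ is countable, the first-step ultrapower embedding $j_{01}:M\to M_1$ and the ultrapower structure $M_1$ are themselves countable objects (as $M_1$ is a quotient of $M^\delta\intersect M$, which is countable). Because $\P$ is useful for this ultrapower, Theorem~\ref{th:forcing} tells us exactly what we must supply: an $M$-generic filter $G_0\of\P$ whose lift $G_1=j_{01}^*(G_0)\of j_{01}(\P)$ meets all the externally-defined dense sets $D_a$ for $a\in M$. The conclusion of the corollary is then immediate, since Theorem~\ref{th:forcing} yields the weakly amenable $M[G]$-ultrafilter $U^*$ extending $U$ with $\<M[G],\delta,U^*>$ iterable and the iteration of the lift lifting the iteration step-by-step.

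The heart of the argument is therefore a single diagonalization. Here I would exploit countability: there are only countably many maximal antichains of $\P$ in $M$, and only countably many of the dense sets $D_a$ for $a\in M$ (one for each $a$, and $M$ is countable). By Theorem~\ref{th:forcing} each $D_a$ was shown to be dense in $j_{01}(\P)$, so I can enumerate in a single $\omega$-sequence all the dense sets of $\P$ lying in $M$ together with the requirement, on the image side, that $G_1$ meet each $D_a$. The plan is to build $G_0$ (equivalently, its lift $G_1$, since $G_1=j_{01}^*(G_0)$ is determined by the $\delta$-c.c.\ factor together with the tail) by a descending $\omega$-chain of conditions meeting these countably many requirements in turn. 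Concretely, I would work on the $j_{01}(\P)=\P*\Ptail$ side: at each step I strengthen the current condition first to decide into a given dense subset of $\P$ (to guarantee $M$-genericity of $G_0$) and then, using the density of $D_a$ established in the theorem, into the relevant $D_a$; since $j_{01}(\P)$ factors as a $\delta$-c.c.\ forcing followed by a $\leqdelta$-strategically closed tail and $M_1$ can build $X_m$-generic conditions, the requisite conditions are available.

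The main technical point — and the step I expect to require the most care — is ensuring that the filter $G_0$ I extract on the ground-model forcing $\P$ is genuinely $M$-generic and that its canonical lift $G_1$ is simultaneously $M_1$-generic and meets the $D_a$. Because $j_{01}(\P)\cong\P*\Ptail$ with $\Ptail$ adding no subsets of $\delta$, a filter on the $\P$ factor lifts canonically once a generic for the tail is threaded through; the $X_m$-genericity of the conditions supplied by the density of $D_a$ is precisely what forces the lift to respect the maximal antichains of $M_1$. So the diagonalization must interleave three demands: hitting each dense subset of $\P$ in $M$, hitting each $D_a$, and keeping the tail-generic coherent. Countability of $M$ (hence of $M_1$, of the collection of antichains, and of the family $\{D_a\mid a\in M\}$) is exactly what reduces all of this to an $\omega$-length construction, so no diagonalization at uncountable length is needed and the external genericity hypothesis of Theorem~\ref{th:forcing} is attained for free. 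Once $G_0$ and $U^*$ are in hand, the corollary follows directly by invoking the theorem.
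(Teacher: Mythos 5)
Your proposal is correct and follows essentially the same route as the paper: the corollary is treated exactly as a special case of Theorem~\ref{th:forcing}, with the countability of $M$ (and hence of $M_1$ and of the family $\{D_a \mid a\in M\}$) reducing all the genericity requirements to a single $\omega$-length diagonalization along the factorization $j_{01}(\P)\cong\P*\Ptail$. The only refinement worth making is that, since $M_1$ is itself countable, you can simply fold the dense subsets of $j_{01}(\P)$ lying in $M_1$ directly into your enumeration---as the paper does---so that the hypothesis of Theorem~\ref{th:forcing} that $G_1$ be $M_1$-generic is immediate, rather than being derived indirectly from the $X_m$-generic conditions that $G_1$ acquires by meeting the sets $D_a$.
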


\begin{proof}
This is simply a special case of the previous theorem. When
$M$ is countable, then there is no trouble in finding an
$M$-generic filter $G$ and $M_1$-generic filter $G_1$
satisfying the extra genericity requirement, since there
are altogether only countably many dense sets to meet.
\end{proof}

\section{The third proof method}\label{Section.ThirdMethod}
In this section, for the third proof method, we generalize
the proof sketch of Theorem
\ref{Theorem.SC+MeasurableAbove} given at the end of
Section \ref{sec:threeproofs}. For the arguments here, we
shall use the hypothesis of having a weakly iterable
cardinal $\delta$ with $V_\delta$ a model containing large
cardinals. We shall use the structure $\<M,\delta,U>$
witnessing the weak iterability of $\delta$ to produce a
countable iterable structure and build the inner model out
of the iterates of this structure or the iterates of its
forcing extension.
\begin{theorem}\label{Theorem.Iterable}
If $\<M,\delta,U>$ is iterable with a poset $\P\in
V_\delta^M$, then there is an inner model satisfying every
sentence forced by $\P$ over $V_\delta^M$.
\end{theorem}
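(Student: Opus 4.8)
The plan is to collapse $\<M,\delta,U>$ to a countable iterable structure, iterate it out through all the ordinals to produce a genuine inner model $N$, observe that $V_\delta^M$ sits inside $N$ as an elementary substructure, and then force over $N$ with a generic filter constructed by diagonalization in $V$.

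First I would pass to a countable structure. Choose a countable $X\elesub\<M,\delta,U>$ with $\P\in X$, and let $\pi\colon X\to\<\bar M,\bar\delta,\bar U>$ be the Mostowski collapse, writing $\bar\P=\pi(\P)$. By the remark following Lemma \ref{le:substructure}, applied to the elementary embedding $\pi^{-1}$ in the language carrying the predicate for the ultrafilter, the structure $\<\bar M,\bar\delta,\bar U>$ is iterable, and $\bar\P\in V_{\bar\delta}^{\bar M}$. Since $\pi^{-1}$ is elementary with $\pi^{-1}(\bar\delta)=\delta$ and $\pi^{-1}(\bar\P)=\P$, a sentence is forced by $\bar\P$ over $V_{\bar\delta}^{\bar M}$ exactly when it is forced by $\P$ over $V_\delta^M$; so it suffices to build an inner model of the theory forced by $\bar\P$ over $V_{\bar\delta}^{\bar M}$.

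Next I would iterate. Let $\<M_\xi,j_{\xi\eta}>$ be the iteration of $\<\bar M,\bar\delta,\bar U>$, with critical points $\bar\delta=\delta_0<\delta_1<\cdots$; by iterability each $M_\xi$ is well-founded, so I take it transitive. Put $N=\Union_{\xi\in\Ord}V_{\delta_\xi}^{M_\xi}$. I claim $N$ is an inner model. Since $j_{\xi\eta}$ has critical point $\delta_\xi$, it fixes $V_{\delta_\xi}^{M_\xi}$ pointwise while sending $\delta_\xi$ to $\delta_\eta$; hence by elementarity of $j_{\xi\eta}\colon M_\xi\to M_\eta$ one gets, for $\vec a\in V_{\delta_\xi}^{M_\xi}$ and any $\varphi$, that $V_{\delta_\xi}^{M_\xi}\satisfies\varphi(\vec a)$ iff $V_{\delta_\eta}^{M_\eta}\satisfies\varphi(\vec a)$, so $V_{\delta_\xi}^{M_\xi}\elesub V_{\delta_\eta}^{M_\eta}$. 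Thus the $V_{\delta_\xi}^{M_\xi}$ form a continuous elementary chain of models of $\ZFC$ (each $\delta_\xi$ is inaccessible in $M_\xi$), and its union $N$ is a transitive model of $\ZFC$ containing every ordinal, because the $\delta_\xi$ are cofinal in $\Ord$. The crucial consequence is that the base is elementary in the union: $V_\delta^M=V_{\bar\delta}^{\bar M}=V_{\delta_0}^{M_0}\elesub N$. Moreover $\bar\P$ lies below the first critical point, so it is fixed throughout the iteration and $\bar\P\in V_{\bar\delta}^N$.

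Finally I would force. As $\bar\P$ has rank below $\bar\delta$ in $N$, every dense subset of $\bar\P$ lying in $N$ already lies in $V_{\bar\delta}^N=V_{\bar\delta}^{\bar M}$, which is countable in $V$; so a routine diagonalization in $V$, exactly as in Corollary \ref{Corollary.CountableIterationOfLift} and the sketch of Theorem \ref{Theorem.SC+MeasurableAbove}, yields an $N$-generic filter $G\of\bar\P$ with $G\in V$. Then $W=N[G]$ is an inner model, and for any $\varphi$ forced by $\bar\P$ over $V_{\bar\delta}^{\bar M}$ we have $V_{\bar\delta}^{\bar M}\satisfies(\one\forces_{\bar\P}\varphi)$, hence $N\satisfies(\one\forces_{\bar\P}\varphi)$ by the elementarity $V_{\bar\delta}^{\bar M}\elesub N$ (the forcing relation being uniformly definable with $\bar\P$ as a parameter), so $W\satisfies\varphi$. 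The step I expect to be the main obstacle is precisely this elementarity $V_\delta^M\elesub N$: it is what lets the sentences forced only over the set-sized model $V_\delta^M$ hold in the full proper-class inner model $N[G]$, rather than merely in its rank-initial segment $V_\delta^{N[G]}$.
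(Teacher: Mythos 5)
Your proposal is correct and takes essentially the same route as the paper's own proof: collapse a countable elementary substructure containing $\P$ (iterable by Lemma \ref{le:substructure}), iterate it through all the ordinals, take as the inner model the union of the rank-initial segments below the critical sequence, and diagonalize against the countably many dense sets in $V$ to produce the generic filter. The paper's model $W=\bigcup_{\xi\in\Ord}j_{0\xi}(V_{\delta_0}^{M_0})$ is exactly your $N$, and its key step is the same elementarity $V_{\delta_0}^{M_0}\prec W$ together with $V_{\delta_0}^{W}=V_{\delta_0}^{M_0}$ that you isolate as the crucial point.
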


\begin{proof} Let $\<M_0,\delta_0,U_0>$ be obtained by collapsing a countable
elementary substructure of $\<M,\delta,U>$ containing $\P$.
By Lemma \ref{le:substructure}, $\<M_0,\delta_0,U_0>$ is
iterable. Also, if $\Q$ is the collapse of the poset $\P$,
then by elementarity $\Q$ forces the same sentences over
$V_{\delta_0}^{M_0}$ that $\P$ forces over $V_\delta^M$.
Let $\{j_{\xi\eta}:M_\xi\to M_\eta \mid \xi < \eta \in
\Ord\}$ be the corresponding directed system of iterated
ultrapowers of $M_0$, and consider the inner model
$W=\bigcup_{\xi\in\Ord}j_{0\xi}(V^{M_0}_{\delta_0})$, which
is the cumulative part of the iteration lying below the
critical sequence. Since $V^{M_0}_{\delta_0}\prec W$ and
$V^{W}_{\delta_0}=V^{M_0}_{\delta_0}$, it follows that $\Q$
forces the same sentences over $V_{\delta_0}^{M_0}$ as over
$W$, and these are the same as forced by $\P$ over
$V_\delta^M$. Since $\Q$ lies below the critical point
$\delta_0$ of the iteration, the model $W$ contains only
countably many dense subsets of $\Q$ and so we can build a
$W$-generic filter $G$ directly. Thus, the model $W[G]$, an
inner model of $V$, satisfies the requirement of the
theorem.
\end{proof}

Let us now apply this theorem to the case of an
indestructible supercompact cardinal.

\begin{theorem}\label{Theorem.InnerModelSCiterable}
If $\kappa$ is $\ltdelta$-supercompact for a weakly
iterable cardinal $\delta$ above $\kappa$, then there is an
inner model in which $\kappa$ is an indestructible
supercompact cardinal.
\end{theorem}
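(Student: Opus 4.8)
The plan is to apply the third proof method, forcing the Laver preparation of $\kappa$ over an iterable structure and then iterating, arranging that $\kappa$ sits below the critical point of the iteration so that it is fixed throughout. First I would manufacture the structure from the hypotheses: since $\delta$ is weakly iterable, fix an iterable $\<M,\delta,U>$ with $V_\delta\in M$, so that $\delta$ is inaccessible in $M$ and hence $V_\delta^M=V_\delta$; and since $\kappa$ is $\ltdelta$-supercompact in $V$, with each witnessing measure lying in $V_\delta$, the cardinal $\kappa$ is fully supercompact in $V_\delta^M$. Let $\P\in V_\delta^M$ be the ($\ltkappa$-superfriendly) Laver preparation of $\kappa$, a poset of size $\kappa<\delta$; by Laver's theorem, forcing with $\P$ over $V_\delta^M$ makes $\kappa$ indestructibly supercompact. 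The quickest conclusion is then immediate from Theorem \ref{Theorem.Iterable}: since ``there is an indestructible supercompact cardinal'' is a sentence forced by $\P$ over $V_\delta^M$, there is an inner model with such a cardinal. To obtain the sharper statement that $\kappa$ \emph{itself} is indestructible in an inner model, as emphasized in the introduction, I would instead run the force-then-iterate machinery of Theorem \ref{th:forcing}.

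For this, collapse an elementary substructure of $\<M,\delta,U>$ to a small iterable structure $\<M_0,\delta_0,U_0>\models$``I am $H_{\delta_0^\plus}$'', arranged with $\kappa<\delta_0$ (so that the critical point $\delta_0$ of the iteration fixes $\kappa$) and with $\kappa$ still supercompact in $V_{\delta_0}^{M_0}$, where $\P$, now of size $\kappa<\delta_0$ in $M_0$, is still its Laver preparation. Because $|\P|<\delta_0$, the poset $\P$ is $j_{01}$-useful in the trivial way and the external genericity hypothesis of Theorem \ref{th:forcing} is automatic; so once an $M_0$-generic filter $G_0\of\P$ is found in $V$, the first-step lift $j_{01}^*\colon M_0[G_0]\to M_1[G_0]$ is automatic (as $j_{01}(\P)=\P$), and Theorem \ref{th:forcing} propagates it to a step-by-step lift of the entire iteration. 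I would then set $W=\Union_{\xi\in\Ord}j_{0\xi}^*(V_{\delta_0}^{M_0}[G_0])$. Since the iteration has critical point $\delta_0>\kappa$, both $\kappa$ and $V_{\delta_0}^{M_0}[G_0]$ are fixed and $V_{\delta_0}^{M_0}[G_0]\elesub W$; as $\kappa$ is indestructibly supercompact in $V_{\delta_0}^{M_0}[G_0]$ by Laver's theorem, the same holds in the inner model $W$.

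The crux, and the step I expect to be the main obstacle, is the construction in $V$ of the $M_0$-generic filter $G_0\of\P$. The difficulty is genuine: $\P$ adds subsets cofinally below $\kappa$, so it is nowhere $\ltkappa$-closed, and no closure argument can meet its $\kappa$ many dense sets directly; moreover, if $M_0$ agreed with $V$ on $V_\kappa$ there could be no such generic at all, so $M_0$ must be kept correspondingly thin below $\kappa$ while still computing $\kappa$ as supercompact. This is precisely the situation treated by the construction of generic filters over thin inner models in the work of Friedman and of Dobrinen--Friedman cited in the introduction, and I would carry it out using the genuine supercompactness of $\kappa$ in $V$ (not merely in $V_\delta$): fixing a supercompactness embedding $j\colon V\to N$ with $\cp(j)=\kappa$ and using the factorization $j(\P)\cong\P*\Ptail$ with $\Ptail$ sufficiently closed in $N$, one threads the $M_0$-side requirements through the closed tail to assemble $G_0$. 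Reconciling the preservation of $\kappa$ with the existence of this generic is exactly where the extra strength of the hypothesis is spent.
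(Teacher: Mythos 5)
Your first paragraph reproduces the paper's own opening move: since the Laver preparation $\P$ of $\kappa$ is small relative to $\delta$, Theorem \ref{Theorem.Iterable} at once gives an inner model with \emph{an} indestructible supercompact cardinal. But the theorem asserts that $\kappa$ \emph{itself} is indestructibly supercompact in an inner model, and your route to that stronger conclusion has a genuine gap at exactly the step you flag, the construction in $V$ of an $M_0$-generic filter $G_0\of\P$. (Granted $G_0$, the rest of your plan is fine: with $|\P|<\delta_0$ the usefulness and external genericity hypotheses of Theorem \ref{th:forcing} are automatic, as the paper notes.) The proposed repair fails for three reasons. First, the hypothesis gives only that $\kappa$ is $\ltdelta$-supercompact, so the ``genuine supercompactness of $\kappa$ in $V$'' you invoke is simply not available; only $\theta$-supercompactness for $\theta<\delta$ is. Second, and fatally, the factorization $j(\P)\cong\P*\Ptail$ with closed tail is precisely the engine of Theorem \ref{Theorem.IndestructibleSCrobust}, and closure of the tail only permits diagonalizing dense sets of forcing \emph{delayed beyond} $\theta$; that is exactly why that method produces inner models in which $j(\kappa)$, not $\kappa$, is the indestructible supercompact. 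The dense sets you must meet live on the first factor $\P$, the un-delayed preparation at $\kappa$, which adds Cohen subsets to cofinally many $\gamma<\kappa$ and is nowhere closed; the $\P$-part of any condition of $j(\P)$ is a single condition of $\P$, so no master-condition or closure argument can meet $\kappa$ many dense sets there. Third, your own observation makes the bind sharp: since $M_0$ is transitive and must have $\pi(\kappa)=\kappa$, we get $\kappa\of M_0$, so $M_0$ has size at least $\kappa$ and carries $\kappa$ many dense sets, while $M_0$ must stay thin below $\kappa$ (else proper initial segments of $\P$ would acquire fully $V$-generic filters inside $V$, which is impossible). In this situation none of the three known mechanisms for building generics applies---countably many dense sets, a poset closed in $V$, or a model closed under sequences---and the Friedman and Dobrinen--Friedman constructions you cite concern strategically closed or fusion-style forcing over canonical inner models, not this.

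The idea you are missing is that the paper never tries to keep $\kappa$ fixed through the collapse. It takes $M_0$ \emph{countable}, so that $\kappa$ collapses to a countable ordinal $\kappa_0<\delta_0$ and the needed generic exists trivially, yielding (exactly as in your first paragraph) an inner model $W_0$ in which the countable ordinal $\kappa_0$ is indestructibly supercompact. The damage is then repaired afterwards, \emph{inside} $W_0$: since $\kappa_0$ is in particular measurable in $W_0$, one iterates a normal measure of $W_0$ on $\kappa_0$ through the ordinals. The critical sequence $\{\kappa_\alpha\st\alpha\in\Ord\}$ of this internal iteration is a closed unbounded class containing all cardinals of $V$ (above a countable bound), so $\kappa$ appears on it as its $\kappa^\th$ member, $\kappa=\kappa_\kappa$. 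Letting $j:W_0\to W_\kappa$ be the $\kappa^\th$ iterate, we have $j(\kappa_0)=\kappa$, and by elementarity $W_\kappa$ is an inner model in which $\kappa$ itself is indestructibly supercompact. This collapse-then-reinflate maneuver entirely replaces the generic-filter construction that your proposal cannot supply.
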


\begin{proof}
Suppose that $\kappa$ is $\ltdelta$-supercompact for a
weakly iterable cardinal $\delta$ above $\kappa$ and the
weak iterability of $\delta$ is witnessed by an iterable
structure $\<M,\delta,U>$, with $V_\delta\in M$. In
particular, $\kappa$ is $\ltdelta$-supercompact in $M$.
Note that the Laver preparation $\P$ of $\kappa$ is small
relative to $\delta$ in $M$. Thus, by Theorem
\ref{Theorem.Iterable}, there is an inner model $W_0$
satisfying the theory forced by $\P$ over $V_\delta$. The
forcing $\P$, of course, makes $\kappa$ indestructibly
supercompact in $V_\delta^\P$, and so the inner model $W_0$
has an indestructible supercompact cardinal $\kappa_0$.

In order to prove the full claim, we must find a $W$ in
which $\kappa$ itself is indestructibly supercompact. For
this, let us look more closely at how the inner model $W_0$
arises from the proof of Theorem \ref{Theorem.Iterable}.
Specifically, the indestructible supercompact cardinal
$\kappa_0$ of $W_0$ arises inside a countable iterable
structure $M_0$, obtained by a Mostowski collapse of a
countable structure containing $\kappa$, and $\kappa_0$ is
below the critical point $\delta_0$ of the iteration. Thus,
$\kappa_0$ is not moved by the iteration and is therefore a
countable ordinal in $V$, even though it is indestructibly
supercompact in $W_0$. Since in particular $\kappa_0$ is
measurable in $W_0$, we may consider the internal system of
embeddings obtained by iterating a normal measure on
$\kappa_0$ in $W_0$. The successive images of $\kappa_0$
lead to the critical sequence $\{\kappa_\alpha\st \alpha\in
\Ord\}$, which is a closed unbounded class of ordinals,
containing all cardinals of $V$. It follows that $\kappa$
itself appears on this critical sequence, as the
$\kappa^\th$ element $\kappa=\kappa_\kappa$. In particular,
if $j:W_0\to W_\kappa$ is the $\kappa^\th$ iteration of the
normal measure, then $j(\kappa_0)=\kappa$, and so by
elementarity, $W_\kappa$ is an inner model in which
$\kappa$ itself is an indestructible supercompact cardinal.
\end{proof}

It should be clear that once there is an inner model $W$
containing an indestructible supercompact cardinal, and
this cardinal is a mere countable ordinal in $V$, then in
fact it can be arranged that any desired cardinal of $V$ is
an indestructible supercompact cardinal in an inner model.
For example, this argument shows that if there is a
cardinal that is supercompact up to a weakly iterable
cardinal, then there are inner models $W$ in which
$\aleph_1^V$ is indestructibly supercompact, or
$\aleph_2^V$ or $\aleph_\omega^V$ is indestructibly
supercompact, and so on, as desired.

The method also provides an answer to Test Question
\ref{TestQuestion.PFA}.

\begin{theorem}\label{Theorem.InnerModelPFA}
 If $\kappa$ is $\ltdelta$-supercompact for a weakly iterable cardinal
 $\delta$ above $\kappa$, then there is an inner model of the \PFA.
\end{theorem}

\begin{proof}
Let  $\<M,\delta,U>$ be an iterable structure containg
$V_\delta$. Then $\kappa$ is supercompact in $V_\delta$,
and so the Baumgartner forcing $\P \in V_\delta$ forces the
\PFA\ over $V_\delta$. Thus, by Theorem
\ref{Theorem.Iterable}, there is an inner model of the
\PFA.
\end{proof}

Let us return to Test Question
\ref{TestQuestion.Supercompact+GCH}, where we aim to
produce an inner model with a supercompact cardinal and the
full \GCH. In Theorem
\ref{Theorem.InnerModelSC+2^kappa=kappa+}, we approached
this, by finding inner models with a supercompact cardinal
$\kappa$ such that $2^\kappa=\kappa^\plus$ or such that
$2^\kappa=\kappa^\plusplus$, and the proof generalized to
get various \GCH\ patterns at or above $\kappa$. The proofs
of those theorems, however, relied on the friendliness of
the iteration up to $\kappa$, and so seem unable to attain
the full \GCH. For example, if \CH\ fails in $V$, then
there can be no friendly forcing of the \GCH. The third
proof method, however, does work to produce such an inner
model. We cannot apply Theorem \ref{Theorem.Iterable}
directly to the case of the poset forcing the \GCH, since
it is a class forcing over $V_\delta$. Following the proof
of Theorem \ref{Theorem.Iterable}, we would need at the
last step to obtain a generic for a class forcing over the
inner model $W$, and there is no obvious reason to suppose
that such a $W$-generic can be constructed. Instead, using
Theorem \ref{th:forcing}, we shall follow the modified
strategy of forcing over the countable iterable structure
first and then iterating out to produce the inner model.
Note that if the \GCH\ fails in $V$, then for large
$\theta$ one cannot expect to find the \GCH\ in the robust
type of inner models $W$ for which $W^\theta\of W$, since
such a property would inject the \GCH\ violations from $V$
into $W$.

The following theorem generalizes Theorem
\ref{Theorem.Iterable} to the case of class forcing with
respect to $V_\delta$.

\begin{theorem}\label{th:forcingiterable}
If $\<M,\delta,U>$ is iterable and $\P\of V_\delta^M$ is a
poset in $M$ and useful for the ultrapower by $U$, then
there is an inner model satisfying every sentence forced by
$\P$ over $V_\delta^M$.
\end{theorem}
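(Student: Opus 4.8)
The plan is to mimic the proof of Theorem \ref{Theorem.Iterable}, but since $\P$ may now have size $\delta$ in $M$ and so behaves as a class forcing over $V_\delta^M$, I cannot hope to add a generic filter at the very end over the limit model: there is no reason a generic for a class forcing over such a $W$ should exist. Instead, following the strategy flagged before the theorem, I would force \emph{first} over a countable iterable structure and only then iterate, invoking Corollary \ref{Corollary.CountableIterationOfLift} to guarantee that this force-then-iterate procedure produces a coherent inner model, that is, that the iteration of the lift is a lift of the iteration.

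First I would reduce to the countable case. Collapsing a countable elementary substructure of $\<M,\delta,U>$ that contains $\P$ yields, by Lemma \ref{le:substructure}, a countable iterable structure $\<M_0,\delta_0,U_0>$ together with the collapsed poset $\Q$. Passing if necessary to $\her{\delta_0}^{M_0}$ — whose iterated ultrapowers by $U_0$ embed naturally into those of $M_0$ and so remain well-founded, $U_0$ being weakly amenable — I may assume $\<M_0,\delta_0,U_0>\models$``I am $\her{\delta_0}$'', as demanded by Corollary \ref{Corollary.CountableIterationOfLift}. Since $j$-usefulness is a first-order property of the structure $\<M,\delta,U,\P>$, the relevant ultrapower and the factorization $j(\P)\cong\P*\Ptail$ being uniformly definable from $U$, elementarity guarantees that $\Q$ is $\delta_0$-c.c.\ and useful for the ultrapower of $M_0$ by $U_0$, and that $\Q$ forces over $V_{\delta_0}^{M_0}$ exactly the sentences that $\P$ forces over $V_\delta^M$.

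Next I would apply Corollary \ref{Corollary.CountableIterationOfLift} to $\<M_0,\delta_0,U_0>$ and $\Q$. Since $M_0$ is countable, the countably many dense sets and external genericity requirements present no difficulty, so the corollary produces an $M_0$-generic filter $G\of\Q$ and an $M_0[G]$-ultrafilter $U_0^*$ extending $U_0$ for which $\<M_0[G],\delta_0,U_0^*>$ is iterable and its entire iteration $\{j_{0\xi}^*:M_0[G]\to M_\xi[G_\xi]\mid\xi\in\Ord\}$ is a step-by-step lift of the iteration of $\<M_0,\delta_0,U_0>$. I then set $W=\Union_{\xi\in\Ord}j_{0\xi}^*(V_{\delta_0}^{M_0[G]})$, the cumulative part lying below the critical sequence. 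Because $\delta_0$ is inaccessible in $M_0[G]$ we have $V_{\delta_0}^{M_0[G]}\models\ZFC$, and since all iterates are well-founded and the critical sequence is a closed unbounded class, $W$ is a transitive inner model of $\ZFC$ containing all the ordinals. As $\Q$ is $\delta_0$-c.c.\ and bounded below $\delta_0$, forcing with it disturbs $V_{\delta_0}$ only by adjoining $G$, so $V_{\delta_0}^{M_0[G]}=V_{\delta_0}^{M_0}[G]$, which satisfies every sentence forced by $\Q$ over $V_{\delta_0}^{M_0}$, equivalently by $\P$ over $V_\delta^M$. The embeddings $j_{0\xi}^*$ have critical point $\delta_0$ and hence fix $V_{\delta_0}^{M_0[G]}$ pointwise, giving $V_{\delta_0}^W=V_{\delta_0}^{M_0[G]}\elesub W$; so every such sentence holds in $W$, as required.

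I expect the main obstacle to be the descent of the hypotheses to the countable structure in the second step: one must check with care that weak amenability lets the passage to $\her{\delta_0}$ preserve both iterability and the ``$\her{\delta_0}$'' property required by Corollary \ref{Corollary.CountableIterationOfLift}, and that $j$-usefulness genuinely survives the collapse, i.e.\ that the factorization of $j(\P)$ and the strategic closure of its tail are expressible within $\<M,\delta,U,\P>$. Once the hypotheses of Corollary \ref{Corollary.CountableIterationOfLift} are secured, the remainder is the now-familiar bookkeeping for iterations of iterable structures, exactly parallel to Theorem \ref{Theorem.Iterable} but with the generic introduced at the bottom rather than at the top.
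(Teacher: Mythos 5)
Your proposal is correct and follows essentially the same route as the paper's proof: reduce to a countable iterable structure satisfying ``I am $H_{\delta^{+}}$'', invoke Corollary \ref{Corollary.CountableIterationOfLift} to force over it first and then iterate the lifted structure out of the universe, taking $W$ to be the union of the images of $V_{\delta_0}^{M_0[G_0]}$ along the lifted iteration. The only cosmetic differences are the order of the two reductions (the paper passes to $H_{\delta^{+}}^M$ before collapsing, you collapse first) and your phrase ``bounded below $\delta_0$'' --- it is the $\delta_0$-c.c.\ from usefulness, not boundedness of $\Q$ in $V_{\delta_0}$, that yields $V_{\delta_0}^{M_0[G]}=V_{\delta_0}^{M_0}[G]$ --- neither of which affects correctness.
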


\begin{proof}
We may assume without loss of generality that
$\<M,\delta,U>\models ``$I am $\her{\delta}"$. (If not,
replace $M$ with $H_{\delta^\plus}^M$ and observe that the
structure $\<H_{\delta^\plus}^M,\delta, U>$ remains
iterable since it has all the same functions $f:\delta\to
H_{\delta^\plus}^M$ as $M$ and therefore its iterates are
substructures of the corresponding iterates of $\<M,\delta,
U>$.) As in Theorem \ref{Theorem.Iterable}, let
$\<M_0,\delta_0,U_0>$ be a countable iterable structure
obtained by collapsing a countable elementary substructure
of $\<M,\delta, U>$ containing $\P$, and let $\Q$ be the
image of $\P$ under the collapse. Since $M_0$ is countable,
there is by Corollary
\ref{Corollary.CountableIterationOfLift} an $M_0$-generic
filter $G_0\of\Q$ and an $M_0[G_0]$-ultrafilter $U_0^*$
extending $U_0$ such that $\<M_0[G_0],\delta_0,U_0^*>$ is
iterable, and such that the iteration of $M_0[G_0]$ by
$U_0^*$ is a step-by-step lifting of the iteration of $M_0$
by $U_0$. Note that
$V_{\delta_0}^{M_0[G_0]}=V_{\delta_0}^{M_0}[G_0]$ satisfies
the theory forced by $\P$ over $V_\delta^M$. Let
$\{j_{\xi\eta}:M_\xi[G_\xi]\to M_\eta[G_\eta]\mid
\xi<\eta\in\Ord\}$ be the directed system of iterated
ultrapowers of $M_0[G_0]$, and consider
$W=\bigcup_{\xi\in\Ord} j_{0\xi}(V_{\delta_0}^{M_0[G_0]})$.
Since the iteration of $U_0^*$ lifts the iteration of $U_0$
on $M_0$ step-by-step, it follows that $W=\bar W[H]$, where
$\bar W=\bigcup_{\xi\in\Ord}j_{0\xi}(V_{\delta_0}^{M_0})$
and $H$ is the $\bar W$-generic filter arising from
$\bigcup_\xi j_{0\xi}(G_0)$ for the class forcing obtained
by $\bigcup_\xi j_{0\xi}(\Q)$.  By elementarity, $W$
satisfies the same sentences that are forced to hold over
$V_{\delta_0}^{M_0}$ by $\Q$, and these are the same as
those forced to hold over $V_\delta^M$ by $\P$.
\end{proof}

We may now apply Theorem \ref{th:forcingiterable} to
provide answers to Test Questions
\ref{TestQuestion.Supercompact+GCH} and
\ref{TestQuestion.SC+V=HOD}, from a stronger hypothesis.

\begin{theorem}\label{Theorem.InnerModelSC+GCH}
 If $\kappa$ is $\ltdelta$-supercompact for a weakly iterable cardinal
 $\delta$, then there is an inner model in which $\kappa$
 is supercompact and the \GCH\ plus $V=\HOD$ hold.
\end{theorem}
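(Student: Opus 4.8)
The plan is to combine Theorem \ref{th:forcingiterable} with the measure-iteration technique of Theorem \ref{Theorem.InnerModelSCiterable}. First I would fix an iterable structure $\<M,\delta,U>$ with $V_\delta\in M$ witnessing the weak iterability of $\delta$, so that $\kappa$, being $\ltdelta$-supercompact, is supercompact in $V_\delta=V_\delta^M$. The crucial ingredient is a class forcing $\P$ over $V_\delta$, of length $\delta$ (hence a subset of $V_\delta^M$), that simultaneously forces the \GCH\ at every cardinal and $V=\HOD$, while preserving the supercompactness of $\kappa$. Such a forcing is obtained by a reverse Easton iteration that forces the \GCH\ and codes the universe into a definable pattern compatible with the \GCH --- for example a Brooke-Taylor-style coding into $\diamondsuit$-sequences at successor cardinals --- chosen so that the supercompactness of $\kappa$ survives by the usual lifting arguments.

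The key verification is that $\P$ is useful for the ultrapower $j:M\to N=\Ult(M,U)$, which has critical point $\delta$. Because $\delta$ is ineffable, hence Mahlo, in $M$, the Easton iteration $\P$ is $\delta$-c.c.\ in $M$; and since $j$ agrees with the identity below $\delta$, the image $j(\P)$ restricts to $\P$ on its first $\delta$ stages and factors as $j(\P)\cong\P*\Ptail$, where the tail iteration begins at stage $\delta$ and is therefore forced to be $\leqdelta$-strategically closed in $N$. Thus $\P$ is $j$-useful, and Theorem \ref{th:forcingiterable} produces an inner model $W_0$ satisfying every sentence forced by $\P$ over $V_\delta^M$, in particular $\GCH+V=\HOD$.

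Next, as in Theorem \ref{Theorem.InnerModelSCiterable}, I would arrange the countable structure $\<M_0,\delta_0,U_0>$ whose iterates build $W_0$ to contain $\kappa$, so that $\kappa$ collapses to some $\kappa_0<\delta_0$; tracking $\kappa$ through the collapse shows that $\kappa_0$ is supercompact in $W_0$. Since $\kappa_0$ lies below the critical point $\delta_0$ of the iteration producing $W_0$, it is fixed by that iteration and is merely a countable ordinal in $V$, yet it is measurable in $W_0$. Iterating a normal measure on $\kappa_0$ inside $W_0$ yields a critical sequence that is closed and unbounded and contains every $V$-cardinal, so $\kappa$ appears as its $\kappa^\th$ entry and the $\kappa^\th$ iterate $j_{0\kappa}:W_0\to W_\kappa$ satisfies $j_{0\kappa}(\kappa_0)=\kappa$. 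As $j_{0\kappa}$ is elementary and $W_0$ models ``$\kappa_0$ is supercompact'' together with $\GCH+V=\HOD$, the inner model $W_\kappa$ models ``$\kappa$ is supercompact'' together with $\GCH+V=\HOD$, as required.

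The hard part will be the first step, namely producing a single forcing $\P$ that forces $\GCH$ and $V=\HOD$ together, preserves the supercompactness of $\kappa$, and is $j$-useful. The tension is that a naive coding of the universe into the continuum function would itself violate the \GCH, so the coding must be routed through a definable feature (such as $\diamondsuit$ or the nonstationarity of certain sets) that leaves the \GCH\ intact, while still respecting the Easton support and closure structure needed for the $\delta$-c.c.\ of $\P$ and the $\leqdelta$-strategic closure of the tail. Once such a forcing is in hand, the iteration machinery is entirely supplied by Theorems \ref{th:forcingiterable} and \ref{Theorem.InnerModelSCiterable}.
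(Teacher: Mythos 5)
Your proposal is correct and follows essentially the same route as the paper's proof: the same iterable-structure setup, the same appeal to Theorem \ref{th:forcingiterable} with a $\delta$-c.c.\ (via Mahloness of $\delta$ in $M$) Easton-support class iteration forcing \GCH\ together with a \GCH-compatible coding of the universe (the paper likewise invokes the Brooke-Taylor $\Diamond^*_\gamma$-style coding combined with Reitz's method, precisely to avoid the tension you identify with continuum-function coding), and the same normal-measure iteration from the second part of Theorem \ref{Theorem.InnerModelSCiterable} to arrange that $\kappa$ itself is the supercompact cardinal of the inner model. The step you flag as the remaining hard part is exactly what the paper resolves by citation to that coding literature, so your outline matches the paper's argument in both structure and substance.
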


\begin{proof} Let $\<M,\delta,U>$ be an iterable structure containing
$V_\delta$, and as before, assume without loss of
generality that $M\models``\text{I am }H_{\delta^+}"$.
Observe that the canonical class forcing of the \GCH\ is
definable over $V_\delta$ and useful for the ultrapower
embedding. Note that although $\delta$ may be singular in
$V$, it is Mahlo (and more) in $M$, and so the forcing is
$\delta$-c.c$.$ inside $M$. By Theorem
\ref{th:forcingiterable}, there is an inner model with a
supercompact cardinal and the \GCH. To obtain an inner
model where $\kappa$ itself is supercompact, simply follow
the second part of the proof of Theorem
\ref{Theorem.InnerModelSCiterable}. One can similarly
obtain an inner model satisfying $V=\HOD$ without the \GCH\
by coding sets into the continuum function, making
essentially the same argument. (See, e.g., the coding
method used in \cite[Theorem 11]{Reitz2006:Dissertation} or
\cite[Theorem 11]{Reitz2007:TheGroundAxiom}.) If
$\GCH+V=\HOD$ is desired, as in the statement of the
theorem, then one should use a coding method compatible
with the \GCH. For example, the $\Diamond_\gamma^*$ coding
method used in
\cite{Brooke-Taylor2009:LargeCardinalsAndWellOrderingOnTheUniverse},
in conjunction with the proof of \cite[Theorem
11]{Reitz2006:Dissertation} or \cite[Theorem
11]{Reitz2007:TheGroundAxiom}, forces $\GCH+V=\HOD$ while
preserving supercompactness, and has the desired closure
properties for this argument.
\end{proof}

The hypotheses of Theorems
\ref{Theorem.InnerModelSCiterable},
\ref{Theorem.InnerModelPFA} and
\ref{Theorem.InnerModelSC+GCH} can be improved slightly,
since it is not required that $\delta$ is weakly iterable,
but rather only that \bigskip

(*) $\kappa$ is $\ltdelta$-supercompact inside an iterable
structure $\<M,\delta,U>$ where $V_\delta^M$ exists.
\bigskip

\noindent It is irrelevant assuming (*) whether
$V_\delta^M$ is the true $V_\delta$, since the only use of
that in our argument was to ensure that $\kappa$ was
$\ltdelta$-supercompact in $M$.

Next, we improve the iteration method to find more robust
inner models, which not only satisfy the desired theory,
but which also agree with $V$ up to $\delta$. This sort of
additional feature cannot be attained by iterating a
countable model out of the universe, which is ultimately
how our earlier instances of the iteration method
proceeded.

Suppose as usual that $\<M,\delta,U>$ is a structure where
$M\models{\rm ZFC^-}$, $\delta$ is a cardinal in $M$, and
$U$ is a weakly amenable $M$-ultrafilter. Suppose further
that $V_\delta^M$ exists. As a shorthand, let us refer to
these structures as \emph{weakly amenable}. A weakly
amenable structure that is closed under
$\ltdelta$-sequences is automatically iterable. This is
because it will be correct about the countable completeness
of the ultrafilter, which suffices for iterability (see
\cite{kunen:ultrapowers}). Moreover, closure under
$\ltdelta$-sequences implies that $\delta$ is inaccessible
and hence $V^M_\delta=V_\delta$. Thus, if there exists a
weakly amenable structure with $M^{<\delta}\subseteq M$,
then $\delta$ is weakly iterable. The existence of these
structures, however, has a significantly larger consistency
strength than the existence of a weakly iterable cardinal
that is between Ramsey and measurable cardinals (see
\cite{gitman:ramsey}).

\begin{theorem}\label{Theorem.UsefulSuperfriendly}
Suppose $\<M,\delta,U>$ is weakly amenable with
$M^\ltdelta\of M$. Suppose that $\P\subseteq V_\delta$ is a
poset in $M$ such that for every $\gamma<\delta$, there is
a condition $p\in \P$ such that $\P\restrict p$ is
$\leqgamma$-strategically closed and useful for the
ultrapower of $M$ by $U$. Then there is an inner model $W$
of $V$ satisfying every sentence forced by $\P$ over
$V_\delta$ and with $V_\delta^W=V_\delta$.
\end{theorem}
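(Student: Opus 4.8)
The plan is to run the iteration method of Theorems~\ref{th:forcingiterable} and~\ref{th:forcing}, but applied directly to the given closed structure $\<M,\delta,U>$ rather than to a countable collapse of it; keeping $M$ itself is exactly what will let the resulting inner model agree with $V$ up to $\delta$. As usual we may assume $\<M,\delta,U>\satisfies``$I am $\her{\delta}"$, replacing $M$ by $H_{\delta^\plus}^M$ if necessary, which preserves both weak amenability and the closure $M^\ltdelta\of M$. Since $M^\ltdelta\of M$, the structure is iterable (as noted above), $\delta$ is inaccessible in $V$, and $V_\delta^M=V_\delta$. Writing $M_0=M$ and letting $j_{01}:M_0\to M_1=\Ult(M_0,U)$ be the first ultrapower, I would observe that the closure passes to the ultrapower, so that $M_1^\ltdelta\of M_1$ as well; and since $\delta$ is the critical point, $V_\delta^{M_1}=V_\delta$.

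First I would build, inside $V$, an $M_0$-generic filter $G_0\of\P$ together with its lift $G_1=j_{01}^*(G_0)\of j_{01}(\P)$, where $G_1$ is $M_1$-generic and moreover meets the external dense sets $D_a$ (for $a\in M_0$) described in the proof of Theorem~\ref{th:forcing}. Usefulness gives the factorization $j_{01}(\P)\cong\P*\Ptail$, with $\P$ being $\delta$-c.c.\ in $M_0$ and $\Ptail$ forced to be $\leqdelta$-strategically closed in $M_1$, so I would build $G_1=G_0*\Gtail$ in two stages. Because $\P$ is $\delta$-c.c., its maximal antichains have size less than $\delta$ and hence lie in $V_\delta$, so there are only $\delta$ of them; the superfriendliness supplies, below suitable conditions, the strategic closure needed to pass the limit stages below $\delta$, while $M_0^\ltdelta\of M_0$ keeps the partial descending sequences inside $M_0$, so a diagonalization of length $\delta$ produces $G_0$ in $V$. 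The generic $\Gtail$ for the $\leqdelta$-strategically closed tail is then constructed by the same kind of diagonalization, now also arranging that $G_1$ sits below the $X_m$-generic conditions witnessing the density of each $D_a$; here the strategic closure of the tail, together with the closure of $M_1$, is what lets these conditions be met, just as superfriendliness was used in the diagonalizations of Theorems~\ref{Theorem.IndestructibleSCrobust} and~\ref{Theorem.FriendlySC}.

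With $G_0$ and $G_1$ in hand, Theorem~\ref{th:forcing} applies: the lift $j_{01}^*:M_0[G_0]\to M_1[G_1]$ is the ultrapower by a weakly amenable $M_0[G_0]$-ultrafilter $U_0^*$ extending $U$, the structure $\<M_0[G_0],\delta,U_0^*>$ is iterable, and the entire iteration of $U_0^*$ is a step-by-step lift of the iteration of $U$. As in Theorem~\ref{th:forcingiterable} I would then set $W=\Union_{\xi\in\Ord}j_{0\xi}(V_\delta^{M_0[G_0]})=\Vbar[H]$, where $\Vbar=\Union_{\xi\in\Ord}j_{0\xi}(V_\delta^{M_0})$ is the cumulative part of the plain iteration and $H=\Union_\xi j_{0\xi}(G_0)$ is the $\Vbar$-generic filter assembled from the iteration. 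By elementarity $W$ satisfies exactly the sentences forced by $\P$ over $V_\delta^{M_0}=V_\delta$, which is the desired theory. For the closure clause, $\delta$ is the critical point, so $V_\delta$ is fixed and $V_\delta\of\Vbar\of W$; on the other hand $M_0\of V$ and $G_0\in V$ give $M_0[G_0]\of V$ and hence $W\of V$, so that $V_\delta^W\of V_\delta^V=V_\delta$. Thus $V_\delta^W=V_\delta$, as required.

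The main obstacle is the construction of the second paragraph: carrying out the $V$-side diagonalization with only $M^\ltdelta\of M$ rather than full $\delta$-closure. The $\delta$-c.c.\ of the $\P$-factor safely confines the difficulty to the strategically closed tail and to the external genericity requirement on $G_1$, but meeting these while passing limit stages of cofinality $\delta$ is delicate precisely because a length-$\delta$ descending sequence need not lie in $M_1$. The superfriendliness---furnishing, below suitable conditions, strategic closure of every degree $\ltdelta$, and, after applying $j_{01}$, up to $j_{01}(\delta)$ inside $M_1$---is the tool designed to navigate these limits, and verifying that it yields the full genericity demanded of $G_1$ is the technical heart of the argument.
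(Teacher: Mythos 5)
There is a genuine gap, and it sits exactly where you flagged your ``main obstacle'': the $M_0$-generic filter $G_0\of\P$ that your second paragraph sets out to build in $V$ does not exist, precisely in the situations this theorem is designed for. Superfriendliness gives, for each $\gamma<\delta$, a condition $p_\gamma$ with $\P\restrict p_\gamma$ being $\leqgamma$-strategically closed, but these conditions for different $\gamma$ are in general pairwise incompatible (in the applications they are distinct branches of a lottery sum, or conditions delaying an Easton-support iteration to different starting points), so they cannot be strung together into a single descending diagonalization of length $\delta$; no single condition gives closure up to $\delta$, since in the applications $\P$ adds new subsets to cardinals cofinally below $\delta$ below every condition. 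Worse, the obstruction is not merely that the diagonalization is hard to run: since $M_0^\ltdelta\of M_0$ gives $V_\delta^{M_0}=V_\delta$ (the true one), $M_0$ contains all of $P(\gamma)^V$ for every $\gamma<\delta$. Any $M_0$-generic filter must enter a part of the forcing adding a new subset of some $\gamma<\delta$, and for each $y\in P(\gamma)^{M_0}=P(\gamma)^V$ the set of conditions forcing the generic subset of $\gamma$ to differ from $\check y$ is dense and lies in $M_0$; genericity would thus make that subset differ from every subset of $\gamma$ in $V$, which is impossible for a filter lying in $V$. Your own last paragraph exhibits the same contradiction from the other side: you conclude $V_\delta\of W\of V$ and $V_\delta^W=V_\delta$, while also having $V_\delta^{M_0[G_0]}=V_\delta[G_0]\of W$, which would put new subsets of cardinals below $\delta$ into $V_\delta^W$.

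This is exactly why the paper lifts the \emph{second} step of the iteration rather than the first. After first reducing to $|M|=\delta$ (a step you skipped and which is also needed: your antichains-live-in-$V_\delta$ counting handles only the $\delta$-c.c.\ part, not the dense sets of the strategically closed tail in $M_1$, so one needs $|M_1|=|M_2|=\delta$ to have only $\delta$ many dense sets to meet), the paper never forces over $M_0$ at all: it forces over $M_1$ with $j_{01}(\P)$, whose superfriendliness by elementarity now reaches up to $j_{01}(\delta)>\delta$, so that a \emph{single} condition $p\in j_{01}(\P)$ makes $j_{01}(\P)\restrict p$ genuinely $\ltdelta$-strategically closed in $V$ (using $M_1^\ltdelta\of M_1$). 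One then diagonalizes against the $\delta$ many dense subsets of $j_{02}(\P)$ in $M_2$, folding in the external sets $D_a$, to get an $M_2$-generic $G_2$, sets $G_1=G_2\restrict j_{01}(\delta)$, lifts $j_{12}$ to $j_{12}^*:M_1[G_1]\to M_2[G_2]$, and applies Theorem \ref{th:forcing} to iterate $j_{12}^*$ out of the universe; the resulting model still contains $H_{\delta^\plus}^M\supseteq V_\delta$, giving $V_\delta^W=V_\delta$. The shift by one ultrapower step is not a stylistic variant of your plan---it is what makes the required generic filter constructible at all.
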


\begin{proof}
As usual, without loss of generality, we assume that
$M\models``\text{I am }H_{\delta^+}"$. We may also assume
that $M$ has size $\delta$, since if necessary, we may
replace $M$ with an elementary substructure
$M^*=\Union_\omega M_n$, where each $M_n\prec M$ of size
$\delta$ is constructed in the ultrapower so that $M_n\cap
U\in M_{n+1}$, and observe that the structure $\<
M^*,\delta, U>$ remains iterable by Lemma
\ref{le:substructure}. The hypothesis on $\P$ is a
superfriendly version of usefulness. Consider the first two
steps of the iteration
$$\begin{diagram}
M=M_0 & \rTo^{j_{01}} & M_1 & \rTo^{j_{12}} & M_2 .\\
\end{diagram}$$
Our strategy will be to lift the {\it second} step of the
iteration. We shall produce in $V$ a lift
$j_{12}^*:M_1[G_1]\to M_2[G_2]$, where $G_1\of j_{01}(\P)$
is $M_1$-generic and $j^*_{12}(G_1)=G_2$ is $M_2$-generic
for $j_{02}(\P)$, while also satisfying the extra
genericity requirement of Theorem \ref{th:forcing}. By that
theorem, therefore, the lift will be iterable and the
desired inner model will be obtained by iterating it out of
the universe.

To begin, note that the structure $\<M_1,\delta_1,U_1>$
arising from the ultrapower of $\<M,\delta,U>$ is certainly
iterable, since it was obtained after one step of the
iterable structure $\<M,\delta,U>$. In addition, the
assumptions on $M_0$ ensure that $M_1^\ltdelta\of M_1$ and
$|M_1|=\delta$, and also that $M_2^\ltdelta\of M_2$ and
$|M_2|=\delta$. By the superfriendly assumption on $\P$,
and using elementarity, we may find a condition $p\in
j_{01}(\P)$ below which $j_{01}(\P)$ is
$\ltdelta$-strategically closed in $M_1$, and hence truly
$\ltdelta$-strategically closed. By definition of
usefulness, $\P$ has $\delta$-c.c.\ in $M_0$ and
$j_{01}(\P)$ factors in $M_1$ as $\P*\Ptail$ with $\Ptail$
forced to be ${\leq}\delta$-strategically closed. By
elementarity, it follows that $j_{01}(\P)$ has
$j_{01}(\delta)$-c.c.\ in $M_1$ and $j_{01}(\P)$ factors in
$M_2$ as $j_{01}(\P)*j_{01}(\Ptail)$ with $j_{01}(\Ptail)$
forced to be ${\leq}j_{01}(\delta)$-strategically closed,
and hence $j_{01}(\P)$ is useful for $j_{12}$. It follows
that below the condition $(p,\dot \one_{j_{01}(\Ptail)})$,
the poset $j_{02}(\P)$ is $\ltdelta$-strategically closed.
Since there are only $\delta$ many dense subsets of
$j_{02}(\P)$ in $M_2$ and $M_2^\ltdelta\of M_2$, we may
diagonalize to find an $M_2$-generic filter $G_2\of
j_{02}(\P)$ below $p$ in $V$. It follows that $G_1=
G_2\restrict j_{01}(\delta)$ is $M_1$-generic for
$j_{01}(\P)$, and we may lift the embedding $j_{12}$ to
$j_{12}^*:M_1[G_1]\to M_2[G_2]$. We may furthermore arrange
in the diagonalization that $G_2$ also meets all the
external dense sets $D_a$ arising in Theorem
\ref{th:forcing}, since there are only $\delta$ many such
additional sets, and they can simply be folded into the
diagonalization. Thus, by Theorem \ref{th:forcing}, the
lift $j_{12}^*$ is iterable. Let $\{j_{1\xi}^*:M_1[G_1]\to
M_\xi[G_\xi]\}$ be the corresponding iteration, and let
$W=\Union_\xi V_{j_{1\xi}(\delta_1)}^{M_\xi[G_\xi]}$ be the
resulting inner model. This is the union of an elementary
chain, and so $W$ is an elementary extension of $M_1[G_1]$,
which satisfies all sentences forced by $\P$ over
$V_\delta$ and includes $H_{\delta^\plus}^M$. In
particular, $V_\delta\of W$ and so $V_\delta^W=V_\delta$,
completing the proof.
\end{proof}
\begin{theorem}\label{Theorem.InnerModelSC+V=HOD}
If $\kappa$ is indestructibly $\ltdelta$-supercompact in a
weakly amenable $\<M,U,\delta>$ with $M^{<\delta}\subseteq
M$, then there is an inner model $W$ satisfying $V=\HOD$ in
which $\kappa$ is indestructibly supercompact and for which
$V_\delta^W=V_\delta$.
\end{theorem}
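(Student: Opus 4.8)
The goal is to produce an inner model $W$ satisfying $V=\HOD$ in which $\kappa$ is indestructibly supercompact and $V_\delta^W = V_\delta$. The strategy is to apply Theorem~\ref{Theorem.UsefulSuperfriendly} to a carefully chosen poset $\P$, then to transfer indestructible supercompactness from the image cardinal down to $\kappa$ itself using the iteration trick from the second part of the proof of Theorem~\ref{Theorem.InnerModelSCiterable}. First I would fix the poset: let $\P \subseteq V_\delta$ be the forcing that performs a (superfriendly) Laver preparation of $\kappa$ followed by a class-length coding forcing that makes $V=\HOD$ while preserving supercompactness and being compatible with the $\leqgamma$-strategic closure demands. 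The coding method cited in Theorem~\ref{Theorem.InnerModelSC+GCH} — the $\Diamond^*_\gamma$-style coding combined with the Reitz coding into the continuum function — is the natural candidate, since it is known to force $V=\HOD$ while preserving supercompactness and has the requisite closure and $\delta$-c.c.\ properties inside $M$.

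**Verifying the hypotheses of Theorem~\ref{Theorem.UsefulSuperfriendly}.** I would then check that this $\P$ meets the superfriendly-usefulness hypothesis: for every $\gamma<\delta$ there is a condition $p\in\P$ below which $\P\restrict p$ is $\leqgamma$-strategically closed and useful for the ultrapower by $U$. The superfriendliness comes from allowing the Laver preparation to delay its start past $\gamma$ (exactly as in Theorem~\ref{Theorem.InnerModelIndesctructibleSC} and the discussion following Theorem~\ref{Theorem.IndestructibleSCrobust}), so that below the appropriate condition the forcing is $\leqgamma$-strategically closed. Usefulness requires that $\P$ is $\delta$-c.c.\ in $M$ and that $j_U(\P) \cong \P * \Ptail$ with the tail forced $\leqdelta$-strategically closed; this holds because the coding forcing is definable and $\delta$ is inaccessible (indeed Mahlo) in $M$, so the standard reflection argument gives $\delta$-c.c., and the tail of the coding above $\delta$ is strategically closed by construction. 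Since $M^{\ltdelta}\subseteq M$, the structure is weakly amenable with the required closure, so Theorem~\ref{Theorem.UsefulSuperfriendly} yields an inner model $W_0$ with $V_\delta^{W_0}=V_\delta$ satisfying every sentence forced by $\P$ over $V_\delta$ — in particular, $W_0 \models V=\HOD$ and $W_0$ has an indestructible supercompact cardinal $\kappa_0 = \kappa$ (here $\kappa$ really is the image, since $\kappa<\delta$ is below the critical point of the iteration producing $W_0$, so it is fixed).

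**The main obstacle and its resolution.** The delicate point is whether $\kappa$ \emph{itself} is the supercompact cardinal of the final model, as the statement demands. In Theorem~\ref{Theorem.UsefulSuperfriendly} the inner model $W_0$ already agrees with $V$ up to $\delta$, and since $\kappa<\delta$ sits below the critical point $\delta_1$ of the iteration used to build $W_0$, the cardinal $\kappa$ is not moved: it is genuinely $\kappa$ that is indestructibly supercompact in $W_0$. This is the key advantage of the second proof method of Theorem~\ref{Theorem.UsefulSuperfriendly} (lifting the \emph{second} step and iterating out) over the method of Theorem~\ref{Theorem.InnerModelSCiterable}, where one had to perform the extra internal iteration of a measure on $\kappa_0$ to arrange $j(\kappa_0)=\kappa$. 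Here no such repair is needed, because $V_\delta^{W_0}=V_\delta$ guarantees $\kappa$ is below the action. Thus I expect the hard part to be purely the verification that the chosen $\GCH$-compatible coding forcing simultaneously (i) forces $V=\HOD$, (ii) preserves the indestructible supercompactness of $\kappa$, and (iii) satisfies the superfriendly-usefulness hypothesis with the correct strategic-closure and $\delta$-c.c.\ behavior in $M$; once these are in hand the conclusion $W=W_0$ with $V=\HOD$, indestructibly supercompact $\kappa$, and $V_\delta^W=V_\delta$ follows immediately from Theorem~\ref{Theorem.UsefulSuperfriendly}.
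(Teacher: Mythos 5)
There is a genuine gap, and it sits exactly where you inserted the Laver preparation. Theorem \ref{Theorem.UsefulSuperfriendly} requires that for \emph{every} $\gamma<\delta$ there be a condition $p\in\P$ such that $\P\restrict p$ is $\leqgamma$-strategically closed. Your delayed-start lottery trick produces such conditions only for $\gamma<\kappa$: below any nontrivial condition, the Laver preparation of $\kappa$ still performs nontrivial forcing unboundedly often in some interval $(\gamma_1,\kappa)$, so for $\gamma\in[\kappa,\delta)$ no condition makes $\P\restrict p$ even $\leqkappa$-strategically closed. Thus your poset is only $\ltkappa$-superfriendly, not superfriendly up to $\delta$ as the theorem demands, and the failure is not a mere technicality that a cleverer verification could fix. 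Since $M^{\ltdelta}\of M$ implies $V_\delta^M=V_\delta$, the iterates $M_1$ and $M_2$ contain $V_\delta$ and hence contain every subset of every cardinal below $\kappa$ that exists in $V$. The image $j_{02}(\P)$ still has the (unmoved) Laver preparation of $\kappa$ as its first factor, and a filter in $V$ that is $M_2$-generic for it would have to evaluate some name to a subset of a cardinal $\gamma_1<\kappa$ lying outside $M_2$; but any such evaluated set lies in $P(\gamma_1)^V\of V_\delta\of M_2$, a contradiction. So the diagonalization at the heart of Theorem \ref{Theorem.UsefulSuperfriendly} cannot produce the required generics for your $\P$, no matter how it is arranged.

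This is precisely why the theorem's hypothesis reads \emph{indestructibly} $\ltdelta$-supercompact: indestructibility is assumed, not forced. The paper's poset does no forcing below $\kappa$ at all. It first chooses by lottery an ordinal $\gamma_0\in[\kappa,\delta)$ and then runs an Easton-support iteration on $[\gamma_0,\delta)$ which at each regular $\gamma$ forces with the lottery sum $\oplus\{\Add(\gamma^\plus,1),\Add(\gamma,\gamma^\plusplus)\}$; a density argument shows this codes every set of ordinals below $\delta$ into the \GCH\ pattern, so it forces $V=\HOD$ over $V_\delta$. Because the lottery can start above any $\gamma<\delta$, this poset genuinely satisfies the superfriendliness-plus-usefulness hypothesis up to $\delta$, and because it is $\ltkappa$-directed closed, the \emph{assumed} indestructibility of $\kappa$ is exactly what keeps $\kappa$ indestructibly supercompact after the forcing---no Laver preparation is needed, and as shown above none is possible with this method. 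Your final paragraph is correct and agrees with the paper: since $\kappa<\delta$ is below the critical point of the iteration, it is not moved, so once the correct $\P$ is used, $\kappa$ itself is indestructibly supercompact in the resulting $W$ with $V_\delta^W=V_\delta$; but that observation cannot rescue the choice of forcing that precedes it.
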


\begin{proof}
Let $\<M,\delta,U>$ be weakly amenable with
$M^{<\delta}\subseteq M$. It follows that $\kappa$ is
indestructibly supercompact in $V_\delta$. Let $\P$ be the
forcing notion that first generically chooses (via a
lottery sum) an ordinal $\gamma_0$ in the interval
$[\kappa,\delta)$, and then performs an Easton support
iteration of length $\delta$. $\P$ does nontrivial forcing
at regular cardinals $\gamma$ in the interval
$[\gamma_0,\delta)$, with forcing that either forces the
\GCH\ to hold at $\gamma$ or to fail at $\gamma$, using the
lottery sum $\oplus\{\Add(\gamma^\plus,1),
\Add(\gamma,\gamma^\plusplus)\}$. An easy density argument
(see the proof of \cite[Lemma
13.1]{Friedman2009:Dissertation}) shows that any particular
set of ordinals below $\delta$ added by this forcing will
be coded into the \GCH\ pattern below $\delta$, and so $\P$
forces $V=\HOD$ over $V_\delta$. By indestructibility, the
forcing $\P$ preserves the indestructible supercompactness
of $\kappa$. Furthermore, the forcing $\P$ is definable in
$V_\delta$, and the choice of $\gamma_0$ makes the forcing
as closed as desired below $\delta$, as well as useful for
the ultrapower of $M$ by $U$. Thus, the hypotheses of
Theorem \ref{Theorem.UsefulSuperfriendly} are satisfied. So
by that theorem, there is an inner model $W$ satisfying
$V=\HOD$ and having $V_\delta^W=V_\delta$. Since $\kappa$
is below $\delta$, the critical point of the iteration of
$M$ by $U$, it is not moved by that iteration, and so
$\kappa$ is indestructibly supercompact in $W$.
\end{proof}

Next, we consider a variant of one of the questions
mentioned after Test Question \ref{TestQuestion.SC+HOD^W},
asking the extent to which sets can be placed into the
$\HOD$ of an inner model.

\begin{theorem}\label{Theorem.StrongRamseyHOD^W}
If $\kappa$ is strongly Ramsey, then for any $A\in
H_{\kappa^\plus}$, there is an inner model $W$ containing
$A$ and satisfying $V=\HOD$. If the \GCH\ holds below
$\kappa$, then one can arrange that $A$ is definable in $W$
without parameters.
\end{theorem}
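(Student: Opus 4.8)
The plan is to recognize that a strongly Ramsey cardinal $\kappa$ furnishes, by definition, for each $A\subseteq\kappa$ a $\kappa$-model $M$ with $A\in M$ and a $\kappa$-powerset preserving embedding $j:M\to N$. Since $\kappa$-powerset preservation says precisely that $M$ and $N$ have the same subsets of $\kappa$, the derived measure $U=\set{X\subseteq\kappa\mid X\in M,\ \kappa\in j(X)}$ is a weakly amenable $M$-ultrafilter. The first thing I would do is record that the structure $\<M,\kappa,U>$ is iterable: since $M^{<\kappa}\subseteq M$, the model $M$ is correct about the countable completeness of $U$, which by the standard result (as cited in the excerpt via \cite{kunen:ultrapowers}) suffices for all iterated ultrapowers to be well-founded. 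Thus $\kappa$ plays the role of $\delta$, and we have a weakly amenable iterable structure with which to run the third proof method.

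Next I would produce the coding forcing. Given $A\in H_{\kappa^\plus}$, fix $\hat A\subseteq\kappa$ coding $A$ in a canonical way (possible since $|A|^{<\kappa}$-much information fits into $\kappa$); note $\hat A\in M$ by enlarging the $\kappa$-model if necessary. Let $\P\subseteq V_\kappa^M=V_\kappa$ be the Easton-support iteration that codes sets of ordinals below $\kappa$ into the \GCH\ pattern, exactly as in the proof of Theorem \ref{Theorem.InnerModelSC+V=HOD}, arranged so that $\hat A$ is among the coded sets and so that $\P$ forces $V=\HOD$ over $V_\kappa$. The essential point is that $\P$ is $\kappa$-c.c.\ in $M$ (as $\kappa$ is inaccessible in $M$) and $j(\P)\cong\P*\Ptail$ with $\Ptail$ forced to be ${\leq}\kappa$-strategically closed, so $\P$ is useful for the ultrapower of $M$ by $U$.

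The construction then follows Theorem \ref{th:forcingiterable} verbatim with $\delta$ replaced by $\kappa$: the structure $\<M,\kappa,U>$ is already countable-or-small enough after collapsing, but in fact here I would first pass to a countable elementary substructure $\<M_0,\kappa_0,U_0>$ containing $\P$ and $\hat A$, which remains iterable by Lemma \ref{le:substructure} and over whose collapse $\Q$ of $\P$ the image $\hat A_0$ is still coded. By Corollary \ref{Corollary.CountableIterationOfLift} there is an $M_0$-generic $G_0\of\Q$ and an $M_0[G_0]$-ultrafilter $U_0^*$ extending $U_0$ so that $\<M_0[G_0],\kappa_0,U_0^*>$ is iterable and its iteration step-by-step lifts that of $M_0$. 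Iterating out of the universe yields $W=\bar W[H]$ where $\bar W=\bigcup_\xi j_{0\xi}(V_{\kappa_0}^{M_0})$ and $H$ is the induced $\bar W$-generic for the class forcing $\bigcup_\xi j_{0\xi}(\Q)$. By elementarity $W$ satisfies $V=\HOD$ and contains (a copy of) $A$, since $\hat A_0$ is coded into its \GCH\ pattern and hence is an element of $W$, whence $A\in W$. For the parameter-free definability when the \GCH\ holds below $\kappa$, I would arrange as in Theorem \ref{Theorem.HOD^W} that the coding begins at a definable ordinal—namely the place up to which the \GCH\ holds, which by elementarity is a first-order feature of $W$—so that $\hat A$, and therefore $A$, is the first coded set and thus definable without parameters.

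The main obstacle I anticipate is not in the large-cardinal machinery but in ensuring that $A$ genuinely lands in $W$ rather than merely a collapsed image of it: because $\kappa_0$ is countable in $V$, the object literally reconstructed inside $W$ is $j_{0\kappa}$-shifted, so I must verify that the coded set $\hat A$ is preserved under the Mostowski collapse and then faithfully carried along the iteration, using that the coding is into the continuum-function pattern below the critical point $\kappa_0$ and hence fixed by the iteration. This is the same subtlety handled in Theorem \ref{Theorem.InnerModelSCiterable}, and I would resolve it by noting that $\hat A_0$ sits below $\kappa_0$, so $j_{0\xi}(\hat A_0)$ restricted to the relevant block agrees with $\hat A_0$, and the decoding in $W$ recovers exactly $A$.
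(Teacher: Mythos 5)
You have the right ingredients---the strongly Ramsey hypothesis yields a weakly amenable iterable structure $\<M,\kappa,U>$ with $M^{<\kappa}\of M$ and $A\in M$, and the coding forcing from Theorem \ref{Theorem.InnerModelSC+V=HOD} is the correct poset---but at the decisive step you route the argument through Theorem \ref{th:forcingiterable} and Corollary \ref{Corollary.CountableIterationOfLift}, i.e.\ you collapse to a countable elementary substructure $\<M_0,\kappa_0,U_0>$ before forcing and iterating. That method can only realize \emph{theories} in an inner model (sentences are preserved by elementarity), never a specific set: the Mostowski collapse replaces $\hat A$ by $\hat A_0\of\kappa_0$, a countable set of ordinals that is an entirely different set from $\hat A$, and this damage occurs before the iteration even begins. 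Your proposed repair---that $\hat A_0$ sits below the critical point $\kappa_0$, is fixed by the iteration, and that ``decoding in $W$ recovers exactly $A$''---does not work: what the iteration carries along and what $W$'s \GCH\ pattern decodes is $j_{0\xi}(\hat A_0)$ (respectively $\hat A_0$ itself), not $\hat A$. An iterated ultrapower image $j_{0\kappa}(\hat A_0)$ is completely determined by $\hat A_0$ and the iteration data, so there is no freedom to make it equal the essentially arbitrary set $\hat A$; and the device in Theorem \ref{Theorem.InnerModelSCiterable} that you invoke moves only the \emph{ordinal} $\kappa_0$ up to $\kappa$ along the critical sequence---it cannot recreate a prescribed subset of $\kappa$. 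So your $W$ contains only ``a copy of'' $A$ (as your own parenthetical concedes), which does not prove the theorem.

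The fix is to avoid the countable collapse entirely, and this is exactly what the strongly Ramsey hypothesis is for: since $M$ is a $\kappa$-model, $M^{<\kappa}\of M$, so the hypotheses of Theorem \ref{Theorem.UsefulSuperfriendly} are met with $\delta=\kappa$. That theorem works with the full-size structure: one lifts the \emph{second} step $j_{12}$ of the iteration by diagonalizing in $V$ against the $\kappa$-many dense subsets of $j_{02}(\P)$ in $M_2$ (using $M_2^{<\kappa}\of M_2$ and the $\leqgamma$-strategic closure of $\P\restrict p$ below a suitably chosen condition, together with usefulness to handle the tail $\Ptail$ and the external dense sets $D_a$ of Theorem \ref{th:forcing}), and then iterates the lifted embedding out of the universe. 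The resulting inner model $W$ satisfies $V_\kappa^W=V_\kappa$ and, being an elementary extension of $M_1[G_1]$, contains $H_{\kappa^\plus}^M$, hence contains the actual $A$; it satisfies $V=\HOD$ because that sentence is forced by $\P$ over $V_\kappa$. The definability clause then goes through as you describe, by starting the coding with $\hat A$ at a location definable from the \GCH\ pattern.
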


\begin{proof}
From our earlier discussion, we know that $\kappa$ is
strongly Ramsey if every $A\in H_{\kappa^\plus}$ can be
placed into a weakly amenable structure $\<M,\kappa,U>$
with $M^{<\kappa}\subseteq M$.

Starting with a weakly amenable $\<M,\kappa, U>$ with
$M^{<\kappa}\subseteq M$ and $A\in M$, we use the same
forcing as in the proof of Theorem
\ref{Theorem.InnerModelSC+V=HOD} and appeal to Theorem
\ref{Theorem.UsefulSuperfriendly} to obtain an inner model
$W$ satisfying $V=\HOD$ and having $A\in W$, as desired.

Lastly, if the \GCH\ holds below $\kappa$, then as in
Theorem \ref{Theorem.HOD^W}, we may arrange the coding to
begin with coding $A$, and thereby make $A$ definable in
$W$ without parameters.
\end{proof}

\begin{corollary}\label{Corollary.StrongRamseysHOD^W}
If there is a proper class of strongly Ramsey cardinals,
then every set $A$ is an element of some inner model $W$
satisfying $V=\HOD$.
\end{corollary}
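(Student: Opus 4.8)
The plan is to derive this corollary immediately from Theorem \ref{Theorem.StrongRamseyHOD^W} by selecting a strongly Ramsey cardinal that is large enough to capture the given set. First I would fix an arbitrary set $A$ and let $\lambda$ be the cardinality of the transitive closure of $A$, so that $A\in H_{\lambda^\plus}$. The hypothesis furnishes a proper class of strongly Ramsey cardinals, and in particular they are unbounded in the ordinals, so I may choose a strongly Ramsey cardinal $\kappa$ with $\kappa>\lambda$. Since then $\kappa\geq\lambda^\plus$, the transitive closure of $A$ has size at most $\kappa$, and hence $A\in H_{\kappa^\plus}$.

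With such a $\kappa$ fixed, I would apply Theorem \ref{Theorem.StrongRamseyHOD^W} directly at $\kappa$: because $\kappa$ is strongly Ramsey and $A\in H_{\kappa^\plus}$, that theorem produces an inner model $W$ with $A\in W$ and satisfying $V=\HOD$, which is exactly the conclusion sought. This completes the argument.

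There is essentially no obstacle in this deduction; the corollary is a direct application of the preceding theorem. The only role played by the proper class assumption, as opposed to the existence of a single strongly Ramsey cardinal, is that the bound $\lambda$ on the transitive closure of $A$ varies with $A$, so one needs strongly Ramsey cardinals arbitrarily high in the ordinals in order to ensure $A\in H_{\kappa^\plus}$ for some such $\kappa$ for \emph{every} set $A$. Given that unboundedness, the choice of $\kappa$ and the invocation of Theorem \ref{Theorem.StrongRamseyHOD^W} are immediate.
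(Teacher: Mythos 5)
Your proposal is correct and follows essentially the same argument as the paper: choose a strongly Ramsey cardinal $\kappa$ above the size of the transitive closure of $A$, so that $A\in H_{\kappa^\plus}$, then apply Theorem \ref{Theorem.StrongRamseyHOD^W}. Your writeup merely spells out in more detail why such a $\kappa$ exists, which the paper leaves implicit.
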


\begin{proof}
Under this hypothesis, every set $A$ is in
$H_{\delta^\plus}$ for some strongly Ramsey cardinal
$\delta$, and so is in an inner model $W$ satisfying
$V=\HOD$ by Theorem \ref{Theorem.StrongRamseyHOD^W}.
\end{proof}

To summarize the situation with our test questions, we have
provided definite affirmative answers to Test Questions
\ref{TestQuestion.Supercompact+2^kappa=kappa+},
\ref{TestQuestion.Supercompact+2^kappa>kappa+},
\ref{TestQuestion.IndestructibleSC} and
\ref{TestQuestion.SC+HOD^W}, along with several variants,
but have only provided the affirmative conclusion of
Test Questions \ref{TestQuestion.Supercompact+GCH},
\ref{TestQuestion.PFA} and \ref{TestQuestion.SC+V=HOD} from
the (consistency-wise) stronger hypothesis that there is a
cardinal supercompact up to a weakly iterable cardinal (or
at least supercompact inside an iterable structure). We do
not know if this hypothesis can be weakened for these
results to merely a supercompact cardinal. Perhaps either
Woodin's new approach to building non-fine-structural inner
models of a supercompact cardinal, or Foreman's approach of
\cite{For09} for constructing inner models of very large
cardinals, will provide the answers to these questions.

\section{Further Applications}\label{Section.FurtherApplications}

We shall now describe how variants of our methods can be
used to obtain a further variety of inner models. First,
using the methods of Theorem \ref{Theorem.InnerModelSC+GCH}
and a stronger hypothesis, we can obtain:
\begin{theorem}
Suppose that $\delta$ is a weakly iterable cardinal and a
limit of cardinals that are $\ltdelta$-supercompact. Then:
 \begin{enumerate}
  \item There is an inner model with a proper class of
      supercompact cardinals, all Laver indestructible.
  \item There is an inner model with a proper class of
      supercompact cardinals, where the \GCH\ holds.
  \item There is an inner model with a proper class of
      supercompact cardinals, where $V=\HOD$ and the
      Ground Axiom hold.
  \end{enumerate}
\end{theorem}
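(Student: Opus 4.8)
The plan is to adapt the third proof method, specifically Theorem~\ref{Theorem.InnerModelSC+GCH} and its class-forcing engine Theorem~\ref{th:forcingiterable}, to the proper-class-of-supercompacts setting. The key structural observation is that we now have a weakly iterable $\delta$ which is a \emph{limit} of $\ltdelta$-supercompact cardinals, so that inside the iterable structure $\<M,\delta,U>$ witnessing weak iterability (with $V_\delta\in M$), the model $V_\delta^M$ contains a proper class (from its own internal perspective) of supercompact cardinals---namely the $\ltdelta$-supercompact cardinals below $\delta$ become genuinely supercompact in $V_\delta$, since $\delta$ is inaccessible in $M$ and hence $V_\delta\models\ZFC$ with these cardinals supercompact. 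First I would fix, inside $V_\delta^M$, a suitable Easton-support class iteration $\P$ realizing the desired target theory: for (1), a global Laver preparation that makes every supercompact cardinal indestructible (using the Laver-function machinery uniformly across the class, as in the Apter--Gitman style global indestructibility constructions); for (2), the canonical class forcing of the \GCH; and for (3), the $\GCH$-compatible $V=\HOD$ coding from \cite{Brooke-Taylor2009:LargeCardinalsAndWellOrderingOnTheUniverse} combined with \cite[Theorem 11]{Reitz2006:Dissertation}, which additionally secures the Ground Axiom.

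The central point is that each of these forcings is definable over $V_\delta^M$ and is \emph{useful} for the ultrapower by $U$ in the sense required by Theorem~\ref{th:forcingiterable}: since $\delta$ is Mahlo (indeed inaccessible) in $M$, an Easton-support iteration of length $\delta$ is $\delta$-c.c.\ in $M$, and the ultrapower image $j(\P)$ factors as $\P*\Ptail$ with the tail $\leqdelta$-strategically closed. Granting usefulness, Theorem~\ref{th:forcingiterable} applies essentially verbatim to yield an inner model $W$ satisfying the theory forced by $\P$ over $V_\delta^M$. Because that theory includes ``there is a proper class of supercompact cardinals'' together with the target feature (indestructibility, or \GCH, or $V=\HOD$ plus the Ground Axiom), the resulting $W$ has a proper class of supercompact cardinals with the desired property. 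The only subtlety is that ``proper class'' is interpreted internally to $V_\delta^M$ and then transported to $W$ via the construction $W=\bigcup_{\xi\in\Ord}j_{0\xi}(V_{\delta_0}^{M_0[G_0]})$; since this $W$ is an elementary extension built from the iterates, the class of supercompacts of $V_{\delta_0}$ is stretched by the iteration maps $j_{0\xi}$ into a genuine proper class of supercompacts in $W$, each retaining its feature by elementarity.

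I expect the main obstacle to be verifying that the global indestructibility forcing in part (1) is simultaneously (a) able to make \emph{all} the supercompact cardinals indestructible at once and (b) useful for the ultrapower, since a naive product or iteration of individual Laver preparations could fail to be $\delta$-c.c.\ or could add subsets of $\delta$ at the wrong stage and thereby destroy weak amenability---exactly the pathology flagged in the discussion after Theorem~\ref{th:forcing}. The remedy is to use an Easton-support iteration of lottery sums of Laver preparations (the lottery preparation of \cite{Ham00}), which yields global indestructibility with the correct chain condition and factoring, so that $j(\P)\cong\P*\Ptail$ with $\Ptail$ appropriately closed. A secondary point worth checking is that for part (3) the Ground Axiom is preserved through the iteration-of-the-lift: this follows because the coding forcing of \cite{Reitz2006:Dissertation} is designed so that $V=\HOD$ via the continuum pattern forces the Ground Axiom, and this is a first-order feature of the forced theory over $V_\delta^M$, hence transported to $W$ by elementarity exactly as the \GCH\ and $V=\HOD$ are. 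With these chain-condition and preservation verifications in hand, each of (1), (2), (3) follows as an instance of Theorem~\ref{th:forcingiterable}.
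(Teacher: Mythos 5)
Your proposal follows essentially the same route as the paper: the paper's own proof is only a remark saying that in each case the natural class forcing over $V_\delta$ (the global Laver/lottery preparation for (1), the Easton-support \GCH\ forcing for (2), and the coding-into-the-continuum-function forcing for (3)) has the closure properties needed to run the argument of Theorem~\ref{Theorem.InnerModelSC+GCH}, that is, usefulness for the ultrapower by $U$ followed by an application of Theorem~\ref{th:forcingiterable}, with the forced theory (including ``there is a proper class of supercompact cardinals'' plus the target feature) transported to the inner model by elementarity. Your write-up supplies exactly these ingredients, and is in fact more explicit than the paper about the one genuine subtlety, namely arranging that the global indestructibility iteration in (1) is $\delta$-c.c.\ in $M$ with a $\leqdelta$-strategically closed tail so that weak amenability of the lifted ultrafilter survives.
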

Of course, there are numerous other possibilities, for any
of the usual forcing iterations; we mention only these
three as representative. In each case, the natural forcing
has the same closure properties needed to support the
argument of Theorem \ref{Theorem.InnerModelSC+GCH}. In the
case of Statement (2), for example, one uses the canonical
Easton support forcing of the \GCH, and in Statement (3),
one uses any of the usual iterations that force every set
to be coded into the \GCH\ pattern of the continuum
function, a state of affairs that implies both $V=\HOD$ and
the Ground Axiom, the assertion that the universe was not
obtained by set forcing over any inner model (see
\cite{Hamkins2005:TheGroundAxiom},
\cite{Reitz2006:Dissertation} and
\cite{Reitz2007:TheGroundAxiom}).

For the next application of our methods, we show that there
are inner models witnessing versions of classical results
of Magidor \cite{Mag76}.

\begin{theorem} \ \label{Magidor}

\begin{enumerate}

\item\label{mag1} If $\kappa$ is $\ltdelta$-strongly
    compact for a weakly iterable cardinal $\delta$
    above $\kappa$, then there is an inner model in
    which $\kappa$ is both the least strongly compact
    and the least measurable cardinal.

\item\label{mag2} If there is a strongly compact
    cardinal $\kappa$, then there is an inner model in
    which the least strongly compact cardinal has only
    boundedly many measurable cardinals below it.

\item\label{mag3} If there is a supercompact cardinal
    $\kappa$, then for every cardinal $\theta$, there
    is an inner model $W$ in which the least strongly
    compact cardinal is the least supercompact cardinal
    and for which $W^\theta \subseteq W$.

\item\label{mag4} If $\kappa$ is
    $\ltdelta$-supercompact for a weakly iterable
    cardinal $\delta$ above $\kappa$, then there is an
    inner model in which $\kappa$ is both the least
    strongly compact and least supercompact cardinal.

\end{enumerate}
\end{theorem}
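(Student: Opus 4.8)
The plan is to read each of the four parts as the internal-consistency analogue of the corresponding classical forcing theorem of Magidor \cite{Mag76}, and to feed the appropriate Magidor poset into whichever of our three methods matches the hypothesis. In each case the first task is to isolate a poset $\P$ forcing the desired global configuration over a suitable ground model while preserving the relevant large cardinal, and the second is to check that $\P$ carries the closure or friendliness property the chosen method requires. The hypotheses split the four parts into two pairs: parts (\ref{mag1}) and (\ref{mag4}) carry a weak iterability assumption and so route through the iteration method, while parts (\ref{mag2}) and (\ref{mag3}) have only a strongly compact or supercompact cardinal and so route through the Boolean-ultrapower and superfriendly methods.

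For parts (\ref{mag1}) and (\ref{mag4}), fix an iterable $\<M,\delta,U>$ with $V_\delta\in M$ witnessing the weak iterability of $\delta$. The key point is that $\ltdelta$-strong compactness (respectively $\ltdelta$-supercompactness) of $\kappa$ makes $\kappa$ \emph{fully} strongly compact (respectively supercompact) inside $V_\delta^M$, since that structure sees only cardinals below $\delta$. Magidor's iteration destroying the measurability (respectively the strong compactness) of every cardinal below $\kappa$ by threading in non-reflecting stationary sets, while preserving the relevant property of $\kappa$, is a set poset of size $\kappa<\delta$ and hence lies in $V_\delta^M$; over $V_\delta^M$ it forces that $\kappa$ is simultaneously the least measurable (respectively least supercompact) and the least strongly compact cardinal. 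Theorem \ref{Theorem.Iterable} then produces an inner model $W_0$ in which the collapsed cardinal $\kappa_0$ has exactly this property. Since $\kappa_0$ is in particular measurable in $W_0$, I would finish precisely as in the second half of Theorem \ref{Theorem.InnerModelSCiterable}: iterate a normal measure on $\kappa_0$ along its critical sequence, which sweeps through all $V$-cardinals, and stop at the stage whose image is $\kappa$ itself, so that elementarity transports the configuration to $\kappa$.

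For parts (\ref{mag2}) and (\ref{mag3}), I would instead render the relevant Magidor iteration $\ltkappa$-friendly, respectively $\ltkappa$-superfriendly, by prefixing a lottery that selects an ordinal $\gamma_0<\kappa$ at which the destruction is allowed to begin, exactly as the Laver preparation was made friendly in Theorem \ref{Theorem.InnerModelIndesctructibleSC}; below $\gamma_0$ the forcing is trivial and hence as closed as needed, and each stage-$\gamma$ destruction is arranged as $\leqgamma$-strategically closed forcing adding non-reflecting stationary subsets of $\gamma^\plus$. Taking $\kappa$ to be the least strongly compact cardinal of $V$ in (\ref{mag2}) and applying Theorem \ref{Theorem.FriendlyStrC}, one obtains an inner model $W$ in which $\kappa$ is again the least strongly compact cardinal but its measurable cardinals are confined to the block below $\gamma_0$, which is precisely the ``only boundedly many measurable cardinals below it'' of (\ref{mag2}). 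For (\ref{mag3}) one runs the analogous superfriendly destruction of strong compactness through Theorem \ref{Theorem.FriendlySC}, which additionally secures $W^\theta\of W$ because every $\theta$-sequence of $V$ already lies in the ultrapower $M$, and preservation of supercompactness under the closed tail forcing is handled by \cite{LS67}.

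The main obstacle is exactly the tension between global minimality and friendliness that separates the pairs. In (\ref{mag1}) and (\ref{mag4}) there is none: the iteration method forces over the \emph{set} $V_\delta^M$ and then iterates a countable structure out of the universe, so no closure of $\P$ is needed, the full ``kill everything below $\kappa$'' iteration is admissible, and the sharp ``least $=$ least'' conclusion drops out. The friendly methods, by contrast, cannot act on an unbounded set of small cardinals, which is exactly why (\ref{mag2}) only guarantees that the measurables below the least strongly compact are bounded rather than absent. The genuinely delicate point is (\ref{mag3}), where the conclusion is the \emph{sharp} coincidence of the least strongly compact and least supercompact cardinals under the superfriendly constraint: one must ensure that no strongly compact cardinal survives below the image cardinal in $W$, yet the lottery leaves an untouched initial block. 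I expect this bookkeeping to require the most care, and I would address it by taking $\kappa$ least supercompact, pushing the lottery's starting point down as far as the friendliness permits (as in the targeted Boolean ultrapower of Theorem \ref{Theorem.HOD^W}), and verifying as in \cite{Mag76} that the cardinals surviving in the residual block are not strongly compact in $W$.
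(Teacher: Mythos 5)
Your overall architecture coincides with the paper's: parts (\ref{mag1}) and (\ref{mag4}) are routed through Theorem \ref{Theorem.Iterable} followed by the normal-measure iteration trick from the second half of Theorem \ref{Theorem.InnerModelSCiterable} so that $\kappa$ itself becomes the witness, while parts (\ref{mag2}) and (\ref{mag3}) are routed through lottery-style friendly and superfriendly preparations and Theorems \ref{Theorem.FriendlyStrC} and \ref{Theorem.FriendlySC}; your explanation of why (\ref{mag2}) yields only boundedness of the measurables below is also the paper's own. The genuine gap is your choice of forcing in parts (\ref{mag1}) and (\ref{mag2}). You propose to destroy the measurable cardinals below $\kappa$ by adding non-reflecting stationary sets and to quote ``Magidor's arguments'' for the preservation of the strong compactness of $\kappa$; but Magidor's theorem is proved with an iteration of \emph{Prikry} forcings, and the difference is not cosmetic. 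Preservation of strong compactness must be verified by transferring generics along a strong compactness embedding $j:V\to M$, and such an $M$ has essentially no closure inside $V$; Prikry-style iterations are precisely the forcings whose $M$-generics for the image forcing can nonetheless be produced (Magidor builds them from the measures themselves), whereas the image of a non-reflecting stationary-set iteration has strategically closed stages at the $M$-measurable cardinals in $[\kappa,j(\kappa))$, and without closure of $M$ there is no way to diagonalize to meet $M$'s dense sets there. (Also, adding a non-reflecting stationary subset of $\gamma^\plus$, as you write, would not even destroy the measurability of $\gamma$; the stationary set must sit inside $\gamma$ itself.) The paper accordingly uses Magidor's iterated Prikry forcing for (\ref{mag1}), and for (\ref{mag2}) the lottery sum $\oplus\{\P_\gamma\mid\gamma<\kappa\}$ of the Prikry iterations on the intervals $(\gamma,\kappa)$, which is $\ltkappa$-friendly because Prikry forcing adds no bounded subsets, but is certainly \emph{not} superfriendly; this is exactly why (\ref{mag2}), unlike (\ref{mag3}), goes only through Theorem \ref{Theorem.FriendlyStrC} and carries no $W^\theta\of W$ conclusion. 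That your version of (\ref{mag2}) comes out strategically closed, hence superfriendly, is itself a sign that the forcing has been mischosen.

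For parts (\ref{mag3}) and (\ref{mag4}), where non-reflecting stationary sets are the right tool, your description ``destroy the strong compactness of every cardinal below $\kappa$'' omits the device on which everything rests: the paper's $\P_\gamma$, following \cite{Apt05}, adds non-reflecting stationary sets only at the \emph{non-measurable} regular limits of strong cardinals in $(\gamma,\kappa)$. Restricting to non-measurable stages is what makes the preservation of $\kappa$'s supercompactness provable (in the image forcing there is no stage at $\kappa$, since $\kappa$ is measurable there), and some such care is forced by the conclusion itself: a supercompact $\kappa$ is a stationary limit of measurables, so one cannot simply sweep away all large cardinals below $\kappa$; one must instead block full strong compactness below $\kappa$ via the stationary-reflection property of strongly compact cardinals, while leaving measurables intact. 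Relatedly, your appeal to \cite{LS67} for preservation of supercompactness in (\ref{mag3}) is misplaced---$\P_\gamma$ has size $\kappa$, not size less than $\kappa$, so Levy--Solovay does not apply and the preservation is part of Apter's argument---and your closing appeal to \cite{Mag76} for verifying that no cardinal in the residual block is strongly compact is again citing the wrong source; the relevant arguments are those of \cite{Apt05}.
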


\begin{proof}
For (\ref{mag1}), let ${\mathbb P}$ be Magidor's notion of
iterated Prikry forcing from \cite{Mag76}, which adds a
Prikry sequence to every measurable cardinal below
$\kappa$. Since $|\P|=\kappa$, it is small with respect to
$\delta$. By the arguments of \cite{Mag76}, the cardinal
$\kappa$ becomes both the least strongly compact and the
least measurable cardinal in $V^\P_\delta$. Thus, by
Theorem \ref{Theorem.Iterable}, there is an inner model in
which the least strongly compact cardinal is the least
measurable cardinal, and by the methods from the second
part of the proof of Theorem
\ref{Theorem.InnerModelSCiterable}, this cardinal may be
taken as $\kappa$ itself.

For (\ref{mag2}), we begin by noting that the partial
ordering ${\mathbb P}$ mentioned in the preceding paragraph
is not ${<}\kappa$-friendly. However, in analogy to the
first proof given for Theorem
\ref{Theorem.InnerModelIndesctructibleSC}, for every
$\gamma < \kappa$, let ${\mathbb P}_\gamma$ be Magidor's
notion of iterated Prikry forcing from \cite{Mag76} which
adds a Prikry sequence to every measurable cardinal in the
open interval $(\gamma, \kappa)$. By \cite{Mag76}, forcing
with ${\mathbb P}_\gamma$ adds no subsets to $\gamma$. Let
${\mathbb P}^* = \oplus \{{\mathbb P}_\gamma \mid \gamma <
\kappa\}$ be their lottery sum. Magidor's arguments of
\cite{Mag76} together with ${\mathbb P}^*$'s definition as
a lottery sum show that after forcing with ${\mathbb P}^*$,
for some $\gamma < \kappa$, $\kappa$ is the least strongly
compact cardinal, and there are no measurable cardinals in
the open interval $(\gamma, \kappa)$. Since ${\mathbb P}^*$
is ${<}\kappa$-friendly, (\ref{mag2}) now follows by
Theorem \ref{Theorem.FriendlyStrC}.

For (\ref{mag3}), let $\gamma < \kappa$, and let ${\mathbb
P}_\gamma$ be the Easton support iteration of length
$\kappa$ which adds a non-reflecting stationary set of
ordinals of cofinality $\omega$ to every non-measurable
regular limit of strong cardinals in the open interval
$(\gamma, \kappa)$. (In other words, ${\mathbb P}_\gamma$
does trivial forcing except at those $\delta \in (\gamma,
\kappa)$ which are non-measurable regular limits of strong
cardinals, where it adds a non-reflecting stationary set of
ordinals of cofinality $\omega$ to $\delta$.) By the
remarks in \cite[Section 2]{Apt05}, after forcing with
${\mathbb P}_\gamma$, $\kappa$ becomes both the least
strongly compact and least supercompact cardinal. Let
${\mathbb P} = \oplus \{{\mathbb P}_\gamma \mid \gamma <
\kappa\}$ be their lottery sum. Since ${\mathbb P}$ is
${<}\kappa$-superfriendly, (\ref{mag3}) now follows by
Theorem \ref{Theorem.FriendlySC}.

Finally, for (\ref{mag4}), we note that for any $\gamma <
\kappa$, ${\mathbb P}_\gamma$ of the preceding paragraph is
${<}\kappa$-friendly, since it is
${<}\kappa$-superfriendly. Because $|{\mathbb P}_\gamma| =
\kappa$, (\ref{mag4}) now follows by Theorem
\ref{Theorem.Iterable} and the methods from the second part
of the proof of Theorem \ref{Theorem.InnerModelSCiterable}.
\end{proof}

Before stating our next application, we briefly recall some
definitions. Say that a model $V$ of \ZFC${}$ containing
supercompact cardinals satisfies {\em level by level
equivalence between strong compactness and
supercompactness} if for every $\kappa < \lambda$ regular
cardinals, $\kappa$ is $\lambda$ strongly compact if and
only if $\kappa$ is $\lambda$ supercompact. Say that a
model $V$ of \ZFC${}$ containing supercompact cardinals
satisfies {\em level by level inequivalence between strong
compactness and supercompactness} if for every
non-supercompact measurable cardinal $\kappa$, there is
some $\lambda > \kappa$ such that $\kappa$ is $\lambda$
strongly compact yet $\kappa$ is not $\lambda$
supercompact. Models satisfying level by level equivalence
between strong compactness and supercompactness were first
constructed in \cite{ApSh97}, and models satisfying level
by level inequivalence between strong compactness and
supercompactness have been constructed in \cite{Apt02},
\cite{Apt10} and \cite{apter:lbli}.

\begin{theorem}\ \label{Apter}
\begin{enumerate}
\item\label{apt1} If the \GCH${}$ holds and $\kappa$ is
    $\ltdelta$-supercompact for a weakly iterable
    cardinal $\delta$ above $\kappa$, then there is an
    inner model in which $\kappa$ is supercompact and
    the \GCH${}$ and level by level equivalence between
    strong compactness and supercompactness hold.
\item\label{apt2} If the \GCH${}$ holds and $\kappa$ is
    $\delta$-supercompact for a weakly iterable
    cardinal $\delta$ above $\kappa$, then there is an
    inner model in which $\kappa$ is supercompact and
    the \GCH${}$ and level by level inequivalence
    between strong compactness and supercompactness
    hold.
\end{enumerate}
\end{theorem}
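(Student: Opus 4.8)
The plan is to run the third proof method exactly as in the proof of Theorem \ref{Theorem.InnerModelSC+GCH}, with the canonical \GCH-forcing replaced by the canonical level-by-level forcing from the relevant references. For either statement I would fix an iterable structure $\<M,\delta,U>$ witnessing the weak iterability of $\delta$ with $V_\delta\in M$, making the same harmless normalizing assumptions on $M$ as in Theorem \ref{th:forcingiterable}. Since \GCH\ holds in $V$ and $\delta$ is a $\beth$-fixed point, \GCH\ holds in $V_\delta$, and by hypothesis $\kappa$ is $\ltdelta$-supercompact (respectively $\delta$-supercompact) in $M$. The task then reduces to exhibiting over $V_\delta$ a poset $\P$, definable in $V_\delta$ and useful for the ultrapower of $M$ by $U$, which forces \GCH\ together with the desired level-by-level property while preserving the supercompactness of $\kappa$.

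For Statement (\ref{apt1}) I would take $\P$ to be the Easton-support iteration of \cite{ApSh97}, which over $V_\delta$ forces level by level equivalence between strong compactness and supercompactness while maintaining \GCH\ and the supercompactness of $\kappa$. Being an Easton-support iteration of length $\delta$ with $\delta$ inaccessible, indeed Mahlo, in $M$, this $\P$ is $\delta$-c.c.\ in $M$; and writing $j_U:M\to N$ for the ultrapower by $U$, with $\cp(j_U)=\delta$, the image factors as $j_U(\P)\cong\P*\Ptail$ with $\Ptail$ forced to be $\leqdelta$-strategically closed. Hence $\P$ is useful for the ultrapower by $U$, and Theorem \ref{th:forcingiterable} produces an inner model $W_0$ satisfying \GCH\ and level by level equivalence and containing a supercompact cardinal $\kappa_0$. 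For Statement (\ref{apt2}) I would instead let $\P$ be the corresponding forcing of \cite{Apt02}, \cite{Apt10}, or \cite{apter:lbli} that forces level by level inequivalence together with \GCH; one checks in the same way that, suitably arranged, it is a $V_\delta$-definable, $\delta$-c.c., useful iteration preserving the supercompactness of $\kappa$, so that Theorem \ref{th:forcingiterable} again yields an inner model $W_0$ with a supercompact cardinal $\kappa_0$, now satisfying \GCH\ and level by level inequivalence.

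It remains to arrange that $\kappa$ itself, rather than the collapsed $\kappa_0$, is the supercompact cardinal of the inner model. For this I would follow the second part of the proof of Theorem \ref{Theorem.InnerModelSCiterable}: since $\kappa_0$ is measurable in $W_0$, iterate a normal measure on $\kappa_0$ through all the ordinals, obtaining a closed unbounded critical sequence that contains every cardinal of $V$; then $\kappa$ occurs on it as $\kappa=\kappa_\kappa$, and the $\kappa^\th$ iterate $j:W_0\to W_\kappa$ satisfies $j(\kappa_0)=\kappa$. Because \GCH\ and level by level equivalence (respectively inequivalence) between strong compactness and supercompactness are each expressible by a single first-order sentence, they transfer from $W_0$ to $W_\kappa$ by elementarity, while $\kappa=j(\kappa_0)$ is supercompact in $W_\kappa$. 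Thus $W_\kappa$ is the desired inner model.

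The step I expect to be the main obstacle is verifying, for each of the two families of constructions, that the level-by-level forcing can be arranged to be useful for the ultrapower of $M$ by $U$ --- that is, $\delta$-c.c.\ in $M$ with a $\leqdelta$-strategically closed tail factor --- while still completing all of its work below $\delta$ and preserving the supercompactness of $\kappa$. For the equivalence forcing of \cite{ApSh97} this is a routine adaptation of its Easton-support design, but the inequivalence constructions are more intricate, and it is precisely there that preserving supercompactness alongside the usefulness requirement consumes the stronger hypothesis that $\kappa$ be $\delta$-supercompact rather than merely $\ltdelta$-supercompact.
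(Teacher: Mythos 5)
Your treatment of Statement (\ref{apt1}) follows the same architecture as the paper's: the \cite{ApSh97} forcing defined over $V_\delta$, Theorem \ref{th:forcingiterable}, and then the iteration trick from the second part of the proof of Theorem \ref{Theorem.InnerModelSCiterable} to realize $\kappa$ itself as the supercompact cardinal of the inner model. But the step you dismiss as ``a routine adaptation of its Easton-support design'' is precisely where the paper needs an additional idea. The $\delta$-c.c.\ half of usefulness is indeed routine, since $\delta$ is Mahlo in $M$; the delicate half is that $j(\P)$ must factor as $\P*\Ptail$ with $\Ptail$ forced to be ${\leq}\delta$-strategically closed, where $j:M\to N$ is the ultrapower by $U$. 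Whether this holds depends on where the (rather complicated) \cite{ApSh97} iteration has nontrivial stages in $N$ in the interval $[\delta,j(\delta))$, which in turn depends on the large-cardinal structure of $N$ there --- something your hypotheses alone do not control. The paper handles this by assuming, harmlessly, that $\kappa$ and $\delta$ are chosen \emph{least} such that $\kappa$ is $\ltdelta$-supercompact and $\delta$ is weakly iterable (harmless because the final iteration trick can place any desired cardinal of $V$, in particular the original $\kappa$, as the supercompact of the inner model); this minimality, transferred into the ultrapower by elementarity, is what guarantees that the tail of $j(\P)$ is suitably closed, hence that $\P$ is useful. Your proposal supplies no argument at this point.

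For Statement (\ref{apt2}) your route genuinely diverges from the paper's, and the gap is more serious. You propose to realize level by level inequivalence by a $V_\delta$-definable, class-length (over $V_\delta$), useful forcing preserving the supercompactness of $\kappa$, and then to invoke Theorem \ref{th:forcingiterable}; but none of \cite{Apt02}, \cite{Apt10}, \cite{apter:lbli} provides such a forcing, and you give no reason one should exist --- you only concede the constructions are ``more intricate.'' The paper avoids this entirely by a reflection argument: it first notes the general fact that if $j:V\to M$ witnesses the $\delta$-supercompactness of $\kappa$ and $\delta$ is weakly iterable, then $M\models``\delta$ is weakly iterable''; consequently, by the proof of \cite[Theorem 2]{Apt10}, there are cardinals $\kappa_0<\delta_0<\kappa$ and a \emph{set} poset $\P\in V_{\delta_0}$ such that $V_{\delta_0}^\P$ satisfies the entire desired theory. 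One then applies the simpler set-forcing Theorem \ref{Theorem.Iterable} --- which requires no usefulness hypothesis at all --- and finishes with the same iteration trick. This also corrects your diagnosis of where the stronger hypothesis is consumed: full $\delta$-supercompactness (as opposed to $\ltdelta$-supercompactness) is used not to reconcile preservation of supercompactness with usefulness, but to push the weak iterability of $\delta$ into the target model $M$ so that the whole configuration, together with the forcing witnessing it, reflects below $\kappa$.
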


\begin{proof}
For (\ref{apt1}), assume that $\kappa$ and $\delta$ are
least such that $\kappa$ is $\ltdelta$-supercompact and
$\delta$ is a weakly iterable cardinal. Let $\P$ be the
class forcing from \cite{ApSh97} defined over $V_\delta$
such that $V^\P_\delta \satisfies ``\kappa$ is
supercompact, and the \GCH${}$ and level by level
equivalence between strong compactness and supercompactness
hold''. We refer readers to \cite[Section 3]{ApSh97} for
the exact definition of $\P$, which is rather complicated.
We do note, however, that if $\langle M, \delta, U \rangle$
is an iterable structure containing $V_\delta$, then
because $\kappa$ and $\delta$ are least such that $\kappa$
is ${<}\delta$-supercompact and $\delta$ is weakly
iterable, $\P$ is useful for the ultrapower embedding.
Therefore, by Theorem \ref{th:forcingiterable}, there is an
inner model containing a supercompact cardinal in which the
\GCH${}$ and level by level equivalence between strong
compactness and supercompactness hold, and by the methods
from the second part of the proof of Theorem
\ref{Theorem.InnerModelSCiterable}, this cardinal may be
taken as $\kappa$ itself.

For (\ref{apt2}), assume that $\kappa$ and $\delta$ are
least such that $\kappa$ is $\delta$-supercompact and
$\delta$ is a weakly iterable cardinal. It is a general
fact that if $\gamma$ is $\rho$-supercompact, $\rho$ is
weakly iterable, and $j : V \to M$ is an elementary
embedding witnessing the $\rho$-supercompactness of
$\gamma$, then $M \models ``\rho$ is weakly iterable''.
Therefore, by the proof of \cite[Theorem 2]{Apt10}, there
are cardinals $\kappa_0 < \delta_0 < \kappa$ and a partial
ordering $\P \in V_{\delta_0}$ such that $V^\P_{\delta_0}
\satisfies ``\kappa_0$ is supercompact, and the \GCH${}$
and level by level inequivalence between strong compactness
and supercompactness hold''. Thus, by Theorem
\ref{Theorem.Iterable}, there is an inner model containing
a supercompact cardinal in which the \GCH${}$ and level by
level inequivalence between strong compactness and
supercompactness hold, and by the methods from the second
part of the proof of Theorem
\ref{Theorem.InnerModelSCiterable}, this cardinal may be
taken as $\kappa$ itself.
\end{proof}

We mentioned at the opening of this article that we take
our test questions as representative of the many more
similar questions one could ask, inquiring about the
existence of inner models realizing various large cardinal
properties usually obtained by forcing. We would similarly
like to take our answers---and in particular, the three
proof methods we have described---as providing a key to
answering many of them. Indeed, we encourage the reader to
go ahead and formulate similar interesting questions and
see if these methods are able to provide an answer.
Going forward, we are especially keen to find or learn of
generalizations of our first two methods, in Theorems
\ref{Theorem.FriendlyStrC}, \ref{Theorem.FriendlyStrCemb}
and \ref{Theorem.FriendlySC}, which might allow us to find
the more robust inner models provided by these methods for
a greater variety of situations.

\bibliographystyle{alpha}
\bibliography{Innermodels}

\end{document}